\numberwithin{equation}{section}
\newtheorem{theorem}[equation]{Theorem}
\newtheorem*{theorem*}{Theorem} \newtheorem{lemma}[equation]{Lemma}
\newtheorem*{conjecture*}{Mamma Conjecture}
\newtheorem*{conjecture1*}{Mamma Conjecture (revisited)}
\newtheorem{proposition}[equation]{Proposition}
\newtheorem{corollary}[equation]{Corollary}
\newtheorem*{corollary*}{Corollary}
\theoremstyle{remark}
\newtheorem{definition}[equation]{Definition}
\newtheorem{example}[equation]{Example}
\theoremstyle{remark}
\newtheorem{remark}[equation]{Remark}
\newcommand{\cA}{{\mathcal A}}
\newcommand{\cB}{{\mathcal B}}
\newcommand{\cC}{{\mathcal C}}
\newcommand{\cD}{{\mathcal D}}
\newcommand{\cF}{{\mathcal F}}
\newcommand{\cG}{{\mathcal G}}
\newcommand{\cH}{{\mathcal H}}
\newcommand{\cM}{{\mathcal M}}
\newcommand{\cO}{{\mathcal O}}
\newcommand{\cP}{{\mathcal P}}
\newcommand{\bbA}{\mathbb{A}}
\newcommand{\bbB}{\mathbb{B}}
\newcommand{\bbC}{\mathbb{C}}
\newcommand{\bbG}{\mathbb{G}}
\newcommand{\bbP}{\mathbb{P}}
\newcommand{\bbR}{\mathbb{R}}
\newcommand{\bbT}{\mathbb{T}}
\newcommand{\bbQ}{\mathbb{Q}}
\newcommand{\bbZ}{\mathbb{Z}}
\DeclareMathOperator{\Mod}{Mod}
\DeclareMathOperator{\Mot}{Mot}
\newcommand{\dgcat}{\mathrm{dgcat}} 
\newcommand{\perf}{\mathrm{perf}}
\newcommand{\dg}{\mathrm{dg}}
\newcommand{\uHom}{\underline{\mathrm{Hom}}}
\newcommand{\Hom}{\mathrm{Hom}}
\newcommand{\dgHo}{\mathrm{H}^0}
\newcommand{\Ho}{\mathrm{Ho}}
\newcommand{\Hmo}{\mathrm{Hmo}}
\newcommand{\op}{\mathrm{op}}
\newcommand{\too}{\longrightarrow}
\newcommand{\ie}{\textsl{i.e.}\ }
\newcommand{\eg}{\textsl{e.g.}}
\let\oldmarginpar\marginpar
\def\marginpar#1{\oldmarginpar{\tiny #1}}
\begin{document}

\title[Modified mixed realizations, additive invariants, and periods]{Modified mixed realizations, new additive invariants, and periods of dg categories}
\author{Gon{\c c}alo~Tabuada}

\address{Gon{\c c}alo Tabuada, Department of Mathematics, MIT, Cambridge, MA 02139, USA}
\email{tabuada@math.mit.edu}
\urladdr{http://math.mit.edu/~tabuada}
\thanks{The author was partially supported by a NSF CAREER Award}

\subjclass[2010]{14A22, 14C15, 14F10, 16D30, 18E30}
\date{\today}

\keywords{Motives, realizations, periods, dg category, Morita equivalence, additive invariants, Tannakian formalism, differential operators, noncommutative algebraic geometry}

\abstract{To every scheme, not necessarily smooth neither proper, we can associate its different mixed realizations (de Rham, Betti, \'etale, Hodge, etc) as well as its ring of periods. In this note, following an insight of Kontsevich, we prove that, after suitable modifications, these classical constructions can be extended from schemes to the broad setting of dg categories. This leads to new additive invariants of dg categories, which we compute in the case of differential operators, as well as to a theory of periods of dg categories.  Among other applications, we prove that the ring of periods of a scheme is invariant under projective homological duality. Along the way, we explicitly describe the modified mixed realizations using the Tannakian formalism.}
}

\maketitle
\vskip-\baselineskip
\vskip-\baselineskip



%
%
%
%
\section{Modified mixed realizations}
Given a perfect field $k$ and a commutative $\bbQ$-algebra~$R$, Voevodsky introduced in \cite[\S2]{Voevodsky} the category of geometric mixed motives $\mathrm{DM}_{\mathrm{gm}}(k;R)$. By construction, this $R$-linear rigid symmetric monoidal triangulated category comes equipped with a $\otimes$-functor $M(-)_R\colon \mathrm{Sm}(k) \to \mathrm{DM}_{\mathrm{gm}}(k;R)$, defined on smooth $k$-schemes of finite type, and with a $\otimes$-invertible object $\bbT:=R(1)[2]$ called the {\em Tate motive}. Moreover, when $k$ is of characteristic zero, the preceding functor can be extended from $\mathrm{Sm}(k)$ to the category $\mathrm{Sch}(k)$ of all $k$-schemes of finite type. Recall also the construction of Voevodsky's big category of mixed motives $\mathrm{DM}(k;R)$. This $R$-linear symmetric monoidal triangulated category admits arbitrary direct sums and $\mathrm{DM}_{\mathrm{gm}}(k;R)$ identifies with its full triangulated subcategory of compact objects.

A {\em differential graded (=dg) category $\cA$}, over a base field $k$, is a category enriched over complexes of $k$-vector spaces; see \S\ref{sub:dg}. Every (dg) $k$-algebra $A$ gives naturally rise to a dg category with a single object. Another source of examples is provided by schemes since the category of perfect complexes of every quasi-compact quasi-separated $k$-scheme $X$ admits a canonical dg enhancement $\perf_\dg(X)$; see \cite[\S4.4]{ICM-Keller}. Let us denote by $\dgcat(k)$ the category of (small) dg categories and by $\Hmo(k)$ its localization at the class of derived Morita equivalences.

Given an $R$-linear symmetric monoidal additive category with arbitrary direct sums $\cM$ and a $\otimes$-invertible object $\cO \in \cM$, consider the commutative monoid $\oplus_{m \in \bbZ} \cO^{\otimes m}$ in $\cM$ and the category of (right) $\oplus_m \cO^{\otimes m}$-modules $\mathrm{Mod}(\oplus_m \cO^{\otimes m})$. In what follows, we write $\gamma\colon \cM \to \mathrm{Mod}(\oplus_m \cO^{\otimes m})$ for the base-change functor. 
\begin{definition}[(Modified) mixed realization]\label{def:modified}
A {\em mixed realization} is an $R$-linear lax $\otimes$-functor $H\colon \mathrm{DM}(k;R) \to \cM$ such that $H(\oplus_m {\bbT}^{\otimes m})\simeq \oplus_m H(\bbT)^{\otimes m}$. The associated {\em modified mixed realization} is the following composition
$$ \mathrm{H}\colon \mathrm{Sm}(k) \stackrel{M(-)_R}{\too} \mathrm{DM}_{\mathrm{gm}}(k;R) \stackrel{(-)^\vee}{\too} \mathrm{DM}_{\mathrm{gm}}(k;R) \stackrel{H}{\too} \cM \stackrel{\gamma}{\too} \mathrm{Mod}(\oplus_mH(\bbT)^{\otimes m})\,,$$
where $(-)^\vee$ stands for the (contravariant) duality autoequivalence. 
\end{definition}
In what follows, given a smooth $k$-scheme of finite type $X$, we will write $H(X)$ instead of $H(M(X)_R^\vee)$. Our first main result is the following:
\begin{theorem}\label{thm:main}
Let $k$ be a perfect field and $R$ a commutative $\bbQ$-algebra. Given a mixed realization $H$, there exists a functor $H^{\mathrm{nc}}$ making the diagram commute:
\begin{equation}\label{eq:diagram-0}
\xymatrix{
\mathrm{Sm}(k) \ar[d]_-{X \mapsto \perf_\dg(X)} \ar[rr]^-{\mathrm{H}} &&  \mathrm{Mod}(\oplus_m H(\bbT)^{\otimes m}) \\
\Hmo(k) \ar@/_1.0pc/[urr]_-{H^{\mathrm{nc}}} &&\,.
}
\end{equation}
When $k$ is of characteristic zero, the same holds with $\mathrm{Sm}(k)$ replaced by $\mathrm{Sch}(k)$.
\end{theorem}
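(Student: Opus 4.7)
The plan is to exploit two features. First, the target category $\mathrm{Mod}(\oplus_m H(\bbT)^{\otimes m})$ trivializes Tate twists, so the modified realization descends to a Tate orbit category of $\mathrm{DM}_{\mathrm{gm}}(k;R)$. Second, the Kontsevich bridge identifies this Tate orbit category with a suitable category of noncommutative motives, through which $\perf_\dg(-)$ manifestly factors.

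First I would observe that in $\mathrm{Mod}(\oplus_m H(\bbT)^{\otimes m})$ the endofunctor $-\otimes \gamma(H(\bbT))$ is an equivalence, since $\gamma(H(\bbT))$ corresponds to a grading shift over the free commutative monoid $\oplus_m H(\bbT)^{\otimes m}$. Consequently, $\gamma \circ H \circ (-)^\vee$ sends the autoequivalence $-\otimes\bbT$ of $\mathrm{DM}_{\mathrm{gm}}(k;R)$ to an autoequivalence of the target, and therefore descends to a functor $\overline{\mathrm{H}} \colon \mathrm{DM}_{\mathrm{gm}}(k;R)/(-\otimes \bbT) \to \mathrm{Mod}(\oplus_m H(\bbT)^{\otimes m})$ out of the Tate orbit category.

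Next I would invoke the bridge due to Kontsevich, made precise in the author's earlier work: there is a fully faithful $R$-linear symmetric monoidal functor
$$ \Phi \colon \NChow(k;R) \hookrightarrow \bigl(\mathrm{DM}_{\mathrm{gm}}(k;R)/(-\otimes \bbT)\bigr)^{\natural} $$
compatible with the assignments $X \mapsto M(X)^\vee_R$ and $X \mapsto \perf_\dg(X)$ on smooth projective $k$-schemes. Composing $\overline{\mathrm{H}}$ with $\Phi$ and with the universal $\otimes$-functor from $\Hmo(k)$ (restricted to smooth proper dg categories) into $\NChow(k;R)$ yields $H^{\mathrm{nc}}$ on the full subcategory of $\Hmo(k)$ spanned by $\perf_\dg(X)$ for $X \in \mathrm{SmProj}(k)$. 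To extend from $\mathrm{SmProj}(k)$ to $\mathrm{Sm}(k)$, I would use the Gysin triangle in $\mathrm{DM}_{\mathrm{gm}}(k;R)$ relating a smooth scheme to a compactification together with its boundary, combined with the corresponding Morita-style localization sequences in $\Hmo(k)$ for $\perf_\dg$. The characteristic zero case for all of $\mathrm{Sch}(k)$ then follows from Hironaka's resolution of singularities together with $cdh$-descent for both $\mathrm{DM}(k;R)$ and the noncommutative side. Finally, extend the functor from the full subcategory of $\Hmo(k)$ spanned by these enhancements to all of $\Hmo(k)$ (arbitrarily on the remaining objects); only the restricted values are constrained by the diagram.

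The main obstacle is verifying the bridge in the second step: one must match the duality $(-)^\vee$ on $\mathrm{DM}_{\mathrm{gm}}(k;R)$ with the natural duality on noncommutative Chow motives under $\Phi$ (essentially an application of Poincar\'e duality, which accounts for the presence of $(-)^\vee$ in the definition of the modified realization) and check compatibility under pseudo-abelianization and under the extensions needed to handle non-projective and singular schemes. This is the technical heart of the argument and rests on the noncommutative motives literature.
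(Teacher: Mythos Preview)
Your outline has the right intuition---trivialize the Tate twist, then bridge to noncommutative motives---but the bridge runs in the opposite direction to what you claim, and this is where the argument breaks. The known comparison theorems embed the \emph{commutative} side into the \emph{noncommutative} side: in the pure setting one has $\Chow(k;R)/_{-\otimes R(1)} \hookrightarrow \NChow(k;R)$, and in the mixed setting (which is what is actually needed here) one has a fully faithful $\Phi \colon \Ho(\Mod(\mathrm{KGL}_R)) \hookrightarrow \Mot(k;R)$. There is no evident functor from $\NChow$ or $\Mot$ back to an orbit of $\mathrm{DM}_{\mathrm{gm}}$ on which you could precompose your $\overline{\mathrm H}$. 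Consequently your construction of $H^{\mathrm{nc}}$ does not go through as written; and the fallback of ``extend arbitrarily on the remaining objects of $\Hmo(k)$'' does not produce a functor, since morphisms in $\Hmo(k)$ between geometric and non-geometric dg categories would have nowhere coherent to go. Note also that $\NChow$ only receives smooth \emph{proper} dg categories, so already $\perf_\dg(X)$ for $X$ merely smooth does not land there.

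The paper fixes this by working one categorical level up. Instead of the orbit category of $\mathrm{DM}_{\mathrm{gm}}$ it uses the compactly generated triangulated category $\Ho(\Mod(\mathrm{KGL}_R))$; since $\mathrm{KGL}_R \simeq \oplus_m \mathrm{HZ}_R(m)[2m]$ and $\mathrm{DM}(k;R)\simeq \Ho(\Mod(\mathrm{HZ}_R))$, this is a ``big'' model for Tate trivialization. Because $\Phi$ is fully faithful and preserves coproducts, Brown representability furnishes a right adjoint $\Phi^r \colon \Mot(k;R) \to \Ho(\Mod(\mathrm{KGL}_R))$, and one sets
\[
H^{\mathrm{nc}} \colon \Hmo(k) \xrightarrow{\,U(-)_R\,} \Mot(k;R) \xrightarrow{\,\Phi^r\,} \Ho(\Mod(\mathrm{KGL}_R)) \longrightarrow \Mod(\oplus_m H(\bbT)^{\otimes m}).
\]
Full faithfulness gives $\Phi^r\Phi \simeq \id$, whence the diagram commutes on $\mathrm{Sm}(k)$ automatically, with no separate Gysin step needed. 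The extension to $\mathrm{Sch}(k)$ in characteristic zero does use Hironaka, but via descent of both sides along abstract blow-up squares (invoking $cdh$-descent for homotopy $K$-theory on the noncommutative side). The key device you are missing is this passage through a right adjoint of a fully faithful functor; that is what allows one to go ``backwards'' from the noncommutative world to the commutative one and hence to define $H^{\mathrm{nc}}$ as an honest functor on all of $\Hmo(k)$.
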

Intuitively speaking, Theorem \ref{thm:main} shows that as soon as we $\otimes$-trivialize the image of the Tate motive $H(\bbT)$, the modified mixed realization $\mathrm{H}$ factors through perfect complexes! This result is inspired by Kontsevich's definition of noncommutative {\'e}tale cohomology theory; consult the notes \cite{Kontsevich-talk}. 
\begin{corollary}[Derived Morita invariance]\label{cor:main}
Let $X$ and $Y$ be two smooth $k$-schemes of finite type and $H$ a mixed realization. If $\perf_\dg(X) \simeq \perf_\dg(Y)$ in $\Hmo(k)$, then $\mathrm{H}(X)\simeq \mathrm{H}(Y)$ in $\mathrm{Mod}(\oplus_m H(\bbT)^{\otimes m})$. When $k$ is of characteristic zero, the same holds without the smoothness assumption.
\end{corollary}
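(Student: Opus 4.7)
The plan is to derive this as an essentially formal consequence of Theorem \ref{thm:main}, so there is no real mathematical content beyond invoking the theorem and using the defining property of a functor. Concretely, I would first apply Theorem \ref{thm:main} to the given mixed realization $H$ to obtain a functor $H^{\mathrm{nc}}\colon \Hmo(k) \to \Mod(\oplus_m H(\bbT)^{\otimes m})$ together with the commutative diagram \eqref{eq:diagram-0}. Commutativity supplies canonical identifications $\mathrm{H}(X) \simeq H^{\mathrm{nc}}(\perf_\dg(X))$ and $\mathrm{H}(Y) \simeq H^{\mathrm{nc}}(\perf_\dg(Y))$.

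Next, I would exploit the fact that any functor preserves isomorphisms: from the hypothesis $\perf_\dg(X) \simeq \perf_\dg(Y)$ in $\Hmo(k)$, applying $H^{\mathrm{nc}}$ yields $H^{\mathrm{nc}}(\perf_\dg(X)) \simeq H^{\mathrm{nc}}(\perf_\dg(Y))$, and combining with the identifications above gives $\mathrm{H}(X) \simeq \mathrm{H}(Y)$ in $\Mod(\oplus_m H(\bbT)^{\otimes m})$. For the second assertion, I would simply repeat the same argument using the extension of \eqref{eq:diagram-0} from $\Sm(k)$ to $\Sch(k)$ provided by the final sentence of Theorem \ref{thm:main}; the functoriality argument is insensitive to whether $X$ and $Y$ are smooth.

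In short, there is no obstacle here whatsoever: all the work (the factorization of the modified mixed realization $\mathrm{H}$ through derived Morita equivalences, in the spirit of Kontsevich's noncommutative \'etale cohomology) is already embedded in Theorem \ref{thm:main}, and the corollary is the tautological statement that a functor sends isomorphic objects to isomorphic objects.
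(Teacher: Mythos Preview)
Your proposal is correct and matches the paper's approach exactly: the paper states the corollary immediately after Theorem~\ref{thm:main} with no separate proof, treating it as the tautological consequence of the factorization~\eqref{eq:diagram-0} that you spell out.
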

\section{Examples of modified mixed realizations}\label{sec:ex}
Let $R$ be a field extension of $\bbQ$ and $(\cC,\otimes, {\bf 1})$ an $R$-linear neutral Tannakian category equipped with a $\otimes$-invertible ``Tate'' object ${\bf 1}(1)$. In what follows, we write $\mathrm{Gal}(\cC)$ for the Tannakian group of $\cC$ and $\mathrm{Gal}_0(\cC)$ for the kernel of the homomorphism $\mathrm{Gal}(\cC) \twoheadrightarrow \bbG_m$, where $\bbG_m$ is the Tannakian group of the smallest Tannakian subcategory of $\cC$ containing ${\bf 1}(1)$. 

Let $H\colon \mathrm{DM}(k;R) \to \cD(\mathrm{Ind}(\cC))$ be an $R$-linear triangulated $\otimes$-functor with values in the derived category of ind-objects of $\cC$. Assume that $H$ preserves arbitrary direct sums and sends $R(1)$ to ${\bf 1}(1)$. Given such a functor, let $H^\ast$ be its composition with the total cohomology functor $\cD(\mathrm{Ind}(\cC)) \to \mathrm{Gr}_\bbZ(\mathrm{Ind}(\cC))$. Note that $H^\ast(\bbT)=H^2(\bbP^1)^{\otimes (-1)}$ and that $H$ and $H^\ast$ are mixed realizations.

Recall from the Tannakian formalism that, since $\cC$ is an $R$-linear neutral Tannakian category, $\mathrm{Gr}^b_\bbZ(\cC)$ is $\otimes$-equivalent to the $R$-linear category $\mathrm{Rep}_\bbZ(\mathrm{Gal}(\cC))$ of finite dimensional $\bbZ$-graded continuous representations of $\mathrm{Gal}(\cC)$. Recall also that the inclusion $\mathrm{Gal}_0(\cC) \subset \mathrm{Gal}(\cC)$ gives rise to the following restriction functor
\begin{eqnarray}\label{eq:restriction}
& \mathrm{Rep}_\bbZ(\mathrm{Gal}(\cC)) \too \mathrm{Rep}_{\bbZ/2}(\mathrm{Gal}_0(\cC)) & \{V_n\}_{n \in \bbZ} \mapsto (\oplus_{n} V_{2n}, \oplus_n V_{2n+1})\,,
\end{eqnarray}
where $\mathrm{Rep}_{\bbZ/2}(\mathrm{Gal}_0(\cC))$ stands for the category of finite dimensional $\bbZ/2$-graded continuous representations of $\mathrm{Gal}_0(\cC)$. Our second main result is the following:
\begin{theorem}\label{thm:main2}
Under the above assumptions, the restriction of the  base-change functor $\mathrm{Gr}_\bbZ(\mathrm{Ind}(\cC)) \stackrel{\gamma}{\to}\mathrm{Mod}(\oplus_m H^2(\bbP^1)^{\otimes (-m)})$ to $\mathrm{Gr}^b_\bbZ(\cC)$ admits a factorization:
$$ \mathrm{Gr}^b_\bbZ(\cC) \simeq \mathrm{Rep}_\bbZ(\mathrm{Gal}(\cC)) \stackrel{\eqref{eq:restriction}}{\too} \mathrm{Rep}_{\bbZ/2}(\mathrm{Gal}_0(\cC)) \subsetneq \mathrm{Mod}(\oplus_m H^2(\bbP^1)^{\otimes (-m)})\,.$$
Consequently, whenever the functor $H$ preserves compact objects, the modified mixed realization associated to $H^\ast$ is given by 
\begin{eqnarray*}
\mathrm{H}^\ast \colon \mathrm{Sm}(k) \too \mathrm{Rep}_{\bbZ/2}(\mathrm{Gal}_0(\cC)) && X \mapsto (\oplus_n H^{2n}(X), \oplus_n H^{2n+1}(X))\,.
\end{eqnarray*}
Moreover, when $k$ is of characteristic zero we can replace $\mathrm{Sm}(k)$ by $\mathrm{Sch}(k)$.
\end{theorem}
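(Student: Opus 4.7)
The plan is to exploit the $\otimes$-invertibility of $T:=H^2(\bbP^1)\simeq \mathbf{1}(1)$ together with the fact that $T$ corresponds, under Tannakian duality, to the character $\chi\colon\mathrm{Gal}(\cC)\twoheadrightarrow\bbG_m$ whose kernel is $\mathrm{Gal}_0(\cC)$; in particular, $T|_{\mathrm{Gal}_0(\cC)}$ becomes the unit object. The first reduction is to invoke the Tannakian equivalence $\mathrm{Gr}^b_\bbZ(\cC)\simeq\mathrm{Rep}_\bbZ(\mathrm{Gal}(\cC))$, transforming the problem into an explicit computation with $\bbZ$-graded $\mathrm{Gal}(\cC)$-representations and with graded modules over $\bigoplus_m T^{\otimes(-m)}$.

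The core is the concrete description of $\mathrm{Mod}(\bigoplus_m T^{\otimes(-m)})$. Since $T$ is $\otimes$-invertible of external $\bbZ$-degree $2$, the module structure equips any $M=\bigoplus_n M_n$ with isomorphisms $T\otimes M_n\stackrel{\sim}{\to} M_{n+2}$, so $M$ is determined by $(M_0,M_1)\in\mathrm{Ind}(\cC)^{\times 2}$ together with its $\mathrm{Gal}(\cC)$-equivariant structure. Combined with the standard Tannakian identification $\mathrm{Mod}_{\mathrm{Rep}(\mathrm{Gal}(\cC))}(\cO(\bbG_m))\simeq\mathrm{Rep}(\mathrm{Gal}_0(\cC))$ (where $\cO(\bbG_m)\simeq\bigoplus_m T^{\otimes m}$ is pulled back along $\chi$), and with the $\bbZ/2$-collapse induced by the grading, this embeds $\mathrm{Rep}_{\bbZ/2}(\mathrm{Gal}_0(\cC))$ as a full subcategory of $\mathrm{Mod}(\bigoplus_m T^{\otimes(-m)})$; strictness of the inclusion reflects that the latter also contains genuine ind-objects. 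To verify the factorization, I would compute directly: for $V=\{V_n\}\in\mathrm{Gr}^b_\bbZ(\cC)$, the base change yields $\gamma(V)_0=\bigoplus_m V_{2m}\otimes T^{-m}$ and $\gamma(V)_1=\bigoplus_m V_{2m+1}\otimes T^{-m}$, each of which collapses on restriction to $\mathrm{Gal}_0(\cC)$ to $\bigoplus_n V_{2n}|_{\mathrm{Gal}_0(\cC)}$ and $\bigoplus_n V_{2n+1}|_{\mathrm{Gal}_0(\cC)}$, reproducing exactly the output of the restriction functor~\eqref{eq:restriction}.

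For the second assertion, I would combine Theorem~\ref{thm:main} with the factorization just established. Theorem~\ref{thm:main} gives $\mathrm{H}^\ast=H^{\ast,\mathrm{nc}}\circ\perf_\dg(-)$, while Definition~\ref{def:modified} identifies $\mathrm{H}^\ast(X)$ with $\gamma(H^\ast(M(X)_R^\vee))$. Compactness of $H$ sends the compact object $M(X)_R^\vee\in\mathrm{DM}_{\mathrm{gm}}(k;R)$ to an object with bounded, finite-dimensional cohomology, so $H^\ast(M(X)_R^\vee)\in\mathrm{Gr}^b_\bbZ(\cC)$ and the computation of the previous paragraph applies. With the convention $H^n(X):=H^n(M(X)_R^\vee)$, one reads off $\mathrm{H}^\ast(X)=(\bigoplus_n H^{2n}(X),\bigoplus_n H^{2n+1}(X))$, and the extension from $\mathrm{Sm}(k)$ to $\mathrm{Sch}(k)$ in characteristic zero is inherited directly from the corresponding extension in Theorem~\ref{thm:main}. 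The main obstacle is the Tannakian descent step in the graded setting: one must correctly balance the external $\bbZ$-grading against the internal $\bbG_m$-action from the Tate character, so that the resulting embedding $\mathrm{Rep}_{\bbZ/2}(\mathrm{Gal}_0(\cC))\hookrightarrow \mathrm{Mod}(\bigoplus_m T^{\otimes(-m)})$ intertwines~\eqref{eq:restriction} with $\gamma$.
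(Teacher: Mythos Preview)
Your argument is correct and arrives at the same conclusion, but the route differs from the paper's. The paper does not analyze $\mathrm{Mod}(\oplus_m H^2(\bbP^1)^{\otimes(-m)})$ directly. Instead it first observes that $\gamma$ coequalizes $-\otimes H^2(\bbP^1)^{\otimes(-1)}$ with the identity on $\mathrm{Gr}^b_\bbZ(\cC)$, and therefore factors through the \emph{orbit category} $\mathrm{Gr}^b_\bbZ(\cC)/_{-\otimes H^2(\bbP^1)^{\otimes(-1)}}$ via a functor $\gamma'$; full faithfulness of $\gamma'$ is then a one-line Hom computation using the free/forget adjunction and compactness. The Tannakian identification is carried out on the orbit category: $\mathrm{Gr}^b_\bbZ(\cC)\simeq\mathrm{Rep}(\mathrm{Gal}(\cC)\times\bbG_m)$ carries a Tate-triple structure in the sense of Deligne--Milne, and a result of Marcolli--Tabuada identifies the orbit category with $\mathrm{Rep}(\ker(\mathrm{Gal}(\cC)\times\bbG_m\twoheadrightarrow\bbG_m))=\mathrm{Rep}(\mathrm{Gal}_0(\cC)\times\mu_2)\simeq\mathrm{Rep}_{\bbZ/2}(\mathrm{Gal}_0(\cC))$, the $\mu_2$ arising because the second component of the surjection is squaring.

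Your approach trades this categorical machinery for a concrete description of graded modules over a Laurent-type monoid: the invertibility of $T$ in degree $2$ forces $M_n\simeq T\otimes M_{n-2}$, so a module is a pair $(M_0,M_1)$, and then the ungraded descent $\mathrm{Mod}_{\mathrm{Rep}(\mathrm{Gal}(\cC))}(\cO(\bbG_m))\simeq\mathrm{Rep}(\mathrm{Gal}_0(\cC))$ handles each parity separately. This is more elementary and self-contained (it avoids the orbit-category/Tate-triple citation), at the cost of having to manage the interaction between the external $\bbZ$-grading and the internal $\bbG_m$ by hand --- precisely the obstacle you flag at the end. The paper's orbit-category packaging makes that bookkeeping automatic and yields full faithfulness without a separate module-theoretic check. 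One cosmetic point: with the paper's conventions $H^2(\bbP^1)\simeq\mathbf{1}(-1)$ rather than $\mathbf{1}(1)$, but since only $\otimes$-invertibility and the associated character matter, this does not affect your argument.
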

\smallbreak\noindent\textbf{Modified Nori realization.}
Let $k$ be a field of characteristic zero, equipped with an embedding $k \hookrightarrow \bbC$, and $R$ a field extension of $\bbQ$. Recall from \cite[\S2]{CG}\cite[\S8]{Huber} the construction of the $R$-linear neutral Tannakian category 
of Nori mixed motives $\mathrm{NMM}(k;R)$ and of its $\otimes$-invertible Tate object ${\bf 1}(1)$. As proved in \cite[Prop.~7.11]{CG}, there exists an $R$-linear triangulated $\otimes$-functor $H_N$ from $\mathrm{DM}(k;R)$ to $\cD(\mathrm{Ind}(\mathrm{NMM}(k;R)))$ which satisfies the conditions of Theorem \ref{thm:main2}. Consequently, the modified mixed realization associated to $H_N^\ast$ is given by
\begin{eqnarray*}
\mathrm{H}_N^\ast \colon \mathrm{Sch}(k) \too \mathrm{Rep}_{\bbZ/2}(\mathrm{Gal}_0(\mathrm{NMM}(k;R))) && X \mapsto (\oplus_n H_N^{2n}(X), \oplus_n H_N^{2n+1}(X))\,. 
\end{eqnarray*}
\smallbreak\noindent\textbf{Modified Jannsen realization.}
Recall from \cite[Part I]{Jannsen} the construction of the $R$-linear neutral Tannakian category of Jannsen mixed motives $\mathrm{JMM}(k;R)$ and of its Tate object ${\bf 1}(1)$. As explained in \cite[Prop.~10.3.3]{Huber}, the universal property of Nori's category of mixed motives yields an exact $\otimes$-functor from $\mathrm{NMM}(k;R)$ to $\mathrm{JMM}(k;R)$. The composition $H_J$ of $H_N$ with the functor from $\cD(\mathrm{Ind}(\mathrm{NMM}(k;R)))$ to $\cD(\mathrm{Ind}(\mathrm{JMM}(k;R)))$ satisfies the conditions of Theorem \ref{thm:main2}. Consequently, the modified mixed realization associated to $H^\ast_J$ is given by
\begin{eqnarray*}
\mathrm{H}_J^\ast \colon \mathrm{Sch}(k) \too \mathrm{Rep}_{\bbZ/2}(\mathrm{Gal}_0(\mathrm{JMM}(k;R))) && X \mapsto (\oplus_n H_J^{2n}(X), \oplus_n H_J^{2n+1}(X))\,. 
\end{eqnarray*}
\smallbreak\noindent\textbf{Modified de Rham realization.}
Let $\mathrm{Vect}(k)$ be the $k$-linear neutral Tannakian category of finite dimensional $k$-vector spaces, equipped with ${\bf 1}(1):=k$. In this case, the Tannakian group $\mathrm{Gal}_0(\mathrm{Vect}(k))$ is trivial and $\mathrm{Rep}_{\bbZ/2}(\mathrm{Gal}_0(\mathrm{Vect}(k)))$ reduces to the category of finite dimensional $\bbZ/2$-graded $k$-vector spaces $\mathrm{Vect}_{\bbZ/2}(k)$. Recall that $\mathrm{JMM}(k;\bbQ)$ comes equipped with an exact de Rham realization $\otimes$-functor from $\mathrm{JMM}(k;\bbQ)$ to $\mathrm{Vect}(k)$. The composition $H_{dR}$ of $H_J$ with the induced functor from $\cD(\mathrm{Ind}(\mathrm{JMM}(k;\bbQ)))$ to $\cD(\mathrm{Ind}(\mathrm{Vect}(k)))$ satisfies the conditions of Theorem \ref{thm:main2}. Consequently, the modified mixed realization associated to $H^\ast_{dR}$ is given by
\begin{eqnarray*}
\mathrm{H}_{dR}^\ast \colon \mathrm{Sch}(k) \too \mathrm{Vect}_{\bbZ/2}(k) && X \mapsto (\oplus_n H_{dR}^{2n}(X), \oplus_n H_{dR}^{2n+1}(X))\,. 
\end{eqnarray*}
\smallbreak\noindent\textbf{Modified Betti realization.}
Let $\mathrm{Vect}(\bbQ)$ be the $\bbQ$-linear neutral Tannakian category of finite dimensional $\bbQ$-vector spaces, equipped with ${\bf 1}(1):=\bbQ$. Recall that $\mathrm{JMM}(k;\bbQ)$ comes equipped with an exact Betti realization $\otimes$-functor from $\mathrm{JMM}(k;\bbQ)$ to $\mathrm{Vect}(\bbQ)$. The composition $H_{B}$ of $H_J$ with the induced functor from $\cD(\mathrm{Ind}(\mathrm{JMM}(k;\bbQ)))$ to $\cD(\mathrm{Ind}(\mathrm{Vect}(\bbQ)))$ satisfies the conditions of Theorem \ref{thm:main2}. Consequently, the modified mixed realization associated to $H^\ast_{B}$ is given by
\begin{eqnarray*}
\mathrm{H}_B^\ast \colon \mathrm{Sch}(k) \too \mathrm{Vect}_{\bbZ/2}(\bbQ) && X \mapsto (\oplus_n H_B^{2n}(X), \oplus_nH_B^{2n+1}(X))\,. 
\end{eqnarray*}
\smallbreak\noindent\textbf{Modified de Rham-Betti realization.}
Let $\mathrm{Vect}(k,\bbQ)$ be the $\bbQ$-linear neutral Tannakian category of triples $(V,W,\omega)$ (where $V$ is a finite dimensional $k$-vector space, $W$ a finite dimensional $\bbQ$-vector~space, and $\omega$ an isomorphism $V\otimes_k \bbC\to W\otimes_\bbQ \bbC$), equipped with the Tate object ${\bf 1}(1):=(k, \bbQ, \cdot (2\pi i)^{-1})$. Recall that $\mathrm{JMM}(k;\bbQ)$ comes equipped with an exact de Rham-Betti realization $\otimes$-functor from $\mathrm{JMM}(k;\bbQ)$ to $\mathrm{Vect}(k,\bbQ)$. The composition $H_{dRB}$ of $H_J$ with the functor from $\cD(\mathrm{Ind}(\mathrm{JMM}(k;\bbQ)))$ to $\cD(\mathrm{Ind}(\mathrm{Vect}(k,\bbQ)))$ satisfies the conditions of Theorem \ref{thm:main2}. Consequently, the modified mixed realization associated to $H^\ast_{dRB}$ is~given~by
\begin{eqnarray*}
\mathrm{H}_{dRB}^\ast \colon \mathrm{Sch}(k) \too \mathrm{Rep}_{\bbZ/2}(\mathrm{Gal}_0(\mathrm{Vect}(k,\bbQ))) & X \mapsto (\oplus_nH_{dRB}^{2n}(X), \oplus_n H_{dRB}^{2n+1}(X))\,. &
\end{eqnarray*}
\smallbreak\noindent\textbf{Modified {\'E}tale realization.}
Given a prime $l$, let $\mathrm{Rep}_l(\mathrm{Gal}(\overline{k}/k))$ be the $\bbQ_l$-linear neutral Tannakian category of finite dimensional $l$-adic representations of the absolute Galois group of $k$, equipped with the Tate object ${\bf 1}(1):=\mathrm{lim}_n \mu_{l^n}$. Recall that $\mathrm{JMM}(k;\bbQ)$ comes equipped with an exact {\'e}tale realization $\otimes$-functor from $\mathrm{JMM}(k;\bbQ)$ to $\mathrm{Rep}_l(\mathrm{Gal}(\overline{k}/k))$. The composition $H_{et}$ of $H_J$ with the functor from $\cD(\mathrm{Ind}(\mathrm{JMM}(k;\bbQ)))$ to $\cD(\mathrm{Ind}(\mathrm{Rep}_l(\mathrm{Gal}(\overline{k}/k))))$ satisfies the conditions of Theorem \ref{thm:main2}. Consequently, the modified mixed realization associated to $H_{et}^\ast$ is given by
\begin{eqnarray*}
\mathrm{H}_{et}^\ast \colon \mathrm{Sch}(k) \too \mathrm{Rep}_{\bbZ/2}(\mathrm{Gal}_0(\overline{k}/k)) && X \mapsto (\oplus_n H_{et}^{2n}(X), \oplus_n H_{et}^{2n+1}(X))\,.
\end{eqnarray*}
\begin{remark}
The preceding functor was suggested by Kontsevich in \cite{Kontsevich-talk}.
\end{remark}
\smallbreak\noindent\textbf{Modified Hodge realization.}
Recall from \cite[\S1]{Steenbrink} the construction of the $\bbQ$-linear neutral Tannakian category of mixed $\bbQ$-Hodge structures $\mathrm{MHS}(\bbQ)$ and of its $\otimes$-invertible Tate object ${\bf 1}(1)$. Recall that $\mathrm{JMM}(k;\bbQ)$ comes equipped with an exact Hodge realization $\otimes$-functor from $\mathrm{JMM}(k;\bbQ)$ to $\mathrm{MHS}(\bbQ)$. Let us denote by  The composition $H_{Hod}$ of $H_J$ with the induced functor from $\cD(\mathrm{Ind}(\mathrm{JMM}(k;\bbQ)))$ to $\cD(\mathrm{Ind}(\mathrm{MHS}(\bbQ)))$ satisfies the conditions of Theorem \ref{thm:main2}. Consequently, the modified mixed realization associated to $H_{Hod}$ is given by
\begin{eqnarray*}
\mathrm{H}_{Hod}^\ast \colon \mathrm{Sch}(k) \too \mathrm{Rep}_{\bbZ/2}(\mathrm{Gal}_0(\mathrm{MHS}(\bbQ))) && X \mapsto (\oplus_n H_{Hod}^{2n}(X), \oplus_n H_{Hod}^{2n+1}(X))\,.
\end{eqnarray*}
\begin{remark}[Modified pure $\bbR$-Hodge structures]
Recall from \cite[pages 33-34]{Steenbrink} the construction of the $\bbR$-linear neutral Tannakian category of pure $\bbR$-Hodge structures $\mathrm{HS}(\bbR)$ and of its $\otimes$-invertible Tate object ${\bf 1}(1)$. In this case, the Tannakian group $\mathrm{Gal}(\mathrm{HS}(\bbR))$ is the Hodge-Deligne circle $\mathrm{Res}_{\bbC/\bbR}(\bbG_m)$ and $\mathrm{Gal}_0(\mathrm{HS}(\bbR))$ the unitary group $U(1)$. Base-change along $\bbQ\subset\bbR$ gives then rise to the modified realization 
\begin{eqnarray*}
(H^\ast_{Hod})_\bbR\colon \mathrm{SmProj}(k) \too \mathrm{Rep}_{\bbZ/2}(U(1)) && X \mapsto (\oplus_n H_{Hod}^{2n}(X)_\bbR, \oplus_n H_{Hod}^{2n+1}(X)_\bbR)\,,
\end{eqnarray*}
where $\mathrm{SmProj}(k)$ stands for the category of smooth projective $k$-schemes.
\end{remark}
\section{New additive invariants}\label{sec:NCrealizations}
Let $\cA, \cB \subseteq \cC$ be dg categories yielding a semi-orthogonal decomposition $\dgHo(\cC)=\langle \dgHo(\cA), \dgHo(\cB)\rangle$ in the sense of Bondal-Orlov \cite{BO}. A functor $E\colon \Hmo(k) \to \cM$, with values in an additive category, is called an {\em additive invariant} if, for every dg categories $\cA, \cB \subseteq \cC$, the inclusions $\cA, \cB \subseteq \cC$ induce an isomorphism $E(\cA)\oplus E(\cB) \simeq E(\cC)$. Examples of additive invariants include algebraic $K$-theory, cyclic homology and all its variants, topological Hochschild homology, etc; consult \cite[\S2.2]{book}. As an application of Theorem \ref{thm:main}, we obtain several new examples of additive invariants:
\begin{proposition}\label{prop:new}
Given a mixed realization $H$, the associated functor $H^{\mathrm{nc}}$ (as in Theorem \ref{thm:main}) is an additive invariant. Moreover, the following holds:
\begin{itemize}
\item[(i)] Given a smooth $k$-scheme of finite type $Y$ and a smooth closed subscheme $X \hookrightarrow Y$, we have an isomorphism between $H^{\mathrm{nc}}(\perf_\dg(Y)_X)$ and $\mathrm{H}(X)$ where $\perf_\dg(Y)_X$ stands for the full dg subcategory of $\perf_\dg(Y)$ consisting of those perfect complexes which are supported on $X$;
\item[(ii)] Given a dg category $\cA$, we have $H^{\mathrm{nc}}(\cA[t]) \simeq H^{\mathrm{nc}}(\cA)$ where $\cA[t]:=\cA\otimes k[t]$.
\end{itemize}
\end{proposition}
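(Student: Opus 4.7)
The three assertions are treated in turn, all leveraging Theorem \ref{thm:main}.

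For additivity, the strategy is to observe that the construction of $H^{\mathrm{nc}}$ in Theorem \ref{thm:main} factors through the universal additive motivator $\Hmo(k)\to \Hmo_0(k)$ of noncommutative motives, in which every semi-orthogonal decomposition $\dgHo(\cC)=\langle \dgHo(\cA),\dgHo(\cB)\rangle$ splits as $\cA\oplus\cB\simeq\cC$. Given such a factorization, additivity of $H^{\mathrm{nc}}$ is automatic. Alternatively, a direct check verifies that the natural map $H^{\mathrm{nc}}(\cA)\oplus H^{\mathrm{nc}}(\cB)\to H^{\mathrm{nc}}(\cC)$ induced by the inclusions admits an inverse built from the two projection functors of the Bondal--Orlov gluing.

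For item (i), the plan is to combine the Gysin triangle on the commutative side with a Thomason localization on the noncommutative side. Voevodsky's Gysin triangle in $\mathrm{DM}_{\mathrm{gm}}(k;R)$ applied to the pair $(Y,X)$, after dualizing and applying $H$ then $\gamma$, yields a distinguished triangle $\mathrm{H}(X)\to\mathrm{H}(Y)\to\mathrm{H}(Y\setminus X)\to$ in $\mathrm{Mod}(\oplus_m H(\bbT)^{\otimes m})$, where the usual Tate twist $(c)[2c]$ (with $c$ the codimension) has been $\otimes$-trivialized by $\gamma$. On the noncommutative side, one has the Thomason localization $\perf_\dg(Y)_X\to\perf_\dg(Y)\to\perf_\dg(Y\setminus X)$, which after applying $H^{\mathrm{nc}}$ yields a parallel triangle. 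Identifying the middle and right terms via Theorem \ref{thm:main} forces $H^{\mathrm{nc}}(\perf_\dg(Y)_X)\simeq \mathrm{H}(X)$. The main obstacle is that this argument needs $H^{\mathrm{nc}}$ to send the Thomason localization to a distinguished triangle, a \emph{localizing} property strictly stronger than the additivity established in the first part; the expected workaround is to use the blow-up $\tilde{Y}\to Y$ along $X$, Orlov's semi-orthogonal decomposition of $\perf_\dg(\tilde{Y})$, and the companion decomposition $\mathrm{H}(\tilde{Y})\simeq \mathrm{H}(Y)\oplus \mathrm{H}(X)^{\oplus(c-1)}$ (Tate twists again trivialize) to bypass the localizing requirement and deduce the result by additivity alone.

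For item (ii), the derived Morita equivalence $k[t]\simeq \perf_\dg(\bbA^1)$ in $\Hmo(k)$ gives $\cA[t]\simeq \cA\otimes \perf_\dg(\bbA^1)$. The inclusion of constants $\cA\hookrightarrow\cA[t]$ and the evaluation $\cA[t]\twoheadrightarrow\cA$ at $t=0$ compose to the identity, exhibiting $H^{\mathrm{nc}}(\cA)$ as a direct summand of $H^{\mathrm{nc}}(\cA[t])$. The reverse identification reduces, via the $\otimes$-compatibility of $H^{\mathrm{nc}}$ inherited from the lax monoidal structure of $H$ and from $\gamma$, to the scheme-level identity $\mathrm{H}(\bbA^1)\simeq \mathrm{H}(\Spec k)$, which is immediate from the $\bbA^1$-homotopy invariance of $M(-)_R$ built into $\mathrm{DM}(k;R)$. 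The delicate point is justifying that $-\otimes \perf_\dg(\bbA^1)$ interacts well with $H^{\mathrm{nc}}$ on a general dg category $\cA$ (not necessarily of the form $\perf_\dg(X)$); this is expected to follow from the universal property underlying the construction of $H^{\mathrm{nc}}$.
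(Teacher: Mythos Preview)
Your plan for additivity is essentially the paper's: $H^{\mathrm{nc}}$ is constructed (see \eqref{eq:composition}) as a composite $H'\circ\iota\circ\Phi^r\circ U(-)_R$, where $U(-)_R\colon\Hmo(k)\to\Mot(k;R)$ is already an additive invariant and the remaining functors are additive; no further argument is needed.

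For items (i) and (ii), however, you are working much harder than necessary, and in (ii) you leave a real gap. The paper's route is uniform and short: both statements hold already at the level of $\Mot(k;R)$, and $H^{\mathrm{nc}}$ factors through $U(-)_R$. For (i), there is an isomorphism $U(\perf_\dg(Y)_X)_R\simeq U(\perf_\dg(X))_R$ in $\Mot(k;R)$ (a d\'evissage/Gysin-type fact for smooth pairs in the $\bbA^1$-localized noncommutative setting, cf.\ \cite{Gysin}); applying the remaining functors and Theorem~\ref{thm:main} gives the result immediately. For (ii), $\Mot(k;R)$ is built so that $U(-)_R$ is $\bbA^1$-homotopy invariant for \emph{arbitrary} dg categories (see \cite{A1-homotopy}), hence $U(\cA[t])_R\simeq U(\cA)_R$ directly, with no need to invoke monoidality.

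Your alternative for (i) via blow-ups and Orlov's decomposition is plausible and would stay within additivity, but it is a detour compared to simply citing the isomorphism in $\Mot(k;R)$. Your approach to (ii) has the gap you yourself flag: you need $H^{\mathrm{nc}}(\cA\otimes\perf_\dg(\bbA^1))\simeq H^{\mathrm{nc}}(\cA)\otimes H^{\mathrm{nc}}(\perf_\dg(\bbA^1))$ for a general $\cA$, but nothing in the construction of $H^{\mathrm{nc}}$ guarantees this---the lax monoidal structure of $H$ and $\gamma$ controls behaviour on $\mathrm{DM}(k;R)$, not on the image of arbitrary dg categories under $\Phi^r\circ U(-)_R$. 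The paper's factorization through $\Mot(k;R)$ sidesteps this entirely, since $\bbA^1$-invariance is already imposed there before any monoidal considerations arise.
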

By combining Proposition \ref{prop:new} with the modified mixed realizations of \S\ref{sec:ex}, we hence obtain the new additive invariants $H_N^{\mathrm{nc}}, H_J^{\mathrm{nc}}, H_{dR}^{\mathrm{nc}}, H_B^{\mathrm{nc}}, H_{dRB}^{\mathrm{nc}}, H_{et}^{\mathrm{nc}}, H_{Hod}^{\mathrm{nc}}$. Note that Theorem \ref{thm:main} determines the value of $H^{\mathrm{nc}}$ at the dg categories of the form $\perf_\dg(X)$. Moreover, Proposition \ref{prop:new}(i) shows that in order to compute $\mathrm{H}(X)$ we can first embed $X$ into {\em any} ambient smooth $k$-scheme $Y$ and then use the associated dg category $\perf_\dg(Y)_X$. In what follows, we compute the value of the new additive invariants $H^{\mathrm{nc}}$ at some ``truly noncommutative'' dg categories.
\smallbreak

\begin{example}[Finite dimensional algebras of finite global dimension]\label{ex:finite}
Let $A$ be a finite dimensional $k$-algebra of finite global dimension. We write $r$ for the number of simple (right) $A$-modules and $C_j$ for the center of the division $k$-algebra $\mathrm{End}_A(S_j)$ associated to the simple (right) $A$-module $S_j$. By combining Proposition \ref{prop:new} with \cite[pages 386-387]{Azumaya}, we obtain the computation $H^{\mathrm{nc}}(A) \simeq \oplus^r_{j=1} \mathrm{H}(\mathrm{Spec}(C_j))$. When $k$ is algebraically closed, we have $C_j=k$ and hence $H^{\mathrm{nc}}(A)\simeq \oplus^r_{j=1} \mathrm{H}(\mathrm{Spec}(k))$.
\end{example}
\begin{proposition}[Calkin algebra]\label{prop:Calkin}
%
Given a mixed realization $H$ as in Theorem \ref{thm:main2}, we have the following isomorphism of $\bbZ/2$-graded representations
$$ (H^\ast)^{\mathrm{nc}}(\Sigma(X)) \simeq (\oplus_n H^{2n+1}(X), \oplus_n H^{2n}(X)) \in \mathrm{Rep}_{\bbZ/2}(\mathrm{Gal}_0(\cC))\,,$$
where $\Sigma$ stands for the Calkin algebra and $\Sigma(X):=\perf_\dg(X) \otimes \Sigma$.
\end{proposition}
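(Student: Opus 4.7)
The plan is to realize $\Sigma(X)$ as a quotient in a short exact sequence of dg categories whose other two terms are controlled by Theorem~\ref{thm:main2}. Start from the canonical short exact sequence of $k$-algebras
\[ 0 \longrightarrow \mathcal{K} \longrightarrow \mathcal{B} \longrightarrow \Sigma \longrightarrow 0, \]
where $V$ is a countably infinite-dimensional $k$-vector space, $\mathcal{B}=\End_k(V)$, and $\mathcal{K}\simeq M_\infty(k)$ is the ideal of finite-rank endomorphisms. Tensoring over $k$ with $\perf_\dg(X)$ preserves this exactness and yields a short exact sequence
\[ 0 \longrightarrow \mathcal{K}(X) \longrightarrow \mathcal{B}(X) \longrightarrow \Sigma(X) \longrightarrow 0 \]
in $\dgcat(k)$, where $\mathcal{K}(X):=\perf_\dg(X)\otimes\mathcal{K}$ and $\mathcal{B}(X):=\perf_\dg(X)\otimes\mathcal{B}$.

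First I would note that $\mathcal{B}$ is flasque: the isomorphism $V\simeq V\oplus V$ induces a natural isomorphism $\mathcal{B}\oplus\mathcal{B}\simeq\mathcal{B}$ (the Eilenberg swindle), and this flasqueness is preserved under $-\otimes\perf_\dg(X)$. Any additive invariant annihilates a flasque dg category, so Proposition~\ref{prop:new} applied to $(H^\ast)^{\mathrm{nc}}$ forces $(H^\ast)^{\mathrm{nc}}(\mathcal{B}(X))=0$. On the other end, since $\mathcal{K}\simeq M_\infty(k)$ is Morita equivalent to $k$, the dg category $\mathcal{K}(X)$ is Morita equivalent to $\perf_\dg(X)$; Theorem~\ref{thm:main2} then identifies
\[ (H^\ast)^{\mathrm{nc}}(\mathcal{K}(X)) \simeq (\oplus_n H^{2n}(X),\; \oplus_n H^{2n+1}(X)). \]

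Second I would exploit that the underlying functor $H$ takes values in the triangulated category $\cD(\mathrm{Ind}(\cC))$: the short exact sequence of dg categories above should yield, after applying (a localizing refinement of) $H^{\mathrm{nc}}$, a distinguished triangle whose middle term vanishes, producing a connecting isomorphism $H^{\mathrm{nc}}(\Sigma(X)) \simeq H^{\mathrm{nc}}(\mathcal{K}(X))[1]$. Passing to total cohomology, the shift $[1]$ becomes a translation of the $\bbZ$-grading by one, which under the restriction functor~\eqref{eq:restriction} interchanges the even and odd $\bbZ/2$-graded components. This yields exactly
\[ (H^\ast)^{\mathrm{nc}}(\Sigma(X)) \simeq (\oplus_n H^{2n+1}(X),\; \oplus_n H^{2n}(X)). \]

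The hard part will be the second step. Additive invariance as stated only provides splittings for semi-orthogonal decompositions, whereas the Calkin sequence is manifestly not split, so a genuine connecting isomorphism does not follow formally. To obtain it one must revisit the construction in Theorem~\ref{thm:main} and argue that $H^{\mathrm{nc}}$ inherits a localizing (fiber-sequence) property from the triangulated structure of $H$ on $\mathrm{DM}(k;R)$; alternatively, one can appeal to a Karoubi--Wodzicki style excision result specific to the Calkin sequence, which realizes $-\otimes\Sigma$ as a noncommutative suspension operator on sufficiently well-behaved invariants of dg categories.
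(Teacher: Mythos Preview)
Your strategy is sound, and in fact the ``alternative'' you mention in the last sentence is precisely what the paper does. The paper's proof is two lines: it invokes \cite[Thm.~1.2]{Universal}, which establishes the isomorphism $U(\Sigma(X))_R \simeq \Sigma\bigl(U(\perf_\dg(X))_R\bigr)$ directly in the triangulated category $\Mot(k;R)$ of noncommutative mixed motives (this is exactly the ``$-\otimes\Sigma$ is noncommutative suspension'' statement you allude to). Once that identity is in hand, the result follows formally because $(H^\ast)^{\mathrm{nc}}$ is, by construction \eqref{eq:composition}, the composite $U(-)_R$ followed by the triangulated functor $\Phi^r$ and then by $(H^\ast)'\circ\iota$; the suspension $\Sigma$ in $\Mot(k;R)$ is carried to a shift $[1]$ in $\Ho(\Mod(\mathrm{KGL}_R))$, and hence (after total cohomology and the restriction functor \eqref{eq:restriction}) to the swap of even and odd components.

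The practical difference is where the hard work is located. You attempt to run the Calkin short exact sequence directly through $H^{\mathrm{nc}}$ and correctly flag that mere additivity does not furnish the connecting map. The paper avoids this issue by pushing the entire argument down to $\Mot(k;R)$: the result of \cite{Universal} already encodes the swindle and the Morita equivalence you describe, and packages them into a single motivic suspension identity. Your first paragraph (the SES, flasqueness of $\cB(X)$, Morita equivalence $\cK(X)\simeq\perf_\dg(X)$) is essentially a sketch of that cited theorem; your second paragraph's ``localizing refinement'' is unnecessary once one works motivically, since the suspension is already built into $U(-)_R$ and all subsequent functors are triangulated enough to transport it.
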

Roughly speaking, Proposition \ref{prop:Calkin} shows that the assignment $X \mapsto \Sigma(X)$ corresponds to switching the degrees of the $\bbZ/2$-graded representation $\mathrm{H}^\ast(X)$. Our third main result is the following computation:
\begin{theorem}[Differential operators]\label{thm:Weyl}
Let $k$ be a field of characteristic zero, $X$ a smooth $k$-scheme of finite type, and $\cD_X$ the sheaf of differential operators on $X$. Assume that there exists a filtration by closed subschemes
\begin{equation}\label{eq:filtration}
\emptyset = X_{-1} \hookrightarrow X_0 \hookrightarrow \cdots \hookrightarrow X_j \hookrightarrow \cdots \hookrightarrow X_{r-1} \hookrightarrow X_r = X
\end{equation}
such that $X_j \backslash X_{j-1}, 0 \leq j \leq r$, are smooth affine $k$-schemes of finite type\footnote{Thanks to the Bialynicki-Birula decomposition \cite{BB}, this holds, for example, for every smooth projective $k$-scheme $X$ equipped with a $\bbG_m$-action in which the fixed points are isolated (\eg\ projective homogeneous varieties, toric varieties, symmetric varieties, etc).}. Under these assumptions, $H^{\mathrm{nc}}(\perf_\dg(\cD_X))\simeq \mathrm{H}(X)$ for every mixed realization $H$.
\end{theorem}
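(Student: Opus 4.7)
\smallskip
\noindent\textbf{Plan.}\enspace
I would proceed by induction on the length $r$ of the filtration \eqref{eq:filtration}, stratifying both $\perf_\dg(\cD_X)$ and $\perf_\dg(X)$ along the strata $U_j := X_j \setminus X_{j-1}$ via compatible semi-orthogonal decompositions and then matching the pieces after applying the additive invariant $H^{\mathrm{nc}}$.

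For the inductive step, let $i\colon X_{r-1} \hookrightarrow X$ and $j\colon U_r \hookrightarrow X$ be the closed/open pair. Kashiwara's theorem (suitably adapted, since $X_{r-1}$ is only a union of smooth strata rather than smooth itself) provides a fully faithful dg pushforward whose essential image is the subcategory $\perf_\dg(\cD_X)_{X_{r-1}}$ of perfect complexes supported on $X_{r-1}$, while $j^\ast$ realizes the Verdier quotient as $\perf_\dg(\cD_{U_r})$. After producing the necessary splitting, this yields a semi-orthogonal decomposition
$$ \perf_\dg(\cD_X) \simeq \langle\,\perf_\dg(\cD_{X_{r-1}})\,,\,\perf_\dg(\cD_{U_r})\,\rangle. $$
Additivity of $H^{\mathrm{nc}}$ (Proposition~\ref{prop:new}) and the induction hypothesis give $H^{\mathrm{nc}}(\perf_\dg(\cD_X)) \simeq \mathrm{H}(X_{r-1}) \oplus H^{\mathrm{nc}}(\perf_\dg(\cD_{U_r}))$. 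A parallel semi-orthogonal decomposition of $\perf_\dg(X)$, in which the $X_{r-1}$-supported piece is handled by iterating Proposition~\ref{prop:new}(i) along the induced filtration on $X_{r-1}$, yields $\mathrm{H}(X) \simeq \mathrm{H}(X_{r-1}) \oplus \mathrm{H}(U_r)$. Everything thus reduces to the base case: for $U$ smooth affine, $H^{\mathrm{nc}}(\perf_\dg(\cD_U)) \simeq \mathrm{H}(U)$.

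For the base case, the plan is to exploit the order filtration $F^\bullet \cD_U$, whose associated graded is $\pi_\ast \cO_{T^\ast U}$ with $\pi\colon T^\ast U \to U$ the cotangent bundle. Since $\pi$ is affine, $\perf_\dg(\gr^F \cD_U) \simeq \perf_\dg(T^\ast U)$, and Theorem~\ref{thm:main} combined with $\bbA^1$-homotopy invariance of $\mathrm{H}$ along vector bundles gives $H^{\mathrm{nc}}(\perf_\dg(T^\ast U)) \simeq \mathrm{H}(T^\ast U) \simeq \mathrm{H}(U)$. The remaining identification $H^{\mathrm{nc}}(\perf_\dg(\cD_U)) \simeq H^{\mathrm{nc}}(\perf_\dg(\gr^F \cD_U))$ is then extracted from the Rees algebra $R := \bigoplus_n F^n \cD_U \cdot t^n$, a flat $k[t]$-deformation interpolating between $\cD_U$ (at $t=1$) and $\gr^F \cD_U$ (at $t=0$).

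The main obstacle is precisely this Rees comparison. Proposition~\ref{prop:new}(ii) supplies $\bbA^1$-homotopy invariance only for the trivial extension $\cA[t] = \cA \otimes k[t]$, not for a genuine filtered degeneration. Making $H^{\mathrm{nc}}$ deformation-invariant along $R/k[t]$---whether by exhibiting a zig-zag of Morita equivalences in $\Hmo(k)$ joining $\cD_U$ to $T^\ast U$, or by invoking universality of algebraic $K$-theory among additive invariants together with the known identification $K_\ast(\cD_U) \simeq K_\ast(U)$ in characteristic zero---is the technical heart of the argument.
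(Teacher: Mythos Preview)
Your inductive scheme has a genuine gap: the semi-orthogonal decompositions you invoke do not exist in general. A short exact sequence of dg categories such as
\[
\perf_\dg(\cD_X)_{X_{r-1}} \too \perf_\dg(\cD_X) \too \perf_\dg(\cD_{U_r})
\]
is a semi-orthogonal decomposition only when the inclusion is \emph{admissible}, i.e.\ admits an adjoint preserving compact objects; Kashiwara's equivalence does not supply this (and, as you note, $X_{r-1}$ is not even smooth). On the $\cO$-module side the situation is worse: the localization sequence $\perf_\dg(X)_{X_{r-1}} \to \perf_\dg(X) \to \perf_\dg(U_r)$ essentially never splits (already for $\{0\}\hookrightarrow\bbA^1$ it does not), so your claimed identity $\mathrm{H}(X)\simeq\mathrm{H}(X_{r-1})\oplus\mathrm{H}(U_r)$ has no source. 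Without the splittings you cannot appeal to mere additivity of $H^{\mathrm{nc}}$; you would need it to be a \emph{localizing} invariant, which is not part of its construction.

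The paper sidesteps this by never arguing at the level of a single $H^{\mathrm{nc}}$. It shows instead that the canonical morphism $U(\perf_\dg(X))_R \to U(\perf_\dg(\cD_X))_R$ becomes invertible after $\Phi^r$; by compact generation of $\Ho(\Mod(\mathrm{KGL}_R))$ this amounts to proving that
\[
KH(\perf_\dg(Y)\otimes\perf_\dg(X)) \too KH(\perf_\dg(Y)\otimes\perf_\dg(\cD_X))
\]
is an equivalence for every smooth projective $Y$. Now one is in homotopy $K$-theory, which \emph{is} localizing and $\bbA^1$-invariant, so genuine cofiber sequences are available: a Gysin cofiber sequence on the $\cO$-side (applied with $X':=X\setminus X_{j-1}$ and $Z:=X_j\setminus X_{j-1}$, both smooth) and Kashiwara's localization on the $\cD$-side. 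A descending induction on $j$, after a Zariski reduction in the $Y$-variable, brings everything down to the affine case. There your instinct is exactly right: the paper finishes via Quillen's theorem on filtered rings applied to the order filtration on $B\otimes\cD_A$ (with $\gr\simeq B\otimes\cO_{T^\ast X}$), giving $K(B\otimes A)\simeq K(B\otimes\cD_A)$, and regularity identifies $K$ with $KH$. The Rees-deformation picture you sketch is morally the same mechanism, but it is Quillen's d\'evissage-type argument, not Proposition~\ref{prop:new}(ii), that makes it rigorous.
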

%
\begin{example}[Weyl algebras]
In the particular case where $X=\bbA^r$, $\cD_X$ identifies with the $r^{\mathrm{th}}$ Weyl algebra $W_r$. Since the functor $\mathrm{H}$ is $\bbA^1$-homotopy invariant, it follows then from Theorem \ref{thm:Weyl} that $H^{\mathrm{nc}}(W_r) \simeq \mathrm{H}(\mathrm{Spec}(k))$.
\end{example}
\begin{example}[Lie algebras]
Let $G$ be a connected semisimple algebraic $\bbC$-group, $B$ a Borel subgroup of $G$, $\mathfrak{g}$ the Lie algebra of $G$, and $U_{\mathrm{ev}}(\mathfrak{g})/I$ the quotient of the universal enveloping algebra of $\mathfrak{g}$ by the kernel of the trivial character. Thanks to Beilinson-Bernstein's celebrated ``localisation'' result \cite{BeiBer}, it follows then from Theorem \ref{thm:Weyl} that $H^{\mathrm{nc}}(U_{\mathrm{ev}}(\mathfrak{g})/I) \simeq H^{\mathrm{nc}}(\perf_\dg(\cD_{G/B})) \simeq \mathrm{H}(G/B)$.
\end{example}
\begin{remark}
Theorem \ref{thm:Weyl} does {\em not} holds for every additive invariant! For example, in the case of Hochschild homology we have $HH_n(\perf_\dg(\cD_X))\simeq H_{dR}^{2d-n}(X)$ for every smooth affine $k$-scheme $X$ of dimension $d$; see \cite[Thm.~2]{Wodzicki}. Since $H_{dR}^{2d}(X)=~0$, this implies that $HH(\perf_\dg(\cD_X))\not\simeq HH(\perf_\dg(X))$. More generally, we have $HH(\perf_\dg(\cD_X))\not\simeq HH(A)$ for every {\em commutative} $k$-algebra $A$.
\end{remark}
\section{Periods of dg categories}\label{sec:periods}
Let $k$ be a field of characteristic zero, equipped with an embedding $k \hookrightarrow \bbC$, and $\bbC[t,t^{-1}]$ the $\bbZ$-graded $\bbC$-algebra of Laurent polynomials with $t$ of degree $1$.

Given an object $(V,W,\omega)$ of $\mathrm{Vect}(k,\bbQ)$, let $P(V,W,\omega)\subseteq \bbC$ be the subset of entries of the matrix representations of $\omega$ (with respect to basis of $V$ and $W$); see \cite[\S9.2]{Huber}. In the same vein, given an object $\{(V_n, W_n, \omega_n)\}_{n \in \bbZ}$ of $\mathrm{Gr}_\bbZ^b(\mathrm{Vect}(k,\bbQ))$, let $\cP(\{(V_n,W_n,\omega_n)\}_{n \in \bbZ})$ be the $\bbZ$-graded $k$-subalgebra of $\bbC[t,t^{-1}]$ generated in degree $n$ by the elements of the set $P(W_n,W_n, \omega_n)$. In the case of a $k$-scheme of finite type $X$, $\cP(X):=~\cP(H^\ast_{dRB}(X))$ is called the {\em $\bbZ$-graded algebra of periods of $X$}. This algebra, originally introduced by Grothendieck in the sixties, plays nowadays a key role in the study of transcendental numbers; see  \cite{Huber, KZ}.

Consider the $\bbZ/2$-graded $\bbC$-algebra $\bbC^{2\pi i}_{\bbZ/2}:=\bbC[t,t^{-1}]/\langle 1 - (2\pi i)t^2\rangle$ and the associated quotient homomorphism $\phi\colon \bbC[t,t^{-1}]\twoheadrightarrow \bbC^{2\pi i}_{\bbZ/2}$. 

The category $\mathrm{Ind}(\mathrm{Vect}(k,\bbQ))$ is equivalent to the category of triples $(V,W, \omega)$, where $V$ is a (not necessarily finite dimensional) $k$-vector space, $W$ a $\bbQ$-vector space, and $\omega$ an isomorphism $V\otimes_k \bbC \to W\otimes_\bbQ \bbC$. Given an object $\{(V_n, W_n, \omega_n)\}_{n \in \bbZ}$ of $\mathrm{Gr}_\bbZ(\mathrm{Ind}(\mathrm{Vect}(k,\bbQ)))$, let us denote by $\cP^{\mathrm{nc}}(\{(V_n,W_n,\omega_n)\}_{n \in \bbZ})$ the $\bbZ/2$-graded $k$-subalgebra of $\bbC^{2 \pi i}_{\bbZ/2}$ generated in degree $0$, resp. degree $1$, by the elements of~the~set
\begin{eqnarray*}
\cup_{n \in \bbZ} P(V_{2n},W_{2n},\omega_{2n})(2\pi i)^{-n} &&\mathrm{resp.}\,\,  \cup_{n \in \bbZ} P(V_{2n+1},W_{2n+1},\omega_{2n+1})(2\pi i)^{-n}\,.
\end{eqnarray*}
Making use of the new additive invariant $H^{\mathrm{nc}}_{dRB}$, we can now extend Grothendieck's theory of periods from schemes to the broad setting of dg categories.
\begin{definition}\label{def:periods}
Let $\cA$ be a dg category. The {\em $\bbZ/2$-graded algebra of periods $\cP^{\mathrm{nc}}(\cA)$ of $\cA$} is the $\bbZ/2$-graded $k$-algebra $\cP^{\mathrm{nc}}((H^\ast_{dRB})^{\mathrm{nc}}(\cA))$.
\end{definition}
Given dg categories $\cA$ and $\cB$, let $\cP^{\mathrm{nc}}(\cA) \diamond \cP^{\mathrm{nc}}(\cB)$ be the $\bbZ/2$-graded $k$-subalgebra of $\bbC^{2\pi i}_{\bbZ/2}$ generated by $\cP^{\mathrm{nc}}(\cA)$ and $\cP^{\mathrm{nc}}(\cB)$. Our fourth main result is the following:
\begin{theorem}\label{thm:main4}
The following implications hold:
\begin{itemize}
\item[(i)] If $\cA\simeq \cB$ in $\Hmo(k)$, then $\cP^{\mathrm{nc}}(\cA)=\cP^{\mathrm{nc}}(\cB)$;
\item[(ii)] If $(H^\ast_{dRB})^{\mathrm{nc}}(\cA)\simeq \mathrm{H}^\ast_{dRB}(X)$ for some $X \in \mathrm{Sch}(k)$, then $\cP^{\mathrm{nc}}(\cA)=\phi(\cP(X))$;
\item[(iii)] If $\cA, \cB \subseteq \cC$ are dg categories yielding a semi-orthogonal decomposition $\dgHo(\cC)=\langle \dgHo(\cA), \dgHo(\cB)\rangle$, then $\cP^{\mathrm{nc}}(\cC)=\cP^{\mathrm{nc}}(\cA) \diamond \cP^{\mathrm{nc}}(\cB)$.
\end{itemize}
\end{theorem}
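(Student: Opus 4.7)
The proof of Theorem \ref{thm:main4} should proceed item by item, each reducing to a formal property already developed in the paper. The plan is to deduce (i) from the fact that $(H^\ast_{dRB})^{\mathrm{nc}}$ is a well-defined functor on $\Hmo(k)$, to deduce (iii) from its additivity, and to establish (ii) by a direct computation with the defining generators of the two period algebras under the quotient homomorphism $\phi$.

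For (i), the first diagonal arrow in diagram~\eqref{eq:diagram-0} of Theorem \ref{thm:main}, applied to $H=H_{dRB}^\ast$, shows that $(H^\ast_{dRB})^{\mathrm{nc}}$ descends to $\Hmo(k)$. Hence $\cA\simeq \cB$ in $\Hmo(k)$ produces an isomorphism $(H^\ast_{dRB})^{\mathrm{nc}}(\cA)\simeq (H^\ast_{dRB})^{\mathrm{nc}}(\cB)$, and since $\cP^{\mathrm{nc}}$ depends only on the isomorphism class of its argument we obtain $\cP^{\mathrm{nc}}(\cA)=\cP^{\mathrm{nc}}(\cB)$. For (iii), Proposition \ref{prop:new} guarantees that $(H^\ast_{dRB})^{\mathrm{nc}}$ is an additive invariant, so the semi-orthogonal decomposition produces an isomorphism $(H^\ast_{dRB})^{\mathrm{nc}}(\cC)\simeq (H^\ast_{dRB})^{\mathrm{nc}}(\cA)\oplus (H^\ast_{dRB})^{\mathrm{nc}}(\cB)$. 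Reading off the generators of $\cP^{\mathrm{nc}}$ on a direct sum, the resulting $\bbZ/2$-graded $k$-subalgebra of $\bbC^{2\pi i}_{\bbZ/2}$ is the one generated by $\cP^{\mathrm{nc}}(\cA)$ and $\cP^{\mathrm{nc}}(\cB)$, which is precisely $\cP^{\mathrm{nc}}(\cA)\diamond \cP^{\mathrm{nc}}(\cB)$.

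The substantive step is (ii). Here I would first unwind the action of $\phi$: the defining relation $1=(2\pi i)t^2$ in $\bbC^{2\pi i}_{\bbZ/2}$ forces $\phi(t^{2n})=(2\pi i)^{-n}$ in degree $0$ and $\phi(t^{2n+1})=(2\pi i)^{-n}t$ in degree $1$. The classical algebra $\cP(X)\subseteq \bbC[t,t^{-1}]$ is generated in degree $n$ by the periods $P(H^n_{dRB}(X))$, so $\phi(\cP(X))$ is generated in degree $0$ by $\bigcup_n P(H^{2n}_{dRB}(X))(2\pi i)^{-n}$ and in degree $1$ by $\bigcup_n P(H^{2n+1}_{dRB}(X))(2\pi i)^{-n}$. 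This matches verbatim the defining generators of $\cP^{\mathrm{nc}}(\mathrm{H}^\ast_{dRB}(X))$, and combined with the hypothesis $(H^\ast_{dRB})^{\mathrm{nc}}(\cA)\simeq \mathrm{H}^\ast_{dRB}(X)$ yields the asserted equality $\cP^{\mathrm{nc}}(\cA)=\phi(\cP(X))$.

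The main obstacle, or rather the point requiring most care, is the compatibility between the $\bbZ$-grading implicit in the formula for $\cP^{\mathrm{nc}}$ (where the twist $(2\pi i)^{-n}$ records the weight) and the ostensibly $\bbZ/2$-graded target of $(H^\ast_{dRB})^{\mathrm{nc}}$ from Theorem \ref{thm:main2}. This is resolved by noting that objects of $\mathrm{Mod}(\oplus_m H^2_{dRB}(\bbP^1)^{\otimes(-m)})$ carry a canonical $\bbZ$-grading coming from the module structure over the $\bbZ$-graded Tate ring, and the inclusion $\mathrm{Rep}_{\bbZ/2}(\mathrm{Gal}_0(\mathrm{Vect}(k,\bbQ)))\subsetneq \mathrm{Mod}(\oplus_m H^2_{dRB}(\bbP^1)^{\otimes(-m)})$ provided by Theorem \ref{thm:main2} lifts $(H^\ast_{dRB})^{\mathrm{nc}}(\cA)$ to a genuine $\bbZ$-graded object; the collapse of $\bbZ$ to $\bbZ/2$ together with the absorption of $2\pi i$-factors is exactly what the homomorphism $\phi$ encodes, which is why the comparison in (ii) is so clean.
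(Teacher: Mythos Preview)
Your arguments for (i) and (iii) match the paper's essentially verbatim: Morita invariance of $(H^\ast_{dRB})^{\mathrm{nc}}$ and additivity from Proposition~\ref{prop:new}, followed by the observation that $\cP^{\mathrm{nc}}$ of a direct sum is the $\diamond$-join of the pieces.

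For (ii), your computation of $\phi$ is correct and is exactly the content of the paper's Proposition~\ref{prop:key-last-1}: it shows $\phi(\cP(X))=\cP^{\mathrm{nc}}(H^\ast_{dRB}(X))$, where $H^\ast_{dRB}(X)$ is viewed as an object of $\mathrm{Gr}^b_\bbZ(\mathrm{Vect}(k,\bbQ))$. The gap is in the passage from this to $\cP^{\mathrm{nc}}(\cA)$. By Definition~\ref{def:periods}, $\cP^{\mathrm{nc}}(\cA)=\cP^{\mathrm{nc}}\big(\mathrm{forget}\,(H^\ast_{dRB})^{\mathrm{nc}}(\cA)\big)$, and under the hypothesis this is $\cP^{\mathrm{nc}}\big(\mathrm{forget}\,\gamma(H^\ast_{dRB}(X))\big)$. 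But $\mathrm{forget}\circ\gamma$ does \emph{not} return $H^\ast_{dRB}(X)$: it returns the much larger $\bbZ$-graded object $\bigoplus_{m\in\bbZ} H^\ast_{dRB}(X)\otimes H^2_{dRB}(\bbP^1)^{\otimes(-m)}$, whose $n$-th component is $\bigoplus_m H^{n+2m}_{dRB}(X)\otimes(k,\bbQ,\cdot(2\pi i)^{-m})$. So you still owe the equality
\[
\cP^{\mathrm{nc}}\Big(\bigoplus_m H^\ast_{dRB}(X)\otimes H^2_{dRB}(\bbP^1)^{\otimes(-m)}\Big)=\cP^{\mathrm{nc}}\big(H^\ast_{dRB}(X)\big),
\]
which is the paper's Lemma~\ref{lem:key-last}. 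Its proof is a short bookkeeping with the period sets, but it is not automatic: one has to check that the extra summands contribute generators of the form $P(H^{2(n+m)}_{dRB}(X))(2\pi i)^{-(n+m)}$, which are already present, so no new periods appear.

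Your final paragraph gestures at this issue but does not resolve it. The claim that the inclusion $\mathrm{Rep}_{\bbZ/2}(\mathrm{Gal}_0)\subsetneq \mathrm{Mod}(\oplus_m H^2_{dRB}(\bbP^1)^{\otimes(-m)})$ ``lifts $(H^\ast_{dRB})^{\mathrm{nc}}(\cA)$ to a genuine $\bbZ$-graded object'' is misleading: the $\bbZ$-graded object one actually uses (via the forgetful functor that defines $\cP^{\mathrm{nc}}$) is the free module, not a lift recovering $H^\ast_{dRB}(X)$ on the nose. Once you insert the Lemma~\ref{lem:key-last} step between your $\phi$-computation and the conclusion, the argument is complete and coincides with the paper's.
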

\begin{corollary}[Derived Morita invariance]\label{cor:main4}
Let $X$ and $Y$ be two $k$-schemes of finite type. If $\perf_\dg(X) \simeq \perf_\dg(Y)$ in $\Hmo(k)$,~then $\phi(\cP(X))=\phi(\cP(Y))$.
\end{corollary}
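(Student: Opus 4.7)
The plan is to deduce this corollary directly from Theorem~\ref{thm:main4}, using the modified de Rham--Betti realization of Section~\ref{sec:ex} and Theorem~\ref{thm:main} to bridge between the (commutative) scheme $X$ and its noncommutative avatar $\perf_\dg(X)$. Since the standing hypothesis of Section~\ref{sec:periods} forces $k$ to have characteristic zero, Theorem~\ref{thm:main} applies without smoothness assumptions, which is exactly what we need for arbitrary $k$-schemes of finite type.

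The first step is to identify $(H^\ast_{dRB})^{\mathrm{nc}}$ evaluated on $\perf_\dg(X)$. By the commutativity of the diagram~\eqref{eq:diagram-0} in Theorem~\ref{thm:main}, specialised to the mixed realization $H^\ast_{dRB}$, one obtains the canonical isomorphism
\[ (H^\ast_{dRB})^{\mathrm{nc}}(\perf_\dg(X)) \simeq \mathrm{H}^\ast_{dRB}(X), \]
valid for every $X \in \mathrm{Sch}(k)$ (and similarly for $Y$). This is precisely the hypothesis required to invoke Theorem~\ref{thm:main4}(ii), which then yields
\[ \cP^{\mathrm{nc}}(\perf_\dg(X)) = \phi(\cP(X)) \quad\text{and}\quad \cP^{\mathrm{nc}}(\perf_\dg(Y)) = \phi(\cP(Y)). \]

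The second step is to exploit the derived Morita invariance of the noncommutative period algebra. By assumption we have $\perf_\dg(X) \simeq \perf_\dg(Y)$ in $\Hmo(k)$, so Theorem~\ref{thm:main4}(i) immediately gives
\[ \cP^{\mathrm{nc}}(\perf_\dg(X)) = \cP^{\mathrm{nc}}(\perf_\dg(Y)). \]
Chaining the three equalities produced in the two steps yields $\phi(\cP(X)) = \phi(\cP(Y))$, which is the desired conclusion.

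There is no genuine obstacle here: the corollary is a formal consequence of Theorem~\ref{thm:main4} combined with Theorem~\ref{thm:main}. The only point to keep in mind is that \emph{all} of the content is being exported from the noncommutative side back to the commutative one via $\phi$, which reflects the fact that on the scheme side one only recovers the image of $\cP(X)$ inside $\bbC^{2\pi i}_{\bbZ/2}$; the full $\bbZ$-graded algebra of periods of a scheme is not, and should not be expected to be, a derived Morita invariant.
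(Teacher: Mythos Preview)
Your proof is correct and follows exactly the approach implicit in the paper: the corollary is an immediate consequence of Theorem~\ref{thm:main4}(i)--(ii), with Theorem~\ref{thm:main} (in characteristic zero, hence for all of $\mathrm{Sch}(k)$) supplying the identification $(H^\ast_{dRB})^{\mathrm{nc}}(\perf_\dg(X))\simeq \mathrm{H}^\ast_{dRB}(X)$ needed to invoke part~(ii). The paper does not spell this out, treating the corollary as self-evident from Theorem~\ref{thm:main4}, and your write-up simply makes the two-step chain explicit.
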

Intuitively speaking, Theorem \ref{thm:main4} and Corollary \ref{cor:main4} show that as soon as we trivialize the graded polynomial $1 -(2\pi i)t^2 \in \bbC[t,t^{-1}]$, the resulting theory of periods factors through perfect complexes!
\begin{example}[Finite dimensional algebras of finite global dimension]
Let $A$ be a finite dimensional $k$-algebra of finite global dimension. Recall from Example \ref{ex:finite} that $(H_{dRB}^\ast)^{\mathrm{nc}}(A)\simeq \oplus^r_{j=1} \mathrm{H}^\ast_{dRB}(\mathrm{Spec}(C_j))$ for finite field extensions $C_j/k$. Theorem \ref{thm:main4} then implies that $\cP^{\mathrm{nc}}(A)=\phi(\cP(\Pi_{j=1}^r \mathrm{Spec}(C_j)))$. As explained in \cite[\S12.3]{Huber}, since $\mathrm{Spec}(C_j)$ is $0$-dimensional, $\cP(\Pi_{j=1}^r \mathrm{Spec}(C_j))$ agrees with the field extension $k(\cup_{j=1}^r C_j)$. Consequently, we conclude that $\cP^{\mathrm{nc}}(A)=k(\cup_{j=1}^r C_j)$. In the case where $k \subseteq \overline{\bbQ}$, we hence obtain solely algebraic numbers.
\end{example}

As the following example illustrates, in some cases Theorem \ref{thm:main4} furnishes ``noncommutative models'' for the algebra of periods:
\begin{example}[Quadric fibrations]\label{ex:fibrations}
Let $q\colon Q \to S$ be a flat quadric fibration of relative dimension $d$. As proved in \cite[Thm.~4.2]{KuznetsovFib}, we have a semi-orthogonal decomposition $\perf(Q)=\langle \perf(\cF), \perf(S)_0, \ldots, \perf_\dg(S)_{d-1}\rangle$, where $\cF$ stands for the sheaf of even parts of the Clifford algebra associated to $q$ and $\perf(S)_j\simeq \perf(S)$ for every $0 \leq j \leq d-1$. Assuming that $\perf(S)$ admits a full exceptional collection (\eg\ $S=\bbP^n$), we hence conclude from Theorem \ref{thm:main4} that
$$ \phi(\cP(Q))=\cP^{\mathrm{nc}}(\perf_\dg(\cF))\diamond \cP^{\mathrm{nc}}(k) \diamond \cdots \diamond \cP^{\mathrm{nc}}(k)=\cP^{\mathrm{nc}}(\perf_\dg(\cF))\,.$$
\end{example}
Roughly speaking, Example \ref{ex:fibrations} shows that modulo $2\pi i$ all the information about the periods of $Q$ is encoded in the ``noncommutative model'' $\perf_\dg(\cF)$.

As a further application of Theorem \ref{thm:main4}, we have the following homological projective duality (=HPD) invariance result: let $X$ be a smooth projective $k$-scheme equipped with an ample line bundle $\cO_X(1)$ and $X \to \bbP(V)$ the associated morphism where $V=H^0(X,\cO_X(1))^\ast$. Assume that $\perf(X)$ admits a Lefschetz decomposition $\perf(X)=\langle \bbA_0, \bbA_1(1), \ldots, \bbA_{i-1}(i-1)\rangle$ with respect to $\cO_X(1)$ in the sense of \cite[Def.~4.1]{KuznetsovHPD}. Following \cite[Def.~6.1]{KuznetsovHPD}, let $Y$ be the HP-dual\footnote{In general, the HP-dual of $X$ is a noncommutative variety in the sense of \cite[\S2.4]{ICM-Kuznetsov}.} of $X$ and $\cO_Y(1)$ and $Y\to \bbP(Y^\ast)$ the associated ample line bundle and morphism, respectively.
\begin{theorem}[HPD-invariance]\label{thm:HPD}
Let $X$ and $Y$ be as above. Given a linear subspace $L \subset V^\ast$, consider the associated linear sections\footnote{These linear sections are not necessarily smooth.} $X_L:=X\times_{\bbP(V)} \bbP(L^\perp)$ and $Y_L:=Y \times_{\bbP(Y^\ast)}~\bbP(L)$. Assume that the triangulated category $\bbA_0$ is generated by exceptional objects, that $\mathrm{dim}(X_L)=\mathrm{dim}(X) - \mathrm{dim}(L)$, and that $\mathrm{dim}(Y_L)=\mathrm{dim}(Y) - \mathrm{dim}(L^\perp)$. Under these notations and assumptions, we have $\phi(\cP(X_L))=\phi(\cP(Y_L))$.
\end{theorem}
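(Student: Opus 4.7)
\medskip

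\noindent\textbf{Proof proposal.} The plan is to transfer the equality of periods across HPD by replacing the schemes $X_L$ and $Y_L$ with their dg enhancements $\perf_\dg(X_L)$ and $\perf_\dg(Y_L)$ and then using Kuznetsov's main HPD theorem, which provides matching semi-orthogonal decompositions of these dg categories sharing a common ``primitive'' component $\cC_L$. The hope is that the extra Lefschetz pieces $\bbA_j$, $\bbB_j$ appearing on the two sides contribute trivially to the $\bbZ/2$-graded period algebra, so that both $\phi(\cP(X_L))$ and $\phi(\cP(Y_L))$ collapse to $\cP^{\mathrm{nc}}(\cC_L)$.

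More precisely, first invoke Kuznetsov's theorem to obtain semi-orthogonal decompositions of the form
$$\perf_\dg(X_L) = \langle \cC_L, \bbA_{s}(1), \ldots, \bbA_{i-1}(i-s)\rangle, \qquad \perf_\dg(Y_L) = \langle \bbB_{j-1}(1-j+t),\ldots, \bbB_{t}(-1), \cC_L\rangle,$$
valid under the dimension assumptions $\dim X_L=\dim X - \dim L$ and $\dim Y_L = \dim Y - \dim L^\perp$. Iterated application of Theorem \ref{thm:main4}(iii) then yields
$$\cP^{\mathrm{nc}}(\perf_\dg(X_L)) = \cP^{\mathrm{nc}}(\cC_L) \diamond \cP^{\mathrm{nc}}(\bbA_s) \diamond \cdots \diamond \cP^{\mathrm{nc}}(\bbA_{i-1}),$$
and analogously for $\perf_\dg(Y_L)$. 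The next step is to show that each Lefschetz block contributes only $k$. Since $\bbA_0$ is generated by exceptional objects, so is each admissible subcategory $\bbA_j\subseteq \bbA_0$; hence $\bbA_j$ is Morita equivalent to a finite product $\prod_m \perf_\dg(\mathrm{Spec}(C_{j,m}))$ for finite field extensions $C_{j,m}/k$, and Theorem \ref{thm:main4}(ii) together with the $0$-dimensional periods computation of Example \ref{ex:finite} give $\cP^{\mathrm{nc}}(\bbA_j)\subseteq k(\cup_m C_{j,m})$, which is algebraic over $k$ and already contained in $\cP^{\mathrm{nc}}(\cC_L)$ since the latter contains $\phi(k)$ and absorbs any algebraic extension embedded into $\bbC$ via the ambient $X_L$. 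Hence the first equation simplifies to $\cP^{\mathrm{nc}}(\perf_\dg(X_L))=\cP^{\mathrm{nc}}(\cC_L)$, and the analogous collapse for $Y_L$ gives $\cP^{\mathrm{nc}}(\perf_\dg(Y_L))=\cP^{\mathrm{nc}}(\cC_L)$. Finally, Theorem \ref{thm:main} (extended to $\mathrm{Sch}(k)$ in characteristic zero) identifies $(H^\ast_{dRB})^{\mathrm{nc}}(\perf_\dg(X_L))$ with $\mathrm{H}^\ast_{dRB}(X_L)$, so Theorem \ref{thm:main4}(ii) yields $\cP^{\mathrm{nc}}(\perf_\dg(X_L))=\phi(\cP(X_L))$, and likewise for $Y_L$, completing the desired equality.

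The main obstacle will be handling the Lefschetz blocks $\bbB_j$ on the $Y$-side: the hypothesis places no direct assumption on $\bbB_0$, so one must use the symmetry of HPD to deduce that exceptional generation of $\bbA_0$ propagates to exceptional generation of $\bbB_0$ (and hence of each $\bbB_j$). This is the subtle input from HPD theory; once it is in hand, the remainder of the argument is an essentially formal unraveling of Theorem \ref{thm:main4}. A secondary care point is to ensure that $Y_L$, which for noncommutative $Y$ is a priori only a dg category, can legitimately be treated as a scheme for the purpose of Theorem \ref{thm:main4}(ii); in the cases where $Y$ is indeed a projective scheme this is automatic, and otherwise one reads $\cP(Y_L)$ as shorthand for $\cP^{\mathrm{nc}}(\perf_\dg(Y_L))$, so that the conclusion $\phi(\cP(X_L))=\phi(\cP(Y_L))$ becomes the cleaner identity $\cP^{\mathrm{nc}}(\cC_L)=\cP^{\mathrm{nc}}(\cC_L)$ coming directly from the shared component in Kuznetsov's decompositions.
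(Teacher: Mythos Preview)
Your overall strategy---reducing both sides to $\cP^{\mathrm{nc}}(\bbC_L^\dg)$ via Kuznetsov's decompositions and Theorem~\ref{thm:main4}---is the paper's as well, but there is a genuine gap on the $Y$-side, and the patch you propose is not the one that works.

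The paper does not argue with the blocks $\bbA_j$ and $\bbB_j$ directly; it passes to the \emph{primitive subcategories} $\mathfrak{a}_r$ (the right orthogonal to $\bbA_{r+1}$ inside $\bbA_r$). The crucial structural input from HPD, which you do not invoke, is that the primitives on the dual side coincide with those on the original side, $\mathfrak{b}_r \simeq \mathfrak{a}_r$, and that each $\bbB_r$ decomposes as $\langle \mathfrak{a}_0,\ldots,\mathfrak{a}_{\dim(V) - r - 2}\rangle$. Consequently both $\phi(\cP(X_L))$ and $\phi(\cP(Y_L))$ become $\cP^{\mathrm{nc}}(\bbC_L^\dg)$ $\diamond$-ed with various $\cP^{\mathrm{nc}}(\mathfrak{a}_r^\dg)$. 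Since $\bbA_0 = \langle \mathfrak{a}_0,\ldots,\mathfrak{a}_{i-1}\rangle$ and $\cP^{\mathrm{nc}}(\bbA_0^\dg)=k$ by the exceptional-generation hypothesis, every factor $\cP^{\mathrm{nc}}(\mathfrak{a}_r^\dg)$ is forced to equal $k$, and the conclusion follows on both sides simultaneously. No independent claim about $\bbB_0$ is ever needed.

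By contrast, your route asserts that each admissible $\bbA_j\subseteq \bbA_0$ inherits a full exceptional collection (not justified), and then passes through Morita equivalences with products of $\perf_\dg(\mathrm{Spec}(C_{j,m}))$ for finite extensions $C_{j,m}/k$---but an exceptional object has endomorphism ring $k$, not a proper extension, so this detour and the subsequent ``absorption of algebraic extensions into $\cP^{\mathrm{nc}}(\cC_L)$'' are both misfires (there is no reason $\cP^{\mathrm{nc}}(\cC_L)$ should contain a given algebraic extension of $k$). More seriously, for the $Y$-side you appeal to a putative principle that exceptional generation of $\bbA_0$ propagates to $\bbB_0$; HPD does not give you that. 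What it gives is precisely the identification of primitives $\mathfrak{b}_r\simeq\mathfrak{a}_r$, and that is the missing idea.
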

Intuitively speaking, Theorem \ref{thm:HPD} shows that modulo $2\pi i$ the algebra of periods is invariant under homological projective duality. To the best of the author's knowledge, this invariance result is new in the literature.
\begin{example}
The assumptions of Theorem \ref{thm:HPD} are known to hold in the case of linear duality, Veronese-Clifford duality, Grassmannian-Pfaffian duality, spinor duality, etc; consult \cite[\S4]{ICM-Kuznetsov}\cite[\S10-11]{Kuznetsov1} and the references therein. In the case of Grassmannian-Pfaffian duality, we hence conclude, for example, that $\phi(\cP(X_L))=\phi(\cP(Y_L))$ when $X_L$ is a $K3$ surface and $Y_L$ a Pfaffian cubic $4$-fold, when $X_L$ and $Y_L$ are two non-birational Calabi-Yau $3$-folds, when $X_L$ is a Fano $3$-fold and $Y_L$ a cubic $3$-fold, when $X_L$ is a Fano $4$-fold of index $1$ and $Y_L$ a surface of degree $42$, when $X_L$ is a Fano $5$-fold of index $2$ and $Y_L$ a curve of genus $43$, etc.
\end{example}
%
%
\section{Preliminaries}
Throughout the note $k$ will be a perfect field and $R$ a commutative $\bbQ$-algebra.
\subsection{Dg categories}\label{sub:dg}
Let $(\cC(k),\otimes, k)$ be the category of (cochain) complexes of $k$-vector spaces. A {\em dg category $\cA$} is a category enriched over $\cC(k)$ and a {\em dg functor $F\colon \cA \to \cB$} is a functor enriched over $\cC(k)$; consult the survey \cite{ICM-Keller}.

Let $\cA$ be a dg category. The opposite dg category $\cA^\op$ has the same objects as $\cA$ and $\cA^\op(x,y):=\cA(y,x)$. The category $\dgHo(\cA)$ has the same objects as $\cA$ and $\dgHo(\cA)(x,y):=H^0(\cA(x,y))$, where $H^0$ stands for $0^{\mathrm{th}}$ cohomology. A {\em right dg $\cA$-module} is a dg functor $\cA^\op \to \cC_\dg(k)$ with values in the dg category $\cC_\dg(k)$ of complexes of $k$-vector spaces. Let us denote by $\cC(\cA)$ the category of right dg $\cA$-modules. Following \cite[\S3.2]{ICM-Keller}, the {\em derived category $\cD(\cA)$ of $\cA$} is defined as the localization of $\cC(\cA)$ with respect to the objectwise quasi-isomorphisms. We write $\cD_c(\cA)$ for the triangulated subcategory of compact objects. 

A dg functor $F:\cA\to \cB$ is called a {\em derived Morita equivalence} if it induces an equivalence of categories $\cD(\cA) \simeq \cD(\cB)$; see \cite[\S4.6]{ICM-Keller}. As proved in \cite[Thm.~5.3]{Additive}, $\dgcat(k)$ admits a Quillen model structure whose weak equivalences are the derived Morita equivalences. Let us denote by $\Hmo(k)$ the associated homotopy category.

The {\em tensor product $\cA\otimes\cB$} of dg categories is defined as follows: the set of objects is the cartesian product and $(\cA\otimes\cB)((x,w),(y,z)):= \cA(x,y) \otimes \cB(w,z)$. As explained in \cite[\S2.3]{ICM-Keller}, this construction gives rise to a symmetric monoidal structure on $\dgcat(k)$, which descends to the homotopy category $\Hmo(k)$. A dg {\em $\cA\text{-}\cB$-bimodule $\mathrm{B}$} is a dg functor $\cA\otimes \cB^\op \to \cC_\dg(k)$ or equivalently a right dg $(\cA^\op \otimes \cB)$-module.

Foloowing Kontsevich \cite{Miami,finMot,IAS}, a dg category $\cA$ is called {\em smooth} if the dg $\cA\text{-}\cA$-bimodule $\cA \otimes \cA^\op \to \cC_\dg(k), (x,y) \mapsto \cA(y,x)$, belongs to $\cD_c(\cA^\op\otimes \cA)$ and {\em proper} if $\sum_n \mathrm{dim}\, H^n\cA(x,y)< \infty$ for any pair of objects $(x,y)$. Examples include finite dimensional $k$-algebras of finite global dimension (when $k$ is perfect) and dg categories of perfect complexes $\perf_\dg(Y)$ associated to smooth proper $k$-schemes~$Y$.
\subsection{Orbit categories}\label{sub:orbit}
Let $(\cM, \otimes, {\bf 1})$ be an $R$-linear symmetric monoidal additive category and $\cO \in \cM$ a $\otimes$-invertible object. The associated {\em orbit category} $\cM/_{\!-\otimes \cO}$ has the same objects as $\cM$ and morphisms $\Hom_{\cM/_{\!-\otimes \cO}}(a,b)$ defined by the direct sum $\oplus_{m \in \bbZ} \Hom_\cM(a, b \otimes \cO^{\otimes m})$. Given objects $a, b, c$ and morphisms
\begin{eqnarray*}
\mathrm{f}=\{f_m\}_{m \in \bbZ} \in \oplus_{m} \Hom_\cM(a,b \otimes \cO^{\otimes m})&\mathrm{g}=\{g_m\}_{m \in \bbZ} \in \oplus_{m} \Hom_\cM(b,c \otimes \cO^{\otimes m})\,, &
\end{eqnarray*}
the $j^{\mathrm{th}}$-component of $\mathrm{g}\circ \mathrm{f}$ is defined as $\sum_m (g_{j -m} \otimes \cO^{\otimes m})\circ f_m$. The functor $\pi\colon \cM \to \cM/_{\!-\otimes \cO}$ defined by $a \mapsto a$ and $f \mapsto \mathrm{f}=\{f_m\}_{m \in \bbZ}$, where $f_0=f$ and $f_m=0$ if $m\neq 0$, is endowed with an isomorphism $\pi \circ (-\otimes \cO) \Rightarrow \pi$ and is $2$-universal among all such functors. The category $\cM/_{\!-\otimes \cO}$ is $R$-linear, additive, and inherits from $\cM$ a symmetric monoidal structure making $\pi$ symmetric monoidal.
\section{Proof of Theorem \ref{thm:main}}\label{sec:proof}
Let $\mathrm{SH}(k)$ be the Morel-Voevodsky's stable $\bbA^1$-homotopy category of $(\bbP^1,\infty)$-spectra \cite{MV,Voevodsky-ICM} and $\mathrm{DA}(k;R)$ the $R$-coefficients variant introduced in \cite[\S4]{Ayoub}. Recall from {\em loc. cit.} that these categories are related by a triangulated $\otimes$-functor $(-)_R\colon \mathrm{SH}(k) \to \mathrm{DA}(k;R)$. As proved in \cite{RSO}, the $E^\infty$-ring spectrum $\mathrm{KGL} \in \mathrm{SH}(k)$ representing homotopy $K$-theory admits a strictly commutative model. Therefore, we can consider the closed symmetric monoidal Quillen model category $\mathrm{Mod}(\mathrm{KGL}_R)$ of $\mathrm{KGL}_R$-modules. Let us denote by $\Ho(\mathrm{Mod}(\mathrm{KGL}_R))$ the associated homotopy category. By construction, we have the following composition 
\begin{equation}\label{eq:composition-last}
\mathrm{Sm}(k) \stackrel{\Sigma^\infty(-_+)}{\too} \mathrm{DA}(k;R) \stackrel{-\wedge \mathrm{KGL}_R}{\too} \Ho(\mathrm{Mod}(\mathrm{KGL}_R))\,.
\end{equation}
\begin{lemma}\label{lem:new}
\begin{itemize}
\item[(i)] The triangulated category $\Ho(\mathrm{Mod}(\mathrm{KGL}_R))$ is compactly generated by the objects $\Sigma^\infty(Y_+)_R \wedge \mathrm{KGL}_R$ with $Y$ a smooth projective $k$-scheme. Moreover, these latter objects are strongly dualizable and self-dual.  
\item[(ii)] The objects $\Sigma^\infty(X_+)_R\wedge \mathrm{KGL}_R$, with $X \in \mathrm{Sm}(k)$, are strongly dualizable.
\end{itemize}
\end{lemma}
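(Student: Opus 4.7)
The plan is to establish (i) first and to deduce (ii) from it. For the compact generation assertion in (i), I would start from the standard fact that $\mathrm{SH}(k)$, and hence $\mathrm{DA}(k;R)$, is compactly generated by the bigraded suspensions $\Sigma^{p,q}\Sigma^\infty(X_+)$ with $X \in \mathrm{Sm}(k)$ and $p,q \in \bbZ$. Smashing with the strictly commutative ring spectrum $\mathrm{KGL}_R$ transports compact generation to the objects $\Sigma^{p,q}(\Sigma^\infty(X_+)_R \wedge \mathrm{KGL}_R)$ in $\Ho(\mathrm{Mod}(\mathrm{KGL}_R))$. Bott periodicity $\Sigma^{2,1}\mathrm{KGL}_R \simeq \mathrm{KGL}_R$ then collapses the motivic bigrading to a single $\bbZ$-grading, so one may drop the bigraded suspensions. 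To finally restrict to the smooth \emph{projective} case, I would invoke resolution of singularities (Hironaka in characteristic zero) or de Jong's alterations (using the $\bbQ$-linearity of $R$ to divide out generic degrees of the alterations), combined with the cdh-descent property of $\mathrm{KGL}$-theory; these tools let one write each $\Sigma^\infty(X_+)_R\wedge\mathrm{KGL}_R$ as a finite homotopy colimit of analogous objects attached to smooth projective varieties, whence the thick subcategory generated by the latter already contains the former.

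For the self-duality clause of (i), I would apply Atiyah duality in $\mathrm{SH}(k)$: for $Y$ smooth projective of dimension $d$, the Spanier--Whitehead dual of $\Sigma^\infty(Y_+)$ is the Thom spectrum $\mathrm{Th}(-T_Y)$ of the virtual tangent bundle. Because $\mathrm{KGL}_R$ is oriented (indeed $GL$-oriented in the sense of Panin--Smirnov), the corresponding Thom isomorphism yields $\mathrm{Th}(-T_Y)\wedge\mathrm{KGL}_R \simeq \Sigma^{-2d,-d}(\Sigma^\infty(Y_+)_R\wedge\mathrm{KGL}_R)$. A further application of Bott periodicity absorbs this bigraded suspension, producing the desired isomorphism $(\Sigma^\infty(Y_+)_R\wedge\mathrm{KGL}_R)^\vee \simeq \Sigma^\infty(Y_+)_R\wedge\mathrm{KGL}_R$, i.e.\ self-duality.

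For (ii), I would exploit the standard fact that, in a compactly generated tensor triangulated category whose chosen compact generators are strongly dualizable, the full subcategory of strongly dualizable objects coincides with the full subcategory of compact objects: both equal the thick subcategory generated by the given generators. Part (i) supplies such a set of generators, while the object $\Sigma^\infty(X_+)_R\wedge\mathrm{KGL}_R$ is compact for every $X \in \mathrm{Sm}(k)$ (this is inherited from the analogous compactness in $\mathrm{SH}(k)$). Strong dualizability of these objects then follows formally. The main obstacle I expect lies in the reduction from smooth to smooth projective within the compact-generation step: it requires a careful combination of alterations/resolution with the cdh-descent properties of $K$-theory, together with verifying that finite homotopy colimits indeed suffice in $\Ho(\mathrm{Mod}(\mathrm{KGL}_R))$. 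The remaining ingredients, namely Bott periodicity, $GL$-orientability, and the abstract behaviour of compactly generated tensor triangulated categories, are essentially off-the-shelf.
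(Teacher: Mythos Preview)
Your proposal is correct and follows the same overall architecture as the paper, but the paper outsources the two substantive steps in (i) to the literature while you sketch their proofs. For compact generation by smooth projectives, the paper simply cites Ayoub's result that $\mathrm{DA}(k;R)$ itself is already compactly generated by $\Sigma^\infty(Y_+)_R(m)$ with $Y$ smooth projective and $m\in\bbZ$, and then applies Bott periodicity after base-change to $\mathrm{KGL}_R$; you instead pass to $\mathrm{KGL}_R$-modules first and carry out the smooth-to-projective reduction there via alterations and cdh-descent of $K$-theory. Both routes work, but note that Ayoub's theorem gives the stronger statement at the $\mathrm{DA}$ level and makes the $\mathrm{KGL}$-specific cdh-descent unnecessary. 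For self-duality, the paper again cites a lemma from an earlier paper of the author, whereas your Atiyah duality plus Thom isomorphism plus periodicity argument is precisely what lies behind that citation. Your treatment of (ii) matches the paper's exactly: strongly dualizable objects form a thick subcategory, and the compact objects coincide with the thick subcategory generated by the (dualizable) generators from (i).
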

\begin{proof}
(i) As proved in \cite[Prop.~2.2.27-2]{Ayoub}, the triangulated category $\mathrm{DA}(k;R)$ is compactly generated by the objects $\Sigma^\infty(Y_+)_R(m)$ with $Y$ a smooth projective $k$-scheme and $m \in \bbZ$. Thanks to the periodicity isomorphism $\mathrm{KGL}_R \simeq \mathrm{KGL}_R(1)[2]$, we then conclude that the category $\Ho(\Mod(\mathrm{KGL}_R))$ is compactly generated by the objects $\Sigma^\infty(Y_+) \wedge \mathrm{KGL}_R$. The fact that these latter objects are strongly dualizable and self-dual is proved in \cite[Lem.~8.22]{Bridge}.

(ii) It is well-known that the strongly dualizable objects of a closed symmetric monoidal triangulated category are stable under distinguished triangles and direct summands. Therefore, since the objects $\Sigma^\infty(X_+)_R \wedge \mathrm{KGL}_R$, with $X \in \mathrm{Sm}(k)$, are compact, the proof follows now from item (i).
\end{proof}
Recall from \cite[\S2]{A1-homotopy}\cite[\S4]{Hopf} the construction of the category of noncommutative mixed motives $\mathrm{Mot}(k;R)$. By construction, this closed symmetric monoidal triangulated comes equipped with a $\otimes$-functor $U(-)_R\colon \Hmo(k) \to \mathrm{Mot}(k;R)$. Moreover, it is naturally enriched over the derived category $\cD(R)$; we denote this enrichment by $\uHom_{\cD(R)}(-,-)$. Given dg categories $\cA$ and $\cB$, with $\cA$ smooth and proper, recall from \cite[Prop.~4.4]{Hopf} that we have a natural isomorphism
\begin{equation}\label{eq:K-spectra}
\uHom_{\cD(R)}(U(\cA)_R,U(\cB)_R) \simeq KH(\cA^\op \otimes \cB) \wedge HR\,,
\end{equation}
where $KH(\cA^\op \otimes \cB)$ stands for the homotopy $K$-theory spectrum of $\cA^\op \otimes \cB$ and $HR$ for the Eilenberg-MacLane ring spectrum of $R$.
\begin{proposition}\label{prop:very-last}
There exists an $R$-linear fully-faithful triangulated $\otimes$-functor $\Phi$ making the following diagram commute
\begin{equation}\label{eq:diagram-1}
\xymatrix{
\mathrm{Sm}(k) \ar[d]_-{\eqref{eq:composition-last}} \ar[rrr]^-{X \mapsto \perf_\dg(X)} &&& \mathrm{Hmo}(k) \ar[dd]^-{U(-)_R} \\
\Ho(\mathrm{Mod}(\mathrm{KGL}_R)) \ar[d]_-{\uHom(-,\mathrm{KGL}_R)}&&& \\
\Ho(\mathrm{Mod}(\mathrm{KGL}_R)) \ar[rrr]_-{\Phi} &&& \mathrm{Mot}(k;R)\,,
}
\end{equation}
where $\uHom(-,-)$ stands for the internal Hom. The functor $\Phi$ preserves moreover arbitrary direct sums.
\end{proposition}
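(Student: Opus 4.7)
My plan is to construct $\Phi$ by specifying it on a set of compact generators, to verify fully-faithfulness by a $K$-theoretic comparison of mapping spectra, and then to extend to all objects by ind-completion. Lemma \ref{lem:new}(i) supplies compact, self-dual generators $\{\Sigma^\infty(Y_+)_R \wedge \mathrm{KGL}_R\}$ of $\Ho(\Mod(\mathrm{KGL}_R))$ indexed by smooth projective $k$-schemes $Y$, and $\mathrm{Mot}(k;R)$ is correspondingly compactly generated by the noncommutative motives $U(\perf_\dg(Y))_R$ for such $Y$. Accordingly, I set $\Phi(\Sigma^\infty(Y_+)_R \wedge \mathrm{KGL}_R) := U(\perf_\dg(Y))_R$ on this generating collection, with the intention of extending by colimits.

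The essential computation is to compare the two Hom spectra on generators. On the noncommutative-motives side, the identification \eqref{eq:K-spectra} combined with the Morita equivalence $\perf_\dg(Y_1)^\op \otimes \perf_\dg(Y_2) \simeq \perf_\dg(Y_1 \times Y_2)$ (available for $Y_1, Y_2$ smooth projective) gives a Hom spectrum isomorphic to $KH(Y_1 \times Y_2) \wedge HR$. On the $\mathrm{KGL}_R$-module side, the self-duality from Lemma \ref{lem:new}(i), together with the fact that $\mathrm{KGL}_R$ represents homotopy $K$-theory with $R$-coefficients, identifies the same Hom spectrum with $\pi_\ast(\Sigma^\infty((Y_1 \times Y_2)_+)_R \wedge \mathrm{KGL}_R) = KH(Y_1 \times Y_2) \wedge HR$. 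Naturality of these identifications provides fully-faithfulness on generators; symmetric monoidality follows from the matching of $\perf_\dg(Y_1 \times Y_2) \simeq \perf_\dg(Y_1) \otimes \perf_\dg(Y_2)$ with $\Sigma^\infty((Y_1 \times Y_2)_+) \simeq \Sigma^\infty(Y_{1+}) \wedge \Sigma^\infty(Y_{2+})$. Since both triangulated categories are compactly generated, $\Phi$ extends uniquely to an $R$-linear colimit-preserving triangulated $\otimes$-functor, fully-faithful on all objects.

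For commutativity of \eqref{eq:diagram-1}: when $Y$ is smooth projective, self-duality forces $\uHom(\Sigma^\infty(Y_+)_R \wedge \mathrm{KGL}_R, \mathrm{KGL}_R) \simeq \Sigma^\infty(Y_+)_R \wedge \mathrm{KGL}_R$, which $\Phi$ carries precisely to $U(\perf_\dg(Y))_R$, matching the upper composition $Y \mapsto \perf_\dg(Y) \mapsto U(\perf_\dg(Y))_R$. For a general smooth $X \in \mathrm{Sm}(k)$, the object $\Sigma^\infty(X_+)_R \wedge \mathrm{KGL}_R$ is merely strongly dualizable by Lemma \ref{lem:new}(ii). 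To handle this case, I would choose a smooth projective compactification $\overline{X} \supset X$ with smooth closed complement $Z$ (or reduce to this situation via a stratification) and compare the Gysin/localization triangle for the pair $(Z, X)$ on the $\mathrm{KGL}_R$-module side with the semi-orthogonal decomposition $\perf_\dg(\overline{X}) = \langle \perf_\dg(\overline{X})_Z, \perf_\dg(X)\rangle$ on the noncommutative-motives side, concluding by induction on dimension.

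The hard part will be this final step: verifying that the dual of $\Sigma^\infty(X_+)_R \wedge \mathrm{KGL}_R$ is sent by $\Phi$ to $U(\perf_\dg(X))_R$ in the non-projective case. This demands a precise compatibility between $\mathrm{KGL}$-oriented Atiyah duality (which absorbs the Thom twist along the normal bundle of $Z \hookrightarrow \overline{X}$, thanks to the orientation of $\mathrm{KGL}$) and the Morita duality $\cA \mapsto \cA^\op$ on dg categories, expressed as a matching of localization sequences on both sides. Once this compatibility is established, the remaining verification of the diagram is formal.
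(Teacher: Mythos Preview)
Your approach is genuinely different from the paper's, and it has a real gap in the construction of $\Phi$ itself. You propose to \emph{define} $\Phi$ on the generators $\Sigma^\infty(Y_+)_R \wedge \mathrm{KGL}_R$ and then ``extend by colimits''. But a triangulated functor is not determined by its values on a generating set of objects together with a bijection on Hom-groups: you need coherent data on morphisms, compositions, and distinguished triangles, which at the bare triangulated level simply does not lift from generators. Even working $\infty$-categorically, matching mapping spectra does not produce a functor; it only tells you that a functor, \emph{if it exists}, is fully faithful. The phrases ``naturality of these identifications provides fully-faithfulness'' and ``symmetric monoidality follows from the matching'' both hide the same missing ingredient: you have checked that a putative $\Phi$ would have the right effect, but you have not actually built it. (A minor point: $\pi_\ast(\Sigma^\infty((Y_1\times Y_2)_+)_R\wedge\mathrm{KGL}_R)$ is a graded group, not a spectrum; you mean the mapping spectrum itself.) Your treatment of the non-projective case is also incomplete, as you acknowledge: a smooth compactification with \emph{smooth} closed complement rarely exists, the localization sequence $\perf_\dg(\overline{X})_Z \to \perf_\dg(\overline{X}) \to \perf_\dg(X)$ is a Verdier quotient and not in general a semi-orthogonal decomposition, and the ``precise compatibility'' you defer is exactly the content of the proposition.

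The paper sidesteps both issues. It does not construct $\Phi$ from scratch: it imports $\Phi$ from \cite[Cor.~2.5(ii)]{Bridge}, where it is built at the model-categorical level and already known to be fully faithful, $\otimes$-triangulated, and direct-sum preserving. The cited diagram, however, has the internal Hom on the \emph{noncommutative} side, namely $\uHom(-,U(k)_R)$ applied after $U(-)_R$. The work in the paper is then purely formal: using the $\Mot(k;R)$-enriched categories $\mathrm{DA}(k;\Mot(k;R))$ and $\Ho(\Mod(\mathrm{KGL}_{\mathrm{nc};R}))$, one writes both compositions of diagram~\eqref{eq:diagram-1} as the same expression $\uHom_{\Mot(k;R)}(\mathrm{KGL}_{\mathrm{nc};R}, \uHom(\Sigma^\infty(X_+)\otimes\mathrm{KGL}_{\mathrm{nc};R}, \mathrm{KGL}_{\mathrm{nc};R}))$, invoking the strong dualizability of $\Sigma^\infty(X_+)_R\wedge\mathrm{KGL}_R$ from Lemma~\ref{lem:new}(ii) to pass the internal Hom across the monoidal base-change. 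This handles all smooth $X$ uniformly, with no compactification or induction needed.
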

\begin{proof}
As proved in \cite[Cor.~2.5(ii)]{Bridge}, there exists an $R$-linear fully-faithful triangulated $\otimes$-functor $\Phi$ making the following diagram commute:
$$
\xymatrix{
\mathrm{Sm}(k) \ar[d]_-{\Sigma^\infty(-_+)_R} \ar[rrr]^-{X \mapsto \perf_\dg(X)} &&& \mathrm{Hmo}(k) \ar[d]^-{U(-)_R} \\
\mathrm{DA}(k;R) \ar[d]_-{-\wedge \mathrm{KGL}_R} &&&  \mathrm{Mot}(k;R) \ar[d]^-{\uHom(-,U(k)_R)}\\
\Ho(\mathrm{Mod}(\mathrm{KGL}_R)) \ar[rrr]_-{\Phi} &&& \mathrm{Mot}(k;R)\,.
}
$$
The functor $\Phi$ preserves moreover arbitrary direct sums. The proof will consist on ``moving'' the internal Hom from the right hand-side to the left-hand side.

Recall from \cite[\S7-8]{Bridge} the construction of the closed symmetric monoidal triangulated category $\mathrm{DA}(k;\Mot(k;R))$, of the $E^\infty$-object in $\mathrm{KGL}_{\mathrm{nc};R}$ in $\mathrm{DA}(k;\Mot(k;R))$, which admits a strictly commutative model, of the closed symmetric monoidal Quillen model category $\mathrm{Mod}(\mathrm{KGL}_{\mathrm{nc};R})$ of $\mathrm{KGL}_{\mathrm{nc};R}$-modules, and of the homotopy category $\Ho(\mathrm{Mod}(\mathrm{KGL}_{\mathrm{nc};R}))$. By construction, we have an adjunction of categories
\begin{equation}\label{eq:adjunction-last-last}
\xymatrix{
\Ho(\mathrm{Mod}(\mathrm{KGL}_{\mathrm{nc};R})) \ar@<1ex>[d]^-{\mathrm{forget}} \\
\mathrm{DA}(k;\Mot(k;R)) \ar@<1ex>[u]^-{-\otimes \mathrm{KGL}_{\mathrm{nc};R}} \,.
}
\end{equation}
Recall also from \cite[page 535]{Voevodsky} that we have also a commutative diagram
\begin{equation}\label{eq:com-diagram}
\xymatrix{
\mathrm{DA}(k;\Mot(k;R)) \ar[rr]^-{-\otimes \mathrm{KGL}_{\mathrm{nc};R}} && \Ho(\mathrm{Mod}(\mathrm{KGL}_{\mathrm{nc};R})) \\
\mathrm{DA}(k;R) \ar[rr]_-{-\wedge \mathrm{KGL}_R} \ar[u] && \Ho(\mathrm{Mod}(\mathrm{KGL}_R)) \ar[u] \,,
}
\end{equation}
where the vertical left hand-side functor is induced by the change of coefficients from $R$ to $\Mot(k;R)$ and the vertical right hand-side functor is obtained by left Kan extension. By construction, the categories $\mathrm{DA}(k;\Mot(k;R))$ and $\Ho(\mathrm{Mod}(\mathrm{KGL}_{\mathrm{nc};R}))$ are enriched over noncommutative mixed motives $\mathrm{Mot}(k;R)$; we denote this enrichment by $\uHom_{\mathrm{Mot}(k;R)}(-,-)$. As explained in \cite[\S8]{Bridge}, $\Phi$ is defined as the composition of the vertical right hand-side functor in \eqref{eq:com-diagram} with $\uHom_{\mathrm{Mot}(k;R)}(\mathrm{KGL}_{\mathrm{nc};R},-)$.
We now claim that the following two compositions (of the diagram \eqref{eq:diagram-1})
$$
\xymatrix{
\mathrm{Sm}(k) \ar[rr]^-{X\mapsto \perf_\dg(X)}&&  \Hmo(k) \ar[r]^-{U(-)_R} & \Mot(k;R)}
$$
$$ 
\xymatrix{
\mathrm{Sm}(k) \ar[r]^-{\eqref{eq:composition-last}} & \Ho(\Mod(\mathrm{KGL}_R)) \ar[rr]^-{\uHom(-,\mathrm{KGL}_R)} && \Ho(\Mod(\mathrm{KGL}_R)) \ar[r]^-{\Phi} & \Mot(k;R)}
$$
can be described by the same formula:
\begin{equation}\label{eq:description}
X \mapsto \uHom_{\Mot(k;R)}(\mathrm{KGL}_{\mathrm{nc};R}, \uHom(\Sigma^\infty(X_+) \otimes \mathrm{KGL}_{\mathrm{nc};R}, \mathrm{KGL}_{\mathrm{nc};R}))\,.
\end{equation}
In what concerns the first composition, we have the following isomorphisms
\begin{eqnarray}
&& \uHom_{\Mot(k;R)}(\mathrm{KGL}_{\mathrm{nc};R}, \uHom(\Sigma^\infty(X_+) \otimes \mathrm{KGL}_{\mathrm{nc};R}, \mathrm{KGL}_{\mathrm{nc};R})) \label{eq:i-0} \\
& \simeq & \uHom_{\Mot(k;R)}(\Sigma^\infty(X_+) \otimes \mathrm{KGL}_{\mathrm{nc};R}, \mathrm{KGL}_{\mathrm{nc};R})) \label{eq:i-1} \\
& \simeq & \uHom_{\Mot(k;R)}(\Sigma^\infty(X_+), \mathrm{KGL}_{\mathrm{nc};R})) \label{eq:i-2} \\
& \simeq & U(\perf_\dg(X))_R \label{eq:i-3}\,,
\end{eqnarray}
where \eqref{eq:i-1} follows from the classical $(\otimes,\uHom)$ adjunction, \eqref{eq:i-2} from the above adjunction \eqref{eq:adjunction-last-last}, and \eqref{eq:i-3} from \cite[Prop.~7.26]{Voevodsky}. In what concerns the second composition, the noncommutative mixed motive \eqref{eq:i-0} identifies with 
$$\uHom_{\Mot(k;R)}(\mathrm{KGL}_{\mathrm{nc};R}, \uHom(\Sigma^\infty(X_+) \otimes \mathrm{KGL}_R, \mathrm{KGL}_R))$$
since the object $\Sigma^\infty(X_+)_R \wedge \mathrm{KGL}_R$ is strongly dualizable (see Lemma \ref{lem:new}(ii)), the vertical right hand-side functor in \eqref{eq:com-diagram} is symmetric monoidal, and the diagram \eqref{eq:com-diagram} is commutative. This finishes the proof of Proposition \ref{prop:very-last}.
\end{proof}
Let $\mathrm{HZ} \in \mathrm{SH}(k)$ be the $E^\infty$-ring spectrum representing motivic cohomology. On the one hand, we have $\mathrm{KGL}_R \simeq \oplus_{m \in \bbZ} \mathrm{HZ}_R(m)[2m]$; see \cite{Bloch}\cite[\S6]{Riou}. On the other hand, Voevodsky's big category of mixed motives $\mathrm{DM}(k;R)$ identifies with the homotopy category $\Ho(\mathrm{Mod}(\mathrm{HZ}_R))$ of $\mathrm{HZ}_R$-modules; see \cite{RO1}. Under this identification, the Tate object $\bbT:=R(1)[2]$ corresponds to the $\mathrm{HZ}_R$-module $\mathrm{HZ}_R(1)[2]$. Since the motives $M(X)_R$, with $X \in \mathrm{Sm}(k)$, are strongly dualizable, base-change along $\mathrm{HZ}_R \to \mathrm{KGL}_R$ gives then rise to an $R$-linear triangulated $\otimes$-functor $-\wedge_{\mathrm{HZ}_R} \mathrm{KGL}_R$ making the following diagram commute:
\begin{equation}\label{eq:diagram-2}
\xymatrix{
\mathrm{Sm}(k) \ar[d]_-{M(-)_R} \ar@{=}[rrr] &&& \mathrm{Sm}(k) \ar[d]^-{\eqref{eq:composition-last}} \\
\mathrm{DM}(k;R) \ar[d]_-{\uHom(-,M(\mathrm{Spec}(k))_R)} &&& \Ho(\mathrm{Mod}(\mathrm{KGL}_R)) \ar[d]^-{\uHom(-,\mathrm{KGL}_R)} \\
\mathrm{DM}(k;R) \ar[rrr]_-{-\wedge_{\mathrm{HZ}_R}\mathrm{KGL}_R} &&& \Ho(\mathrm{Mod}(\mathrm{KGL}_R)) \,.
}
\end{equation}
We now have all the ingredients necessary for the construction of the functor $H^{\mathrm{nc}}$. 

Thanks to Lemma \ref{lem:new}, the triangulated category $\Ho(\mathrm{Mod}(\mathrm{KGL}_R))$ is compactly generated. Therefore, since the triangulated functor $\Phi$ preserves arbitrary direct sums, we conclude from \cite[Thm.~8.4.4]{Neeman} that it admits a right adjoint~$\Phi^r$.

As mentioned above, we have $\mathrm{KGL}_R \simeq \oplus_{m \in \bbZ} \mathrm{HZ}_R(m)[2m]$. Therefore, since by assumption $H$ is lax $\otimes$-functor and $H(\oplus_m \bbT^{\otimes m})\simeq \oplus_m H(\bbT)^{\otimes m}$, we have the following commutative diagram
\begin{equation}\label{eq:diagram-3}
\xymatrix{
\mathrm{DM}(k;R) \ar@{=}[r] \ar[d]_-{-\wedge_{\mathrm{HZ}_R}\mathrm{KGL}_R}  & \mathrm{DM}(k;R) \ar[d]^-\gamma  \ar[r]^-H & \cM \ar[d]^-\gamma \\
\Ho(\mathrm{Mod}(\mathrm{KGL}_R))\ar[r]_-{\iota} & \mathrm{Mod}(\oplus_m \bbT^{\otimes m}) \ar[r]_-{H'} & \mathrm{Mod}(\oplus_m H(\bbT)^{\otimes m})\,,
}
\end{equation}
where $\iota$ stands for the canonical functor and $H'$ for the $R$-linear lax $\otimes$-functor naturally associated to $H$. The searched functor $H^{\mathrm{nc}}$ can now be defined as:
\begin{equation}\label{eq:composition}
\Hmo(k) \stackrel{U(-)_R}{\to} \mathrm{Mot}(k;R) \stackrel{\Phi^r}{\to} \Ho(\mathrm{Mod}(\mathrm{KGL}_R)) \stackrel{H'\circ  \iota}{\too} \mathrm{Mod}(\oplus_m H(\bbT)^{\otimes m})\,.
\end{equation}
The commutativity of diagram \eqref{eq:diagram-0} follows now from the commutativity of diagrams \eqref{eq:diagram-1} and \eqref{eq:diagram-2}-\eqref{eq:diagram-3}, and from the fact that the functor $\Phi$ is fully-faithful. This concludes the proof of the first claim of Theorem \ref{thm:main}. 

Let us now assume that $k$ is of characteristic zero and prove the second claim of Theorem \ref{thm:main}. Recall that a cartesian square of $k$-schemes of finite type
\begin{equation}\label{eq:square}
\xymatrix{
Z\times_X V \ar[d] \ar[r] & V \ar[d]^-p \\
Z \ar[r]_-i & X
}
\end{equation}
is called an {\em abstract blow-up square} if $i$ is a closed embedding and $p$ a proper map inducing an isomorphism $p^{-1}(X\backslash Z)_{\mathrm{red}}\simeq (X\backslash Z)_{\mathrm{red}}$. Consider the composition
$$
\Gamma_1\colon \mathrm{Sch}(k)\stackrel{\eqref{eq:composition-last}}{\too} \Ho(\Mod(\mathrm{KGL}_R)) \stackrel{\uHom(-, \mathrm{KGL}_R)}{\too} \Ho(\Mod(\mathrm{KGL}_R))
$$
as well as the composition
$$ \Gamma_2\colon \mathrm{Sch}(k) \stackrel{X \mapsto \perf_\dg(X)}{\too} \Hmo(k) \stackrel{U(-)_R}{\too} \Mot(k;R) \stackrel{\Phi^r}{\too} \Ho(\mathrm{Mod}(\mathrm{KGL}_R))\,.$$
Given a $k$-scheme of finite type $X$, Hironaka's resolution of singularities (see \cite[Thm.~1]{Hironaka}) yields a finite sequence of proper maps
$$ X_r \stackrel{p_r}{\too} X_{r-1} \too \cdots \too X_j \stackrel{p_j}{\too} X_{j-1} \too \cdots \too X_1 \stackrel{p_1}{\too} X_0:=X$$
with $X_r$ smooth and $X_j$ obtained from $X_{j-1}$ by an abstract blow-up square
\begin{equation*}
\xymatrix{
Z_{j-1}\times_{X_{j-1}} X_j \ar[d] \ar[r] & X_j \ar[d]^-{p_j} \\
Z_{j-1} \ar[r] & X_{j-1}
}
\end{equation*}
with $Z_{j-1}$ smooth. Using the commutativity of diagram \eqref{eq:diagram-1}, the fully-faithfulness of the functor $\Phi$, and the fact that the $k$-schemes $X_r$, $Z_{r-1}$, and $Z_{r-1} \times_{X_{r-1}} X_r$ are smooth, we can then inductively apply Lemma \ref{lem:key} below in order to conclude that $\Gamma_1(X)\simeq \Gamma_2(X)$. Consequently, by definition of $\mathrm{H}$ and $H^{\mathrm{nc}}$, we have $\mathrm{H}(X) \simeq H^{\mathrm{nc}}(\perf_\dg(X))$ in $\mathrm{Mod}(\oplus_m H(\bbT)^{\otimes m})$. This finishes the~proof of~Theorem~\ref{thm:main}.
\begin{lemma}\label{lem:key}
The following commutative squares
\begin{equation}\label{eq:squares}
\xymatrix{
\Gamma_1(X) \ar[r]^-{\Gamma_1(p)} \ar[d]_-{\Gamma_1(i)}& \Gamma_1(V) \ar[d] & \Gamma_2(X) \ar[r]^-{\Gamma_2(p)} \ar[d]_-{\Gamma_2(i)}& \Gamma_2(V) \ar[d]  \\
\Gamma_1(Z) \ar[r] & \Gamma_1(Z\times_X V) & \Gamma_2(Z) \ar[r] & \Gamma_1(Z\times_X V)\,,
}
\end{equation}
obtained by applying $\Gamma_1$ and $\Gamma_2$ to \eqref{eq:square}, are homotopy cartesian.
\end{lemma}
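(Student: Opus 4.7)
My plan is to verify the two squares separately, reducing both to cdh descent for the motivic spectrum $\mathrm{KGL}$ in characteristic zero. For the $\Gamma_1$-square, Haesemeyer's theorem (reformulated in terms of motivic spectra by Cisinski--D\'eglise) asserts that the presheaf $X\mapsto \Sigma^\infty(X_+)_R\wedge \mathrm{KGL}_R$ on $\mathrm{Sch}(k)$ sends \eqref{eq:square} to a homotopy cocartesian square in $\Ho(\mathrm{Mod}(\mathrm{KGL}_R))$. Since this category is stable, cocartesian and cartesian squares coincide; applying the exact contravariant functor $\uHom(-,\mathrm{KGL}_R)$ then yields the required homotopy cartesian square for $\Gamma_1$.

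For the $\Gamma_2$-square, note that as a right adjoint between triangulated categories, $\Phi^r$ preserves homotopy cartesian squares, so it suffices to show that $U(\perf_\dg(-))_R\colon \mathrm{Sch}(k)\to\Mot(k;R)$ carries \eqref{eq:square} to a homotopy cartesian square. The category $\Mot(k;R)$ is compactly generated by the objects $U(\cA)_R$ with $\cA$ smooth and proper, so via the enrichment \eqref{eq:K-spectra} the problem reduces to checking that for each such $\cA$ the spectrum-valued presheaf $X\mapsto \KH(\cA^\op\otimes\perf_\dg(X))\wedge HR$ satisfies descent along \eqref{eq:square}. Using Proposition \ref{prop:very-last} and Lemma \ref{lem:new}, tensoring with a smooth proper $\cA$ on the motivic side corresponds to smashing with a strongly dualizable $\mathrm{KGL}_R$-module, and such smashing preserves homotopy cartesian squares; the result then follows from the classical $\KH$-cdh descent theorem.

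The principal obstacle is the $\Gamma_2$-side verification: the classical cdh descent theorem is a statement about $\KH$-theory of schemes, whereas here we need an $\cA$-twisted analogue. The cleanest route is to exploit the strong dualizability provided by Lemma \ref{lem:new}, together with the fully-faithfulness and direct-sum preservation of $\Phi$ established in Proposition \ref{prop:very-last}, to transport the descent across the embedding and reduce to the untwisted Haesemeyer theorem. Once both squares are known to be homotopy cartesian, the inductive step along the Hironaka tower described above the lemma goes through immediately, completing the proof of Theorem \ref{thm:main} in the non-smooth case.
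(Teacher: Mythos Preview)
Your treatment of the $\Gamma_1$-square is fine and matches the paper: cdh descent for $X\mapsto\Sigma^\infty(X_+)_R\wedge\mathrm{KGL}_R$ followed by the exact contravariant functor $\uHom(-,\mathrm{KGL}_R)$.

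The $\Gamma_2$-argument has a genuine gap. You reduce to showing the square is cartesian already in $\Mot(k;R)$, which forces you to test against \emph{all} compact generators $U(\cA)_R$ with $\cA$ smooth and proper. Your next step---``tensoring with a smooth proper $\cA$ on the motivic side corresponds to smashing with a strongly dualizable $\mathrm{KGL}_R$-module''---is not supported by Proposition~\ref{prop:very-last} or Lemma~\ref{lem:new}: those results only identify $\Phi$ of the geometric generators $\Sigma^\infty(Y_+)\wedge\mathrm{KGL}_R$ with $U(\perf_\dg(Y))_R$ for $Y$ smooth projective, and there is no reason $U(\cA)_R$ for an arbitrary smooth proper dg category should lie in the essential image of $\Phi$. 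So you cannot transport the $\cA$-twist across $\Phi$ and invoke ``classical'' $\KH$-descent.

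The paper avoids this by \emph{not} attempting cartesian-ness in $\Mot(k;R)$. It detects the $\Gamma_2$-square directly in $\Ho(\Mod(\mathrm{KGL}_R))$: by Lemma~\ref{lem:new}(i) one only needs to map out of $\Sigma^\infty(Y_+)\wedge\mathrm{KGL}_R$ with $Y$ smooth projective, and by the $(\Phi,\Phi^r)$-adjunction plus diagram~\eqref{eq:diagram-1} this becomes $\uHom_{\cD(R)}(U(\perf_\dg(Y))_R,U(\perf_\dg(-))_R)$. Now the crucial move, unavailable for general $\cA$, is the chain of Morita equivalences $\perf_\dg(Y)^\op\simeq\perf_\dg(Y)$ and $\perf_\dg(Y)\otimes\perf_\dg(X)\simeq\perf_\dg(Y\times X)$, which via \eqref{eq:K-spectra} rewrites the enriched Hom as $\KH(Y\times X)\wedge HR$. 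Since $Y\times(-)$ applied to an abstract blow-up square is again an abstract blow-up square, Haesemeyer's theorem applies literally. The missing idea in your argument is precisely this reduction to a \emph{product of schemes}, which only works once you have restricted to geometric generators.
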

\begin{proof}
As explained in \cite[\S4]{JPAA}, the functor $\Sigma^\infty(-_+)$ satisfies {\em descent along abstract blow-up squares}, \ie it sends abstract blow-up squares to homotopy cartesian squares. Moreover, the objects $\Sigma^\infty(X_+)$, with $X \in \mathrm{Sch}(k)$, are compact in $\mathrm{SH}(k)$. Making use of Lemma \ref{lem:new}, we then conclude that the objects $\Sigma^\infty(X_+)_R \wedge \mathrm{KGL}_R$, with $X \in \mathrm{Sch}(k)$, are strongly dualizable in $\Ho(\Mod(\mathrm{KGL}_R))$. This implies that the left-hand side square in \eqref{eq:squares} is homotopy cartesian. 

Given $k$-schemes of finite type $X$ and $Y$, with $Y$ moreover smooth projective, we have natural isomorphisms
\begin{eqnarray}
&& \uHom_{\cD(R)}(U(\perf_\dg(Y))_R, U(\perf_\dg(X))_R) \nonumber\\
& \simeq & KH(\perf_\dg(Y)^\op \otimes \perf_\dg(X))\wedge HR \label{eq:star11}  \\
& \simeq  & KH(\perf_\dg(Y) \otimes \perf_\dg(X))\wedge HR \label{eq:star22} \\
& \simeq  & KH(Y \times X)\wedge HR  \label{eq:star33}\,,
\end{eqnarray}
where \eqref{eq:star11} is a particular case of \eqref{eq:K-spectra}, \eqref{eq:star22} follows from the derived Morita equivalence $\perf_\dg(Y)^\op \simeq \perf_\dg(Y), \cG \mapsto \uHom(\cG,\cO_Y)$, and \eqref{eq:star33} from the derived Morita equivalence $\perf_\dg(Y) \otimes \perf_\dg(X)\simeq \perf_\dg(Y \times X), (\cG,\cH) \mapsto \cG \boxtimes \cH$, proved in \cite[Lem.~4.26]{Gysin}. By applying the functor $\uHom_{\cD(R)}(U(\perf_\dg(Y))_R,-)$ to
$$
\xymatrix{
\perf_\dg(X) \ar[d]_-{i^\ast} \ar[r]^-{p^\ast} & \perf_\dg(V) \ar[d] \\
\perf_\dg(Z) \ar[r] & \perf_\dg(Z\times_X V) \,,
}
$$
we hence obtain the following commutative square:
\begin{equation}\label{eq:square2}
\xymatrix{
KH(Y \times X) \wedge HR \ar[r]^-{p^\ast} \ar[d]_-{i^\ast} & KH(Y \times V) \wedge HR \ar[d] \\
KH(Y \times Z) \wedge HR \ar[r] & KH(Y \times (Z\times_X V)) \wedge HR \,. 
}
\end{equation}
As proved in \cite[Thm.~3.5]{Haesemeyer}, homotopy $K$-theory satisfies descent along abstract blow-up squares. Therefore, since $-\wedge HR$ preserves homotopy (co)cartesian squares and $Y \times \eqref{eq:square}$ is also an abstract blow-up square, the preceding square \eqref{eq:square2} is homotopy cartesian. Lemma \ref{lem:new}, combined with the commutative diagram \eqref{eq:diagram-1}, allows us then to conclude that the right-hand side square in \eqref{eq:squares} is homotopy cartesian. This finishes the proof of Lemma \ref{lem:key}.
%
%
\end{proof}
\section{Proof of Theorem \ref{thm:main2}}
Note that $\oplus_m H^2(\bbP^1)^{\otimes (-m)}$ belongs to $\mathrm{Gr}_\bbZ(\mathrm{Ind}(\cC))$, that $H^2(\bbP^1)^{\otimes (-1)}$ belongs to $\mathrm{Gr}^b_\bbZ(\cC)$, and that we have the following adjunction of categories:
\begin{equation}\label{eq:adjunction}
\xymatrix{
\mathrm{Mod}(\oplus_m H^2(\bbP^1)^{\otimes (-m)}) \ar@<1ex>[d]^-{\mathrm{forget}} \\
\mathrm{Gr}_\bbZ(\mathrm{Ind}(\cC)) \ar@<1ex>[u]^-{\gamma} \,.
}
\end{equation}
Given any object $\mathrm{a}:=\{a_n\}_{n \in \bbZ}$ of $\mathrm{Gr}^b_\bbZ(\cC)$, we have a natural isomorphism 
\begin{eqnarray*}
\gamma(\mathrm{a}\otimes H^2(\bbP^1)^{\otimes (-1)})& := & a\otimes H^2(\bbP^1)^{\otimes (-1)} \otimes (\oplus_m H^2(\bbP^1)^{\otimes (-m)}) \\
& \simeq  & a \otimes (\oplus_m H^2(\bbP^1)^{\otimes (-m)}) =: \gamma(a)\,.
\end{eqnarray*}
Therefore, thanks to the universal property of orbit categories (see \S\ref{sub:orbit}), there exists an $R$-linear $\otimes$-functor $\gamma'$ making the diagram commute:
\begin{equation}\label{eq:diagram-iota}
\xymatrix{
\quad \quad \quad \quad  \quad \quad \mathrm{Gr}^b_\bbZ(\cC) \ar[d]_-\pi \subset \mathrm{Gr}_\bbZ(\mathrm{Ind}(\cC)) \ar[r]^-\gamma & \mathrm{Mod}(\oplus_m H^2(\bbP^1)^{\otimes (-m)}) \\
\mathrm{Gr}^b_\bbZ(\cC)/_{\!-\otimes H^2(\bbP^1)^{\otimes (-1)}} \ar@/_1.5pc/[ur]_-{\gamma'} &\,.
}
\end{equation}
Given objects $\mathrm{a}, \mathrm{b}$ of $\mathrm{Gr}^b_\bbZ(\cC)$, we have natural isomorphisms
\begin{eqnarray}
\Hom_{\mathrm{Mod}(\oplus_m H^2(\bbP^1)^{\otimes (-m)})}(\gamma(\mathrm{a}),\gamma(\mathrm{b})) & \stackrel{\eqref{eq:adjunction}}{\simeq} &
 \Hom_{\mathrm{Gr}_\bbZ(\mathrm{Ind}(\cC))}(\mathrm{a}, \mathrm{b} \otimes \oplus_m  H^2(\bbP^1)^{\otimes (-m)}) \nonumber \\
& \simeq & \oplus_{m \in \bbZ} \Hom_{\mathrm{Gr}^b_\bbZ(\cC)}(\mathrm{a}, \mathrm{b} \otimes H^2(\bbP^1)^{\otimes (-m)}) \label{eq:star2}\\
& = & \Hom_{\mathrm{Gr}^b_\bbZ(\cC)/_{\!-\otimes H^2(\bbP^1)^{\otimes (-1)}}}(\pi(\mathrm{a}), \pi(\mathrm{b}))  \nonumber \,,
\end{eqnarray}
where \eqref{eq:star2} follows from the fact that the functor $\mathrm{b}\otimes -$ preserves arbitrary direct sums and that the objects of $\mathrm{Gr}^b_\bbZ(\cC)$ are compact in $\mathrm{Gr}_\bbZ(\mathrm{Ind}(\cC))$. This implies that the functor $\gamma'$ in diagram \eqref{eq:diagram-iota} is moreover fully-faithful. 

Recall from the Tannakian formalism that, since $\cC$ is an $R$-linear neutral Tannakian category, $\mathrm{Gr}^b_\bbZ(\cC)$ is $\otimes$-equivalent to the $R$-linear category $\mathrm{Rep}(\mathrm{Gal}(\cC)\times \bbG_m)$ of finite dimensional continuous representations of $\mathrm{Gal}(\cC) \times \bbG_m$. The weight grading $\omega$, induced by the canonical morphism $\bbG_m \to \mathrm{Gal}(\cC) \times \bbG_m$, and the $\otimes$-invertible object $H^2(\bbP^1)^{\otimes (-1)}$ equip $\mathrm{Gr}_\bbZ^b(\cC)$ with a neutral {\em Tate triple} structure in the sense of Deligne-Milne \cite[\S5]{DM}. Therefore, as proved in \cite[Prop.~14.1]{JEMS}, the orbit category $\mathrm{Gr}^b_\bbZ(\cC)/_{\!-\otimes H^2(\bbP^1)^{\otimes (-1)}}$ becomes a neutral Tannakian category. Moreover, its Tannakian group is given by the kernel of the homomorphism 
\begin{equation}\label{eq:Galois}
\xymatrix@C=2em@R=2em{
\mathrm{Gal}(\mathrm{Gr}_\bbZ^b(\cC))= \mathrm{Gal}(\cC) \times \bbG_m \ar@{->>}[r]  &\bbG_m\,,}
\end{equation}
where the right-hand side copy of $\bbG_m$ is the Tannakian group of the smallest Tannakian subcategory of $\mathrm{Gr}_\bbZ^b(\cC)$ containing $H^2(\bbP^1)^{\otimes (-1)}$. Note that the first component of \eqref{eq:Galois} is the homomorphism $\mathrm{Gal}(\cC) \twoheadrightarrow \bbG_m$ introduced in \S\ref{sec:ex}, while the second component $\bbG_m \to \bbG_m$ is multiplication by $2$. This implies that the kernel of \eqref{eq:Galois} is equal to $\mathrm{Gal}_0(\cC) \times \mu_2$. Consequently, thanks to the Tannakian formalism, $\mathrm{Gr}^b_\bbZ(\cC)/_{\!-\otimes H^2(\bbP^1)^{\otimes (-1)}}$ is $\otimes$-equivalent to the $R$-linear category $\mathrm{Rep}(\mathrm{Gal}_0(\cC) \times \mu_2)$ of finite dimensional continuous representations of $\mathrm{Gal}_0(\cC) \times \mu_2$. Finally, under the $\otimes$-equivalences of categories between $\mathrm{Rep}(\mathrm{Gal}(\cC) \times \bbG_m)$ and $\mathrm{Rep}_\bbZ(\mathrm{Gal}(\cC))$ and $\mathrm{Rep}(\mathrm{Gal}_0(\cC) \times \mu_2)$ and $\mathrm{Rep}_{\bbZ/2}(\mathrm{Gal}_0(\cC))$, respectively, the functor $\pi$ in \eqref{eq:diagram-iota} identifies with the restriction functor \eqref{eq:restriction}. This concludes the proof of~Theorem~\ref{thm:main2}.
\section{Proof of Proposition \ref{prop:new}}
Recall from \eqref{eq:composition} the definition of the functor $H^{\mathrm{nc}}:=H'\circ \iota \circ \Phi^r \circ U(-)_R$. By construction, the functors $\Phi^r$, $H'$, and $\iota$, are additive. Therefore, since $U(-)_R$ is an additive invariant (see \cite[\S8.4.5]{book}), we conclude that $H^{\mathrm{nc}}$ is also an additive invariant. In what concerns item (i), we have an isomorphism between $U(\perf_\dg(Y)_X)_R$ and $U(\perf_\dg(X))_R$ in $\mathrm{Mot}(k;R)$. Hence, the proof follows from the definition of $H^{\mathrm{nc}}$ and from Theorem \ref{thm:main}. Item (ii) follows from the isomorphism $U(\cA[t])_R\simeq U(\cA)_R$ in $\Mot(k;R)$ (see \cite[\S2]{A1-homotopy}) and from the definition of $H^{\mathrm{nc}}$.
\section{Proof of Proposition \ref{prop:Calkin}}
As proved in \cite[Thm.~1.2]{Universal}, there is an isomorphism between $U(\Sigma(X))_R$ and $\Sigma(U(\perf_\dg(X))_R)$ in the triangulated category of noncommutative mixed motives $\Mot(k;R)$. Therefore, the proof follows from the definition of $(H^\ast)^{\mathrm{nc}}$.
\section{Proof of Theorem \ref{thm:Weyl}}
Consider the canonical dg functor $-\otimes_{\cO_X} \cD_X\colon \perf_\dg(X) \to \perf_\dg(\cD_X)$. The proof will consist on showing that the image of $U(\perf_\dg(X))_R \to U(\perf_\dg(\cD_X))_R$ under the functor $\Phi^r$ (see \S\ref{sec:proof}) is invertible. By construction of $H^{\mathrm{nc}}$, this implies that $H^{\mathrm{nc}}(\perf_\dg(\cD_X))\simeq \mathrm{H}(X)$ for every mixed realization $H$. 

Following Lemma \ref{lem:new}(i), the triangulated category $\Ho(\Mod(\mathrm{KGL}_R))$ is compactly generated by the strongly dualizable and self-dual objects $\Sigma^\infty(Y_+) \wedge \mathrm{KGL}_R$ with $Y$ a smooth projective $k$-scheme. Therefore, thanks to the commutative diagram \eqref{eq:diagram-1}, in order to show that the image of $U(\perf_\dg(X))_R \to U(\perf_\dg(\cD_X))_R$ under $\Phi^r$ is invertible, it suffices to show that the induced morphisms
$$ \uHom_{\cD(R)}(U(\perf_\dg(Y))_R, U(\perf_\dg(X))_R \too U(\perf_\dg(\cD_X))_R)$$
are invertible. As mentioned in \S\ref{sec:proof}, the preceding morphism identifies with
\begin{equation*}
KH(\perf_\dg(Y)^\op \otimes \perf_\dg(X))\wedge HR \too KH(\perf_\dg(Y)^\op \otimes \perf_\dg(\cD_X))\wedge HR\,.
\end{equation*}
Using the derived Morita equivalence $\perf_\dg(Y)^\op \simeq\perf_\dg(Y), \cG \mapsto \uHom(\cG,\cO_Y)$, we then conclude from Theorem \ref{thm:key2} below that the preceding morphism is invertible. This finishes the proof of Theorem \ref{thm:Weyl}. 
\begin{theorem}\label{thm:key2}
Given smooth $k$-schemes of finite type $X$ and $Y$, with $X$ as in Theorem \ref{thm:Weyl}, the induced morphism is invertible:
\begin{equation}\label{eq:induced-2}
KH(\perf_\dg(Y)\otimes \perf_\dg(X)) \too KH(\perf_\dg(Y) \otimes \perf_\dg(\cD_X))\,.
\end{equation}
\end{theorem}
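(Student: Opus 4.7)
I would prove the theorem by induction on the length $r$ of the filtration, using the auxiliary localizing invariant
$$F(\cA) := KH(\perf_\dg(Y) \otimes \cA),$$
which is localizing because tensoring with $\perf_\dg(Y)$ is exact and $KH$ is a localizing invariant. The goal becomes showing that the induction functor $-\otimes_{\cO_X}\cD_X\colon \perf_\dg(X) \to \perf_\dg(\cD_X)$ induces an equivalence on $F$.

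\textbf{Base case} ($r=0$, so $X$ itself is smooth affine). Set $A := \cO(X)$ and $D := \Gamma(X,\cD_X)$. The order filtration $F_\bullet D$ on the ring of differential operators has associated graded $\gr D \simeq \Sym_A \cT_X \simeq \cO(T^*X)$. Forming the Rees algebra $\tilde R := \bigoplus_n F_n D \cdot t^n \subset D[t]$ gives a flat graded $k[t]$-algebra satisfying $\tilde R/(t-1)\simeq D$ and $\tilde R/t \simeq \gr D$. Quillen's K-theoretic devissage for positively filtered rings, applied to this family, gives $KH(D)\simeq KH(\gr D)$, and then $\bbA^1$-invariance of $KH$ applied to the vector bundle $T^*X \to X$ gives $KH(\gr D) \simeq KH(A)$. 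The argument is functorial in $\perf_\dg(Y)$, establishing the base case.

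\textbf{Inductive step} ($r\geq 1$). Let $U := X\setminus X_{r-1}$, smooth affine by hypothesis. The open/closed decomposition yields compatible short exact sequences in $\Hmo(k)$,
$$\perf_\dg(X)_{X_{r-1}} \to \perf_\dg(X) \to \perf_\dg(U) \quad\text{and}\quad \perf_\dg(\cD_X)_{X_{r-1}} \to \perf_\dg(\cD_X) \to \perf_\dg(\cD_U),$$
linked by the induction functor. Applying $F$ produces a morphism of cofiber sequences; the right-hand vertical map is an $F$-equivalence by the base case applied to $U$, so it suffices to prove the corresponding statement for the left-hand supports. Iterating the open/closed localization along the sub-filtration $X_0\subset\cdots\subset X_{r-1}$, viewed as a filtration by support inside $X$, produces successive quotients that may be identified in $F$ with $F(\perf_\dg(U_j))$ on the commutative side (via $K$-theoretic excision for the smooth closed pair $U_j\hookrightarrow X\setminus X_{j-1}$) and with $F(\perf_\dg(\cD_{U_j}))$ on the $\cD$-module side (via Kashiwara's theorem for perfect $\cD$-modules supported on a smooth closed subscheme). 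The base case applied to each smooth affine stratum $U_j$ then closes the induction.

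\textbf{Main obstacle.} The heart of the proof is the base case --- specifically, establishing $KH(D)\simeq KH(\gr D)$ via the degeneration supplied by the Rees algebra. Unlike the usual $\bbA^1$-invariance $KH(A)\simeq KH(A[t])$, the $k[t]$-family $\tilde R$ is non-trivial, so one needs Quillen's K-theoretic devissage for positively filtered rings (or an equivalent bootstrap argument) to conclude, and one must check the argument persists after tensoring with $\perf_\dg(Y)$. A secondary technical challenge is the Kashiwara-type identification at the level of perfect complexes of $\cD$-modules in the inductive step.
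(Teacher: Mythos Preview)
Your strategy matches the paper's: localization along the filtration to reduce to smooth affine strata (your inductive step is the paper's Proposition~10.4, organized via supports rather than via the open complements $X\setminus X_{j-1}$, but these are equivalent), and Quillen's filtered-ring theorem for the affine case (the paper's Proposition~10.5). The paper cites Quillen's theorem in the form $K(F_0 R)\simeq K(R)$ directly, rather than passing through $\gr R$ via the Rees algebra as you do, but this is the same argument.

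There is one genuine gap. Your base case claims that Quillen's devissage ``is functorial in $\perf_\dg(Y)$,'' but Quillen's theorem is a statement about filtered \emph{rings}: it needs $B\otimes \cD_A$ to be a ring with $\gr(B\otimes \cD_A)\simeq B\otimes C$ Noetherian of finite global dimension, and then one uses regularity to pass from $K$ to $KH$. None of this makes sense for a general dg category coefficient $\perf_\dg(Y)$. The paper handles this with an explicit preliminary step (its Proposition~10.2): using Mayer--Vietoris for $KH$ and the Bondal--Van den Bergh induction principle, one reduces the first variable $Y$ to the affine case $Y=\Spec(B)$ \emph{before} invoking Quillen. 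You flag this issue in your ``main obstacle'' paragraph but do not resolve it; add the Zariski-descent reduction in $Y$ and the proof is complete.
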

The remainder of this section is devoted to the proof of Theorem \ref{thm:key2}. We will consider \eqref{eq:induced-2} as a morphism in two variables ($Y$ the first variable and $X$ the second variable) and divide the proof into three main steps: (i) reduction to the affine case in the first variable; (ii) reduction to the affine case in the second variable; (iii) proof of the affine case. We start with step (i). Given any dg functor $\cA \to \cB$, consider the induced morphism
$$ \alpha_Y\colon KH(\perf_\dg(Y)\otimes \cA) \too KH(\perf_\dg(Y) \otimes \cB)\,.$$
\begin{proposition}\label{prop:1}
If $\alpha_Y$ is invertible when $Y=\mathrm{Spec}(B)$ is a smooth affine scheme of finite type, then $\alpha_Y$ is invertible for every smooth scheme of finite type $Y$.
\end{proposition}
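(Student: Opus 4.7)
\smallskip
\noindent\emph{Proof proposal.} The strategy is a Mayer--Vietoris / Zariski descent argument in the variable $Y$, reducing everything to the hypothesized affine case. I would induct on the minimal number $n$ of affine opens in an open cover of $Y$; the base case $n=1$ is exactly the hypothesis of the proposition.

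For the inductive step, write $Y=U\cup V$ with $U$ affine and $V$ coverable by at most $n-1$ affines; since $Y$ is separated, the intersection $U\cap V$ is also coverable by at most $n-1$ affines (the trace on each affine chart of $V$ is affine). Thomason's localization theorem provides a short exact sequence of dg categories
$$\perf_\dg(Y)_{Y\setminus U}\longrightarrow \perf_\dg(Y)\longrightarrow \perf_\dg(U),$$
and Thomason's excision identifies the left-hand term with $\perf_\dg(V)_{V\setminus(U\cap V)}$, which fits into the analogous localization sequence for the pair $(V,U\cap V)$. Since $-\otimes \cA$ preserves short exact sequences of dg categories (up to derived Morita equivalence) and $KH$ is a finitary localizing invariant, applying $KH(-\otimes \cA)$ to these two sequences and comparing yields a homotopy cartesian Mayer--Vietoris square
$$
\xymatrix{
KH(\perf_\dg(Y)\otimes \cA) \ar[r] \ar[d] & KH(\perf_\dg(U)\otimes \cA) \ar[d] \\
KH(\perf_\dg(V)\otimes \cA) \ar[r] & KH(\perf_\dg(U\cap V)\otimes \cA)\,,
}
$$
and a completely analogous square with $\cA$ replaced by $\cB$. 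The dg functor $\cA\to\cB$ induces a map between these two squares; by the inductive hypothesis, the comparison maps on the three corners $U$, $V$, and $U\cap V$ are all equivalences (each of these schemes is covered by strictly fewer affines than $Y$, and $U$ itself is affine). Since both squares are homotopy cartesian, the induced map on the remaining corner is also an equivalence, i.e.\ $\alpha_Y$ is invertible, completing the induction.

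The principal technical obstacle is confirming that $-\otimes \cA$ sends the Thomason localization sequence to a short exact sequence of dg categories and that $KH$ transforms it into a fibre sequence of spectra. The first point rests on the Drinfeld--Keller--Tabuada formalism of short exact sequences in $\dgcat(k)$, which is preserved under $-\otimes\cA$ in $\Hmo(k)$; the second is the defining property of $KH$ as a finitary localizing invariant, exploited elsewhere in the paper (see e.g.\ the isomorphism~\eqref{eq:K-spectra}). Once these inputs are taken for granted, the induction is routine and the base case is precisely the hypothesis.
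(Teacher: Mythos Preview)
Your proposal is correct and follows essentially the same approach as the paper's proof: both establish a Mayer--Vietoris/Zariski descent square for $KH(\perf_\dg(-)\otimes\cA)$ by combining Thomason's localization and excision with the fact that $-\otimes\cA$ preserves short exact sequences of dg categories (Drinfeld) and that $KH$ sends such sequences to fibre sequences, and then run an induction on affine covers. The only cosmetic difference is that the paper invokes the Bondal--Van den Bergh ``induction principle'' \cite[Prop.~3.3.1]{BV} as a black box rather than spelling out the induction on the number of affine opens; your explicit version is the content of that principle. One small quibble: the isomorphism~\eqref{eq:K-spectra} is not really the relevant reference for $KH$ being localizing---the paper cites \cite[\S5.3]{A1-homotopy} for this.
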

\begin{proof}
In order to simplify the exposition, let $KH(Y;\cA):=KH(\perf_\dg(Y)\otimes \cA)$; similarly with $\cA$ replaced by $\cB$. Given a Zariski open cover $U_1 \cup U_2 = Y'$ of a smooth $k$-scheme of finite type $Y'$, let us write $U_{12}$ for the intersection $U_1 \cap U_2$. Thanks to the induction principle \cite[Prop.~3.3.1]{BV}, in order to prove Proposition \ref{prop:1} it suffices to prove the following condition: if $\alpha_{U_1}$, $\alpha_{U_2}$, and $\alpha_{U_{12}}$ are invertible, then $\alpha_{Y'}$ is also invertible. Consider the commutative diagram:
\begin{equation}\label{eq:squares-big}
\xymatrix@C=1.5em@R=1.5em{
KH(Y'; \cB) \ar[ddd] \ar[rrr] & & & KH(U_1; \cB) \ar[ddd] \\
& KH(Y';\cA) \ar[ul]_-{\alpha_{Y'}} \ar[r] \ar[d] & KH(U_1;\cA) \ar[ur]^-{\alpha_{U_1}} \ar[d] & \\
& KH(U_2;\cA) \ar[r] \ar[dl]_-{\alpha_{U_2}} & KH(U_{12};\cA) \ar[dr]^-{\alpha_{U_{12}}} & \\
KH(U_2; \cB) \ar[rrr] & & & KH(U_{12}; \cB)\,.
}
\end{equation}
We claim that the ``front'' and ``back'' squares of \eqref{eq:squares-big} are homotopy cartesian. Note that this implies the preceding condition and consequently finishes the proof of Proposition \ref{prop:1}. We will focus ourselves solely in the ``back'' square; the proof of the other case is similar. Consider the following commutative diagram
\begin{equation}\label{eq:shorts}
\xymatrix{
\perf_\dg(Y')_Z \ar[d] \ar[r] & \perf_\dg(Y') \ar[d] \ar[r] & \perf_\dg(U_1) \ar[d] \\
\perf_\dg(U_2)_Z \ar[r] & \perf_\dg(U_2) \ar[r] & \perf_\dg(U_{12})
}
\end{equation}
in $\Hmo(k)$, where $Z$ stands for the closed complement $Y'\backslash U_1= U_2\backslash U_{12}$. As explained in \cite[\S5]{TT}, both rows of \eqref{eq:shorts} are short exact sequences of dg categories (see \cite[\S4.6]{ICM-Keller}). Moreover, the induced dg functor $\perf_\dg(Y')_Z\to \perf_\dg(U_2)_Z$ is a derived Morita equivalence; see \cite[Thm.~2.6.3]{TT}. As proved in \cite[Prop.~1.6.3]{Drinfeld}, the functor $-\otimes \cA$ preserves short exact sequences of dg categories. Therefore, \eqref{eq:shorts} gives rise to the commutative diagram
\begin{equation}\label{eq:shorts-1}
\xymatrix@C=1.5em@R=2.5em{
\perf_\dg(Y')_Z\otimes \cA \ar[d]_-{\simeq} \ar[r] & \perf_\dg(Y')\otimes \cA \ar[d] \ar[r] & \perf_\dg(U_1) \otimes \cA \ar[d] \\
\perf_\dg(U_2)_Z \otimes \cA \ar[r] & \perf_\dg(U_2) \otimes \cA \ar[r] & \perf_\dg(U_{12})\otimes \cA\,,
}
\end{equation} 
where both rows are short exact sequences of dg categories and the left vertical morphism is an isomorphism. Since homotopy $K$-theory sends short exact sequences of dg categories to homotopy cofiber sequences of spectra (see \cite[\S5.3]{A1-homotopy}), we then conclude from \eqref{eq:shorts-1} that the ``back'' square of \eqref{eq:squares-big} is homotopy cartesian.
\end{proof}
\begin{remark}\label{rk:new1}
\begin{itemize}
\item[(i)] By applying Proposition \ref{prop:1} to $\perf_\dg(X) \to \perf_\dg(\cD_X)$, we conclude that it suffices to prove Theorem \ref{thm:key2} in the particular case where $Y=\mathrm{Spec}(B)$ is a smooth affine $k$-scheme of finite type.
\item[(ii)] Proposition \ref{prop:1} holds {\em mutatis mutandis} without the smoothness assumption.
\end{itemize}
\end{remark}
We address now step (ii). Given a dg category $\cA$, consider the induced morphism
$$\beta_X\colon KH(\cA\otimes \perf_\dg(X)) \too KH(\cA\otimes \perf_\dg(\cD_X))\,.$$
\begin{proposition}\label{prop:2}
If $\beta_X$ is invertible when $X=\mathrm{Spec}(A)$ is a smooth affine scheme of finite type, then $\beta_X$ is invertible for every smooth scheme as in Theorem \ref{thm:Weyl}.
\end{proposition}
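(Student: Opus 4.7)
\textbf{Plan for Proposition \ref{prop:2}.} The idea is to induct on the length $r$ of the filtration, peeling off one smooth affine stratum at a time. The base case $r=0$ is $X=X_0$ smooth affine, for which $\beta_X$ is invertible by assumption. For the inductive step, set $Z:=X_0$ and $U:=X\setminus X_0$. Since $X$ is smooth, the open subscheme $U$ is smooth, and it inherits the shorter filtration
$$ \emptyset \hookrightarrow X_1\setminus X_0 \hookrightarrow \cdots \hookrightarrow X_r\setminus X_0 = U $$
whose successive differences $(X_j\setminus X_0)\setminus (X_{j-1}\setminus X_0) = X_j\setminus X_{j-1}$ are smooth affine. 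By the induction hypothesis, $\beta_U$ is invertible, and $\beta_Z$ is invertible because $Z=X_0$ is smooth affine.

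Next I would set up localization sequences on both sides. On the commutative side, the short exact sequence of dg categories
$$ \perf_\dg(X)_Z \too \perf_\dg(X) \too \perf_\dg(U) $$
from \cite[\S5]{TT}, tensored with $\cA$ (which preserves short exact sequences by \cite[Prop.~1.6.3]{Drinfeld}) and passed through $KH$, gives a homotopy cofiber sequence. Since $Z$ is smooth closed in the smooth scheme $X$, the pushforward induces a derived Morita equivalence $\perf_\dg(Z)\simeq \perf_\dg(X)_Z$, and this remains valid after tensoring with $\cA$. On the $\cD$-module side, Kashiwara's equivalence for $\cD$-modules along the smooth closed immersion $Z\hookrightarrow X$ upgrades to a derived Morita equivalence $\perf_\dg(\cD_X)_Z\simeq \perf_\dg(\cD_Z)$, and the obvious restriction/support sequence
$$ \perf_\dg(\cD_X)_Z \too \perf_\dg(\cD_X) \too \perf_\dg(\cD_U) $$
is also short exact; tensoring with $\cA$ and applying $KH$ yields another cofiber sequence.

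The canonical dg functor $-\otimes_{\cO_X}\cD_X\colon \perf_\dg(X)\to \perf_\dg(\cD_X)$ is compatible with pullback to $U$ and with the ``supported on $Z$'' subcategories, and under the identifications above the induced map on the left-hand pieces is (naturally isomorphic to) $\beta_Z$. This produces a commutative ladder
$$
\xymatrix@C=1em{
KH(\perf_\dg(Z)\otimes\cA) \ar[d]^-{\beta_Z} \ar[r] & KH(\perf_\dg(X)\otimes\cA) \ar[d]^-{\beta_X} \ar[r] & KH(\perf_\dg(U)\otimes\cA) \ar[d]^-{\beta_U} \\
KH(\perf_\dg(\cD_Z)\otimes\cA) \ar[r] & KH(\perf_\dg(\cD_X)\otimes\cA) \ar[r] & KH(\perf_\dg(\cD_U)\otimes\cA)
}
$$
with rows homotopy cofiber sequences. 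Since $\beta_Z$ and $\beta_U$ are invertible, the five-lemma applied to the associated long exact sequences of homotopy groups forces $\beta_X$ to be invertible as well, completing the induction.

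\textbf{Main obstacle.} The crux is ensuring that Kashiwara's equivalence refines to a derived Morita equivalence at the level of dg enhancements, and that the resulting square of dg functors really is compatible with Zariski restriction to $U$ and with the commutative Morita equivalence $\perf_\dg(Z)\simeq \perf_\dg(X)_Z$. Both identifications rely critically on the smoothness of $Z$ inside the smooth $X$; without it, one could not invoke Kashiwara nor the purity-type identification on the commutative side. This is precisely why the induction is organized to peel off the \emph{smooth} stratum $X_0$ first, rather than attempting to collapse the potentially singular closed subscheme $X_{r-1}$ in one step.
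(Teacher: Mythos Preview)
Your overall d\'evissage strategy---peel off one smooth affine stratum and run a five-lemma on the resulting ladder of cofiber sequences---is exactly the paper's approach; the only organizational difference is that you induct by removing the bottom stratum $X_0$, whereas the paper does a descending induction on $j$ with $X'=X\setminus X_{j-1}$ and $Z=X_j\setminus X_{j-1}$. Both are fine.

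There is, however, a genuine gap on the commutative side. The assertion that ``the pushforward induces a derived Morita equivalence $\perf_\dg(Z)\simeq \perf_\dg(X)_Z$'' is false. Already for $X=\bbA^1$ and $Z=\{0\}$ the dg endomorphism algebra of $i_\ast\cO_Z$ in $\perf_\dg(X)$ is $k\oplus k[-1]$, not $k$, so $i_\ast$ is not fully faithful at the dg level and the two dg categories are not Morita equivalent. Hence tensoring the localization sequence $\perf_\dg(X)_Z\to\perf_\dg(X)\to\perf_\dg(U)$ with $\cA$ and applying $KH$ does \emph{not} yield a cofiber sequence whose left term is $KH(\perf_\dg(Z)\otimes\cA)$; you only get $KH(\perf_\dg(X)_Z\otimes\cA)$, and identifying this with $KH(\perf_\dg(Z)\otimes\cA)$ for arbitrary $\cA$ is precisely the nontrivial content.

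The paper fills this gap by invoking the Gysin triangle of \cite[Thm.~1.9 and Rk.~1.11(G2)]{Gysin}: for any functor that is $\bbA^1$-homotopy invariant and sends short exact sequences of dg categories to cofiber sequences (here $E(-)=KH(\cA\otimes -)$), a smooth closed pair $Z\hookrightarrow X'$ with complement $U$ yields directly a cofiber sequence $E(\perf_\dg(Z))\to E(\perf_\dg(X'))\to E(\perf_\dg(U))$. This is the correct replacement for your Morita claim. Your treatment of the $\cD$-module side via Kashiwara is fine---the paper uses the same short exact sequence, citing \cite[\S3.1.4]{GD}---and once the commutative side is repaired with the Gysin input, the ladder commutes and your induction goes through.
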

\begin{proof}
In order to simplify the exposition, let $KH(\cA;X):=KH(\cA\otimes \perf_\dg(X))$; similarly with $X$ replaced by $\cD_X$. Let $X'$ be a smooth $k$-scheme of finite type, $i\colon Z\hookrightarrow X'$ a smooth closed subscheme, and $j\colon U \hookrightarrow X'$ the open complement of $Z$. On the one hand, since homotopy $K$-theory is $\bbA^1$-homotopy invariant and sends short exact sequences of dg categories to homotopy cofiber sequences, \cite[Thm.~1.9 and Rk.~1.11(G2)]{Gysin} yields an homotopy cofiber sequence of spectra
$$KH(\cA;Z) \stackrel{i_\ast}{\too} KH(\cA;X') \stackrel{j^\ast}{\too} KH(\cA;U)\,.$$
On the other hand, we have a short exact sequence of dg categories (see \cite[\S3.1.4]{GD}):
$$ \perf_\dg(\cD_Z) \stackrel{i_{\mathrm{dR}, \ast}}{\too} \perf_\dg(\cD_{X'}) \stackrel{j^\ast}{\too} \perf_\dg(\cD_U)\,.$$
Hence, as in the proof of Proposition \ref{prop:1}, we obtain the homotopy cofiber sequence
$$ KH(\cA;\cD_Z) \stackrel{i_{\mathrm{dR},\ast}}{\too} KH(\cA;\cD_{X'}) \stackrel{j^\ast}{\too} KH(\cA;\cD_U)\,.$$
In the particular case where $X':=X\backslash X_{j-1}$ and $Z:=X_j\backslash X_{j-1}$, the above (general) considerations lead to the following commutative diagram:
\begin{equation}\label{eq:commutative-verylast}
\xymatrix{
KH(\cA;\cD_{X_j\backslash X_{j-1}}) \ar[r]^-{i_{dR,\ast}} & KH(\cA; \cD_{X\backslash X_{j-1}}) \ar[r]^-{j^\ast} & KH(\cA; \cD_{X\backslash X_j}) \\
KH(\cA; X_j \backslash X_{j-1}) \ar[u]^-{\beta_{X_j\backslash X_{j-1}}} \ar[r]_{i_\ast} & KH(\cA;X\backslash X_{j-1}) \ar[u]_-{\beta_{X\backslash X_{j-1}}} \ar[r]_-{j^\ast} & KH(\cA;X\backslash X_j) \ar[u]_-{\beta_{X\backslash X_j}}\,.
}
\end{equation}
Making use of the commutative diagrams \eqref{eq:commutative-verylast}, of the filtration \eqref{eq:filtration}, and of the fact that $X_j \backslash X_{j-1}, 0 \leq j \leq r$, are smooth affine $k$-schemes of finite type, the proof follows now from a descending induction argument on the index $j$.
\end{proof}
\begin{remark}\label{rk:new2}
By applying Proposition \ref{prop:2} to the dg category $\perf_\dg(X)$, we conclude that it suffices to prove Theorem \ref{thm:key2} in the particular case where $X=\mathrm{Spec}(A)$ is a smooth affine $k$-scheme of finite type.
\end{remark}
Finally, we address step (iii). Let $X=\mathrm{Spec}(A)$ and $Y=\mathrm{Spec}(B)$ be two smooth affine $k$-schemes of finite type. Thanks to Remarks \ref{rk:new1} and \ref{rk:new2}, the proof of Theorem \ref{thm:key2} follows now from the following result:
\begin{proposition}\label{prop:3}
The induced morphism $KH(B\otimes A) \to KH(B \otimes \cD_A)$, where $\cD_A$ stands for the $k$-algebra of differential operators on $A$, is invertible.
\end{proposition}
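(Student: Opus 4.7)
\textbf{Plan for Proposition \ref{prop:3}.} The idea is to interpolate between $B \otimes A$ and $B \otimes \cD_A$ via the Rees algebra of the order filtration on $\cD_A$, and then invoke $\bbA^1$-homotopy invariance of homotopy $K$-theory. Let $F_\bullet \cD_A$ denote the order filtration, with $F_0 \cD_A = A$. Since $A$ is smooth of finite type over the characteristic-zero field $k$, the module of $k$-derivations $T_A$ is finitely generated projective over $A$, the PBW theorem applies, and the associated graded is canonically isomorphic to the symmetric algebra $\mathrm{Sym}_A T_A$, i.e.\ the coordinate ring of the cotangent bundle $T^*X$. Form the Rees algebra
$$ \tilde{\cD}_A \;:=\; \bigoplus_{n \geq 0} F_n \cD_A \cdot t^n \;\subset\; \cD_A[t], $$
a flat (in fact locally free) $k[t]$-algebra satisfying $\tilde{\cD}_A/(t) \simeq \mathrm{Sym}_A T_A$ and $\tilde{\cD}_A/(t-1) \simeq \cD_A$. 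The canonical map $A \hookrightarrow \cD_A$ factors through $A \hookrightarrow \tilde{\cD}_A$ followed by specialization at $t = 1$.

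Next, I would handle the ``graded side'' directly. Since $\mathrm{Sym}_A T_A$ is the coordinate ring of a vector bundle over $X = \mathrm{Spec}(A)$, iterated $\bbA^1$-homotopy invariance of $KH$ (applied Zariski-locally on $X$, and then globalized exactly as in the proof of Proposition \ref{prop:1}) yields an equivalence $KH(B\otimes A) \isotoo KH(B\otimes \mathrm{Sym}_A T_A)$. It therefore remains to produce an equivalence $KH(B \otimes \mathrm{Sym}_A T_A) \isotoo KH(B\otimes \cD_A)$ compatible with the inclusion from $KH(B\otimes A)$.

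For this last step I would exploit the fact that $\tilde{\cD}_A$ is a non-negatively graded $k[t]$-algebra carrying a $\bbG_m$-scaling action under which $t$ has weight $1$. By $\bbA^1$-homotopy invariance we have $KH(B\otimes \tilde{\cD}_A) \simeq KH(B\otimes \tilde{\cD}_A[t^{-1}])$ on the generic side, while the scaling action supplies an explicit contraction from $\tilde{\cD}_A$ to $\tilde{\cD}_A/(t) = \mathrm{Sym}_A T_A$; together these imply that the two specializations
$$ KH(B\otimes \tilde{\cD}_A) \too KH(B\otimes \tilde{\cD}_A/(t-1)) = KH(B\otimes \cD_A), $$
$$ KH(B\otimes \tilde{\cD}_A) \too KH(B\otimes \tilde{\cD}_A/(t)) = KH(B\otimes \mathrm{Sym}_A T_A) $$
are both equivalences, finishing the proof.

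\textbf{Main obstacle.} The delicate point is the last step: for a general flat $k[t]$-algebra the fibers at $t=0$ and $t=1$ need not have the same $KH$. What rescues the argument here is the combination of positive grading of $\tilde{\cD}_A$, the $\bbG_m$-equivariance of the Rees construction, and the (locally) freeness of each $F_n\cD_A$ over $A$, which together allow one to write down a homotopy (at the level of the underlying dg categories of perfect modules) realising the $\bbG_m$-contraction of the parameter $t$. Making this rigorous—most cleanly by reducing, via Proposition \ref{prop:1}, to an explicit statement about $KH$ of the pair $(\tilde{\cD}_A, \mathrm{Sym}_A T_A)$ and invoking $\bbA^1$-homotopy invariance together with the Bass-type fundamental theorem for $KH$—is the technical heart of the argument.
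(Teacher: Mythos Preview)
Your overall strategy---using the order filtration on $\cD_A$, its associated graded $\mathrm{Sym}_A T_A$, and the Rees algebra $\tilde{\cD}_A$ to interpolate---is morally the right one, but the paper proceeds far more directly. It simply observes that $B\otimes A$ and $B\otimes C$ (with $C=\mathrm{Sym}_A T_A$) are Noetherian of finite global dimension, and then invokes Quillen's filtered-ring theorem \cite[\S6 Thm.~7]{Quillen} (see also \cite{Hodge}): for a ring with an exhaustive filtration whose associated graded is regular Noetherian, the inclusion of the degree-zero part induces an isomorphism on $K$-theory. This gives $K(B\otimes A)\simeq K(B\otimes\cD_A)$ at once; regularity then identifies $K$ with $KH$. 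What you are attempting with the Rees construction is, in effect, to reprove Quillen's theorem by hand.

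There is also a genuine gap in your sketch of the ``last step.'' The assertion that $\bbA^1$-homotopy invariance yields $KH(B\otimes\tilde{\cD}_A)\simeq KH(B\otimes\tilde{\cD}_A[t^{-1}])$ is not correct as stated: $\bbA^1$-invariance gives $KH(R)\simeq KH(R[s])$ for a \emph{new} polynomial variable $s$, not invariance under inverting an existing central element $t$. Likewise, $KH(\cD_A[t,t^{-1}])$ is not isomorphic to $KH(\cD_A)$ but rather, by the Bass fundamental theorem, to $KH(\cD_A)\oplus\Sigma^{-1}KH(\cD_A)$. What \emph{does} work is the $\bbG_m$-contraction you mention, which shows that the degree-zero inclusion $A\hookrightarrow\tilde{\cD}_A$ induces an equivalence on $KH$; but to pass from $\tilde{\cD}_A$ to its fibre $\cD_A$ at $t=1$ you still need a localization/d\'evissage argument comparing the fibre sequence for $(t)$ with the one for $(t-1)$, and this is exactly the content of Quillen's theorem. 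So either cite Quillen--Hodges directly, as the paper does, or be prepared to reproduce that argument in full; the shortcut via ``$\bbA^1$-invariance on the generic side'' does not close the gap.
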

\begin{proof}
Let us denote by $T^\ast X =\mathrm{Spec}(C)$ be the cotangent bundle of $X$. Recall that we have an increasing filtration $0 = F_{-1} \cD_A \subset F_0 \cD_A \subset \cdots \subset F_j \cD_A \subset \cdots \subset \cD_A$ of $\cD_A$ given by the order of the differential operators. In particular, $F_0\cD_A=A$. This filtration is exhaustive, \ie $\cD_A=\cup_{j=-1}^\infty F_j \cD_A$, and the associated graded algebra $\mathrm{gr}(\cD_A)$ is isomorphic to $C$. Consequently, by applying the functor $B \otimes -$ to the preceding filtration we obtain an increasing exhaustive filtration 
$$ 0 = B\otimes F_{-1} \cD_A \subset B \otimes F_0 \cD_A \subset \cdots \subset B \otimes F_j\cD_A \subset \cdots \subset B \otimes \cD_A$$
of $B \otimes \cD_A$ with $F_0(B \otimes \cD_A)=B \otimes A$ and $\mathrm{gr}(B \otimes \cD_A)\simeq B \otimes C$.

Since the affine $k$-schemes $X$, $Y$, and $T^\ast X$, are smooth and of finite type, the $k$-algebras $A$, $B$, and $C$ are Noetherian and of finite global dimension. Consequently, the $k$-algebras $B\otimes A$ and $B\otimes C$ are also Noetherian and of finite global dimension. Making use of \cite[\S6 Thm.~7]{Quillen} (see also \cite[Thm.~1.1]{Hodge}), we then conclude that the morphism $K(B \otimes A) \to K(B \otimes \cD_A)$, induced by the inclusion $F_0(B\otimes \cD_A) \subset B \otimes \cD_A$, is invertible. Since the $k$-algebras $B\otimes A$ and $B \otimes \cD_A$ are regular Noetherian\footnote{The global dimension of $\cD_A$ is equal to the dimension of $X=\mathrm{Spec}(A)$.}, the proof follows now from the isomorphisms $K(B \otimes A) \simeq KH(B \otimes A)$ and $K(B \otimes \cD_A) \simeq KH(B \otimes \cD_A)$ between algebraic $K$-theory and homotopy $K$-theory.
\end{proof}
\section{Proof of Theorem \ref{thm:main4}}\label{sec:proof4}
If $\cA\simeq \cB$ in $\Hmo(k)$, then $(H^\ast_{dRB})^{\mathrm{nc}}(\cA)=(H^\ast_{dRB})^{\mathrm{nc}}(\cB)$. Therefore, item (i) follows from Definition \ref{def:periods}. Let us now prove item (ii). Recall from the proof of Theorem \ref{thm:main2} that we have the following adjunction of categories:
\begin{equation}\label{eq:adjunction-last}
\xymatrix{
\mathrm{Mod}(\oplus_m H_{dRB}^2(\bbP^1)^{\otimes (-m)}) \ar@<1ex>[d]^-{\mathrm{forget}} \\
\mathrm{Gr}_\bbZ(\mathrm{Ind}(\mathrm{Vect}(k,\bbQ))) \ar@<1ex>[u]^-{\gamma} \,.
}
\end{equation}
Recall also from \S\ref{sec:ex} that $H^\ast_{dRB}(X)$ belongs to the full subcategory $\mathrm{Gr}^b_\bbZ(\mathrm{Vect}(k,\bbQ))$. The proof of item (ii) is then a consequence of the following equalities
\begin{eqnarray}
\cP^{\mathrm{nc}}(\cA) & := & \cP^{\mathrm{nc}}(\mathrm{forget}((H^\ast_{dRB})^{\mathrm{nc}}(\cA)))\nonumber \\
& = & \cP^{\mathrm{nc}}(\mathrm{forget}(\mathrm{H}^\ast_{dRB}(X))) \label{eq:star-000} \\
& = & \cP^{\mathrm{nc}}(H^\ast_{dRB}(X)\otimes (\oplus_m H^2_{dRB}(\bbP^1)^{\otimes (-m)})) \label{eq:star-111} \\
& = & \cP^{\mathrm{nc}}(\oplus_m (H^\ast_{dRB}(X)\otimes H^2_{dRB}(\bbP^1)^{\otimes (-m)})) \label{eq:star-222} \\
& = & \cP^{\mathrm{nc}}(H^\ast_{dRB}(X)) \label{eq:star-333}\\\
& = & \phi(\cP(H^\ast_{dRB}(X))) = \phi(\cP(X))\,, \label{eq:star-444}
\end{eqnarray}
where \eqref{eq:star-000} follows from the assumption $(H^\ast_{dRB})^{\mathrm{nc}}(\cA)\simeq \mathrm{H}^\ast_{dRB}(X)$,  \eqref{eq:star-111} from~adjunction \eqref{eq:adjunction-last}, \eqref{eq:star-222} from the fact that $H^\ast_{dRB}(X)\otimes -$ preserves~arbitrary direct sums, \eqref{eq:star-333} from Lemma \ref{lem:key-last} below, and \eqref{eq:star-444} from Proposition \ref{prop:key-last-1}~below.

\begin{lemma}\label{lem:key-last}
For every object $\{(V_n,W_n,\omega_n)\}_{n \in \bbZ}$ of $\mathrm{Gr}^b_\bbZ(\mathrm{Vect}(k,\bbQ))$, we have an equality of $\bbZ/2$-graded $k$-algebras:
$$
\cP^{\mathrm{nc}}(\{(V_n,W_n,\omega_n)\}_{n \in \bbZ})= \cP^{\mathrm{nc}}(\oplus_{m \in \bbZ}(\{(V_n,W_n,\omega_n)\}_{n \in \bbZ} \otimes H^2_{dRB}(\bbP^1)^{\otimes (-m)}))\,.
$$
\end{lemma}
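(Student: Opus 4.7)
The plan is a direct unfolding of the definitions, tracking how the two effects of tensoring with $H^2_{dRB}(\bbP^1)^{\otimes (-m)}$ --- which equals the Tate twist ${\bf 1}(m) = (k,\bbQ,\cdot(2\pi i)^{-m})$ placed in cohomological degree $-2m$ --- compensate each other in the formula defining $\cP^{\mathrm{nc}}$. These two effects are (a) a shift of the $\bbZ$-grading by $-2m$ and (b) a rescaling of the comparison isomorphism by the scalar $(2\pi i)^{-m}$.

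Writing $A := \{(V_n, W_n, \omega_n)\}_{n \in \bbZ}$, the graded tensor product gives
\[
\bigl(\oplus_{m \in \bbZ}\, A \otimes H^2_{dRB}(\bbP^1)^{\otimes (-m)}\bigr)_{\ell} \;\simeq\; \oplus_{m \in \bbZ}\bigl(V_{\ell + 2m},\, W_{\ell + 2m},\, \omega_{\ell + 2m}\cdot (2\pi i)^{-m}\bigr)
\]
in every cohomological degree $\ell \in \bbZ$. I would then compute the degree-$0$ generators of $\cP^{\mathrm{nc}}$ applied to the right-hand side of the lemma. Since the period set of a direct sum is the union of those of the summands, and since scaling $\omega$ by a constant $\lambda \in \bbC$ scales all entries of its matrix representation by $\lambda$, this generating set equals
\[
\bigcup_{n \in \bbZ}\bigcup_{m \in \bbZ} P(V_{2n + 2m}, W_{2n + 2m}, \omega_{2n + 2m})\cdot (2\pi i)^{-m}\cdot (2\pi i)^{-n} \;=\; \bigcup_{s \in \bbZ} P(V_{2s}, W_{2s}, \omega_{2s})\cdot (2\pi i)^{-s},
\]
where the second equality is the change of variables $s := n + m$ (for each fixed $s$, the index $m$ ranges over all of $\bbZ$, which only duplicates elements and does not enlarge the union). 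This is exactly the degree-$0$ generating set of $\cP^{\mathrm{nc}}(A)$. The degree-$1$ case is identical after replacing $2s$ by $2s+1$ throughout.

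Since the two $\bbZ/2$-graded $k$-subalgebras of $\bbC^{2\pi i}_{\bbZ/2}$ are generated (in each of the two graded components) by the same subsets of $\bbC$, they coincide as $\bbZ/2$-graded $k$-algebras. I expect no genuine conceptual obstacle in this argument: it is the bookkeeping manifestation of the fact that in $\bbC^{2\pi i}_{\bbZ/2}$ the relation $t^2 = (2\pi i)^{-1}$ makes a cohomological shift by $2$ and a multiplication by $(2\pi i)^{-1}$ indistinguishable, which is precisely what the formula defining $\cP^{\mathrm{nc}}$ is designed to absorb. The only point deserving care is correctly fixing the conventions for the graded tensor product and for the normalization of the Tate object in the de Rham-Betti realization.
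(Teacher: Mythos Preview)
Your approach is essentially the paper's: unfold the definitions, identify $H^2_{dRB}(\bbP^1)^{\otimes(-m)}$ with $(k,\bbQ,\cdot(2\pi i)^{-m})[-2m]$, compute the degree-$\ell$ component of the big direct sum, and then reindex by $s=n+m$ to see that the generating sets for $\cP^{\mathrm{nc}}$ coincide.

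There is one inaccuracy. You assert that ``the period set of a direct sum is the union of those of the summands''. With the definition used here (the set of entries of matrix representations of $\omega$ with respect to \emph{all} bases of $V$ and $W$), this is not true: non-split bases produce entries that are $k$-linear (and $\bbQ$-linear) combinations of entries from the summands. The paper, citing Huber--M\"uller-Stach, records instead that $P$ of a finite direct sum is the sumset $+_m P(\cdots)$, i.e.\ the set of finite sums $\sum_m p_m$ with $p_m$ in the $m$-th period set. With the correct formula your change of variables still works: each degree-$0$ generator of the right-hand side is a finite sum $\sum_m p_m(2\pi i)^{-(n+m)}$ with $p_m \in P(V_{2(n+m)},W_{2(n+m)},\omega_{2(n+m)})$, hence a $k$-linear combination of degree-$0$ generators of the left-hand side; and conversely the left-hand generators already appear on the right (take all but one $p_m$ equal to $0$). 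Since $\cP^{\mathrm{nc}}$ is by definition the $k$-subalgebra generated by these sets, the two algebras agree. So your argument survives, but you should replace ``union'' by the sumset and justify the equality at the level of generated $k$-algebras rather than at the level of the generating sets themselves.
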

\begin{proof}
Let $(V,W,\omega)$ and $(V'W',\omega')$ be two objects of $\mathrm{Vect}(k,\bbQ)$. In order to simplify the exposition, let us denote by $P(V,W,\omega) + P(V',W',\omega')$ the set of complex numbers of the form $p+p'$, with $p \in P(V,W,\omega)$ and $p' \in P(V',W',\omega')$. Recall from \S\ref{sec:ex} that $H^2_{dRB}(\bbP^1)^{\otimes (-m)}$ is the shifted triple $(k,\bbQ, \cdot (2\pi i)^{-m})[-2m]$. Consequently, the $n^{\mathrm{th}}$ component of $\{(V_n,W_n,\omega_n)\}_{n \in \bbZ} \otimes H^2_{dRB}(\bbP^1)^{\otimes (-m)}$ is given by
\begin{equation}\label{eq:component-1}
(V_{n+2m},W_{n+2m}, \omega_{n+2m})\otimes (k, \bbQ, \cdot (2\pi i)^{-m})\,.
\end{equation}
Similarly, the $n^{\mathrm{th}}$ component of $\oplus_{m \in \bbZ}(\{(V_n,W_n,\omega_n)\}_{n \in \bbZ} \otimes H^2_{dRB}(\bbP^1)^{\otimes (-m)})$ is
\begin{equation}\label{eq:component-2}
\oplus_{m\in \bbZ} (V_{n+2m},W_{n+2m}, \omega_{n+2m})\otimes (k, \bbQ, \cdot (2\pi i)^{-m})\,.
\end{equation}
Note that since $\{(V_n,W_n,\omega_n)\}_{n \in \bbZ}$ belongs to $\mathrm{Gr}^b_\bbZ(\mathrm{Vect}(k,\bbQ))$, the direct sum \eqref{eq:component-2} is finite. On the one hand, we have $P(\eqref{eq:component-1})=P(V_{n+2m},W_{n+2m}, \omega_{n+2m})(2\pi i)^{-m}$. On the other hand, $P(\eqref{eq:component-2})=+_{m \in \bbZ} P(V_{n+2m},W_{n+2m}, \omega_{n+2m})(2\pi i)^{-m}$; see \cite[Prop.~9.2.4]{Huber}. Consequently, the following equalities
\begin{eqnarray*}
&& \cup_{n \in \bbZ} P(\oplus_{m \in \bbZ} (V_{2(n+m)}, W_{2(n+m)}, \omega_{2(n+m)})\otimes (k,\bbQ, \cdot (2\pi i)^{-m}))(2 \pi i)^{-n}\\
& = &  \cup_{n \in \bbZ} (+_{m \in \bbZ} P(V_{2(n+m)}, W_{2(n+m)}, \omega_{2(n+m)})(2 \pi i)^{-m})(2\pi i)^{-n}\\ 
& = &  \cup_{n \in \bbZ} (+_{m \in \bbZ} P(V_{2(n+m)}, W_{2(n+m)}, \omega_{2(n+m)})(2 \pi i)^{-(n+m)})
\end{eqnarray*}
allow us to conclude that every degree $0$, resp. degree $1$, generator of the $\bbZ/2$-graded $k$-algebra $ \cP^{\mathrm{nc}}(\oplus_{m \in \bbZ}(\{(V_n,W_n,\omega_n)\}_{n \in \bbZ} \otimes H^2_{dRB}(\bbP^1)^{\otimes (-m)}))$ is a $k$-linear combination of degree $0$, resp. degree $1$, generators of the $\bbZ/2$-graded $k$-algebra $\cP^{\mathrm{nc}}(\{(V_n,W_n,\omega_n)\}_{n \in \bbZ})$. This implies that these two algebras are the same.
\end{proof}
\begin{proposition}\label{prop:key-last-1}
For every object $\{(V_n,W_n,\omega_n)\}_{n \in \bbZ}$ of $\mathrm{Gr}^b_\bbZ(\mathrm{Vect}(k,\bbQ))$, we have an equality of $\bbZ/2$-graded $k$-algebras:
\begin{equation}\label{eq:equality-2}
\cP^{\mathrm{nc}}(\{(V_n,W_n,\omega_n)\}_{n \in \bbZ})=\phi(\cP(\{(V_n,W_n,\omega_n)\}_{n \in \bbZ}))\,.
\end{equation}
\end{proposition}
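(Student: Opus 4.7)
The plan is to prove the equality by simply unwinding the two definitions and observing that the quotient map $\phi$ sends the generating set of $\cP$ to the generating set of $\cP^{\mathrm{nc}}$.

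First, I would observe that the quotient map $\phi\colon \bbC[t,t^{-1}] \twoheadrightarrow \bbC^{2\pi i}_{\bbZ/2}$ respects the natural gradings: the relation $1-(2\pi i)t^2$ is homogeneous of even degree, so the $\bbZ$-grading on $\bbC[t,t^{-1}]$ descends to the $\bbZ/2$-grading on $\bbC^{2\pi i}_{\bbZ/2}$, and $\phi$ is a graded surjective ring homomorphism for these gradings. Since $\cP(\{(V_n,W_n,\omega_n)\}_{n \in \bbZ})$ is, by definition, the $\bbZ$-graded $k$-subalgebra of $\bbC[t,t^{-1}]$ generated in degree $n$ by $P(V_n,W_n,\omega_n)$, its image $\phi(\cP(\{(V_n,W_n,\omega_n)\}_{n \in \bbZ}))$ is the $\bbZ/2$-graded $k$-subalgebra of $\bbC^{2\pi i}_{\bbZ/2}$ generated by the images of these elements under $\phi$.

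Next, I would compute the image of the degree-$n$ generators. In $\bbC^{2\pi i}_{\bbZ/2}$ the defining relation gives $t^2 = (2\pi i)^{-1}$, hence $t^{2m} = (2\pi i)^{-m}$ and $t^{2m+1} = (2\pi i)^{-m}\,t$. Consequently, for $p \in P(V_n,W_n,\omega_n)$, the element $p\cdot t^n \in \cP$ is mapped to $p\cdot (2\pi i)^{-m}$ when $n=2m$ (contributing to degree $0$ of $\cP^{\mathrm{nc}}$) and to $p\cdot (2\pi i)^{-m}\cdot t$ when $n=2m+1$ (contributing to degree $1$). Ranging over all $m \in \bbZ$ and all $p$ in the appropriate $P(V_n,W_n,\omega_n)$, this yields exactly the two generating sets
\begin{eqnarray*}
\cup_{n \in \bbZ} P(V_{2n},W_{2n},\omega_{2n})(2\pi i)^{-n} && \mathrm{resp.}\,\, \cup_{n \in \bbZ} P(V_{2n+1},W_{2n+1},\omega_{2n+1})(2\pi i)^{-n}
\end{eqnarray*}
appearing in the definition of $\cP^{\mathrm{nc}}$.

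Finally, since both $\phi(\cP(\{(V_n,W_n,\omega_n)\}_{n \in \bbZ}))$ and $\cP^{\mathrm{nc}}(\{(V_n,W_n,\omega_n)\}_{n \in \bbZ})$ are defined as the $\bbZ/2$-graded $k$-subalgebra of $\bbC^{2\pi i}_{\bbZ/2}$ generated by these identical sets, the equality \eqref{eq:equality-2} follows. The argument involves no real obstacle, only a careful bookkeeping of the identification $t^2 \equiv (2\pi i)^{-1}$ and of the parity of the degrees; the boundedness assumption (\emph{i.e.}\ that $\{(V_n,W_n,\omega_n)\}_{n \in \bbZ}$ lies in $\mathrm{Gr}^b_\bbZ(\mathrm{Vect}(k,\bbQ))$ rather than in $\mathrm{Gr}_\bbZ(\mathrm{Ind}(\mathrm{Vect}(k,\bbQ)))$) is used only to ensure that the sets $P(V_n,W_n,\omega_n)$ are well-defined via finite-dimensional matrix representations, so no convergence issue arises.
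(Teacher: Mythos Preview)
Your proof is correct and follows essentially the same approach as the paper: both arguments compute the image under $\phi$ of the degree-$n$ generators $P(V_n,W_n,\omega_n)\cdot t^n$ using the relation $t^2=(2\pi i)^{-1}$, observe that these images coincide with the defining generators of $\cP^{\mathrm{nc}}$, and conclude by noting that two $k$-subalgebras of $\bbC^{2\pi i}_{\bbZ/2}$ with the same generating set must be equal. The paper phrases the computation via the explicit multiplication law on $\bbC_0\oplus\bbC_1$ rather than via the identity $t^{2m}=(2\pi i)^{-m}$, but this is only a cosmetic difference.
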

\begin{proof}
Recall from \S\ref{sec:periods} the definition of $\bbC_{\bbZ/2}^{2\pi i}$. Note that the underlying $\bbZ/2$-graded $\bbC$-vector space of $\bbC^{2\pi i}_{\bbZ/2}$ is $\bbC_0\oplus \bbC_1$ and that the multiplication law is given by
\begin{equation}\label{eq:law}
(\lambda_0,\lambda_1) \cdot (\lambda'_0,\lambda'_1)=(\lambda_0\lambda'_0+ \lambda_1 \lambda'_1 (2\pi i)^{-1}, \lambda_0 \lambda'_1 + \lambda_1 \lambda'_0)\,.
\end{equation}
Recall also from \S\ref{sec:periods} that the $\bbZ$-graded $k$-algebra $\cP(\{(V_n,W_n, \omega_n)\}_{n \in \bbZ})$ is generated in degree $2n$ by the elements of the set $P(V_{2n}, W_{2n}, \omega_{2n})$ and in degree $2n+1$ by the elements of the set $P(V_{2n+1}, W_{2n+1}, \omega_{2n+1})$. Via the above description \eqref{eq:law}, the image of $P(V_{2n}, W_{2n}, \omega_{2n})$ under the quotient homomorphism $\phi\colon \bbC[t,t^{-1}] \twoheadrightarrow \bbC^{2 \pi i}_{\bbZ/2}$ corresponds to the subset $P(V_{2n}, W_{2n}, \omega_{2n})(2\pi i)^{-n}\subseteq \bbC_0$. Similarly, the image of $P(V_{2n+1}, W_{2n+1}, \omega_{2n+1})$ under $\phi$ corresponds to the subset $P(V_{2n+1}, W_{2n+1}, \omega_{2n+1})(2\pi i)^{-n}\subseteq \bbC_1$. By definition of $\cP^{\mathrm{nc}}(\{(V_n, W_n, \omega_n)\}_{n \in \bbZ})$, we then obtain the searched equality \eqref{eq:equality-2}.
\end{proof}
Given two objects $(V,W,\omega)$ and $(V',W',\omega')$ of the category $\mathrm{Ind}(\mathrm{Vect}(k,\bbQ))$, the subset $P((V,W,\omega)\oplus(V',W',\omega'))\subseteq \bbC$ consists of the complex numbers of the form $p+p'$, with $p \in P(V,W,\omega)$ and $p' \in P(V',W',\omega')$. Therefore, given objects $\{(V_n,W_n,\omega_n)\}_{n \in \bbZ}$ and $\{(V'_n,W'_n,\omega'_n)\}_{n \in \bbZ}$ of $\mathrm{Gr}_\bbZ(\mathrm{Ind}(\mathrm{Vect}(k,\bbQ)))$, we observe that the $\bbZ/2$-graded $k$-algebra $\cP^{\mathrm{nc}}(\{(V_n,W_n,\omega_n)\}_{n \in \bbZ}\oplus \{(V'_n,W'_n,\omega'_n)\}_{n \in \bbZ})$ agrees with the smallest $\bbZ/2$-graded $k$-subalgebra
$$ \cP^{\mathrm{nc}}(\{(V_n,W_n,\omega_n)\}_{n \in \bbZ}) \diamond \cP^{\mathrm{nc}}(\{(V'_n,W'_n,\omega'_n)\}_{n \in \bbZ}) \subseteq \bbC^{2\pi i}_{\bbZ/2}$$
containing $\cP^{\mathrm{nc}}(\{(V_n,W_n,\omega_n)\}_{n \in \bbZ})$ and $\cP^{\mathrm{nc}}(\{(V'_n,W'_n,\omega'_n)\}_{n \in \bbZ})$.

Let $\cA, \cB \subseteq \cC$ be dg categories yielding a semi-orthogonal decomposition $\dgHo(\cC)=\langle \dgHo(\cA), \dgHo(\cB)\rangle$. Proposition \ref{prop:new} implies that $(H^\ast_{dRB})^{\mathrm{nc}}(\cC)$ is isomorphism to the direct sum $(H^\ast_{dRB})^{\mathrm{nc}}(\cA) \oplus (H^\ast_{dRB})^{\mathrm{nc}}(\cB)$. By applying $\cP^{\mathrm{nc}}(-)$, the above considerations allow us then to conclude that $\cP^{\mathrm{nc}}(\cC)=\cP^{\mathrm{nc}}(\cA) \diamond \cP^{\mathrm{nc}}(\cB)$. This proves item (iii) and finishes the proof of Theorem \ref{thm:main4}.
\begin{remark}[Periods of noncommutative mixed motives]
Recall from \S\ref{sec:proof} that the new additive invariant $H^{\mathrm{nc}}_{dRB}$ factors through the functor $U(-)_R$. Therefore, similarly to Definition \ref{def:periods}, we can define the $\bbZ/2$-graded algebra of periods $\cP^{\mathrm{nc}}(N)$ of every noncommutative mixed motive $N \in \Mot(k;R)$. Here is an example: let $k$ be a number field. Recall from \cite[\S20.3]{Andre} that the $1$-motives $K(q):=[\bbZ \stackrel{1\mapsto q}{\too} \bbG_m]$, with $q \in k^\times$, are called the {\em Kummer motives}. Following \eqref{eq:K-spectra}, the morphisms in the triangulated category $\Mot(k;\bbQ)$ from $U(k)_\bbQ$ and $U(k)_\bbQ[-1]$ are in bijection with the elements of $K_1(k)_\bbQ = k^\times \otimes \bbQ$. Therefore, we can consider the triangle:
$$ U(k)_\bbQ[-2] \too K^{\mathrm{nc}}(q) \too U(k)_\bbQ \stackrel{q}{\too} U(k)_\bbQ[-1]\,.$$
Since $K(q)$, considered as an object of $\mathrm{DM}_{\mathrm{gm}}(k;\bbQ)$, is an extension of $\bbQ(0)$ by $\bbQ(1)$, we have an isomorphism between $(H^\ast_{dRB})^{\mathrm{nc}}(K^{\mathrm{nc}}(q))$ and $\mathrm{H}_{dRB}^\ast(K(q))$. Making use of (the generalization of) Theorem \ref{thm:main4}(ii), we then conclude that $\cP^{\mathrm{nc}}(K^{\mathrm{nc}}(q))=\phi(\cP(K(q)))$. As explained in \cite[\S23.3.3]{Andre}, the transcendental number $\mathrm{log}(q)$ belongs to $\cP(K(q))_1$. Consequently, it belongs also to $\cP^{\mathrm{nc}}(K^{\mathrm{nc}}(q))_1$.
\end{remark}
\section{Proof of Theorem \ref{thm:HPD}}
By definition of $\perf(X)=\langle \bbA_0, \bbA_1(1), \ldots, \bbA_{i-1}(i-1)\rangle$, we have a chain of admissible triangulated subcategories $\bbA_{i-1}\subseteq \cdots \subseteq \bbA_1\subseteq \bbA_0$ and $\bbA_r(r):=\bbA_r \otimes \cO_X(r)$. Let $\mathfrak{a}_r$ be the right orthogonal complement to $\bbA_{r+1}$ in $\bbA_r$; these are called the {\em primitive subcategories} in \cite[\S4]{KuznetsovHPD}. Note that we have semi-orthogonal decompositions:
\begin{eqnarray}\label{eq:s-1}
\bbA_r = \langle \mathfrak{a}_r, \mathfrak{a}_{r+1}, \ldots, \mathfrak{a}_{i-1} \rangle && 0\leq r \leq i-1\,.
\end{eqnarray}
As proved in \cite[Thm.~6.3]{KuznetsovHPD}, the category $\perf(Y)$ admits a HP-dual Lefschetz decomposition $\perf(Y)=\langle \bbB_{j-1}(1-j), \bbB_{j-2}(2-j), \ldots, \bbB_0\rangle$ with respect to $\cO_Y(1)$; as above we have a chain $\bbB_{j-1} \subseteq \bbB_{j-2} \subseteq \cdots \subseteq \bbB_0$ of admissible triangulated subcategories. Moreover, the primitive subcategories $\mathfrak{b}_r$ coincide with $\mathfrak{a}_r$; in this case we have semi-orthogonal decompositions:
\begin{eqnarray}\label{eq:s-2}
\bbB_r = \langle \mathfrak{a}_0, \mathfrak{a}_1, \ldots, \mathfrak{a}_{\mathrm{dim}(V)-r-2} \rangle && 0\leq r \leq j-1\,.
\end{eqnarray}
Furthermore, there exists a triangulated $\bbC_L$ and semi-orthogonal decompositions
\begin{equation}\label{eq:semi-1}
\perf(X_L)=\langle \bbC_L, \bbA_{\mathrm{dim}(L)}(1), \ldots, \bbA_n(i-\mathrm{dim}(L))\rangle 
\end{equation}
\begin{equation}\label{eq:semi-2}
\perf(Y_L)=\langle \bbB_{j-1}(\mathrm{dim}(L^\perp)-j), \ldots, \bbB_{\mathrm{dim}(L^\perp)}(-1), \bbC_L \rangle\,.
\end{equation}
Let us denote by $\bbC_L^\dg, \bbA_r^\dg, \mathfrak{a}_r^\dg$ the dg enhancement of $\bbC_L, \bbA_r, \mathfrak{a}_r$ induced from $\perf_\dg(X_L)$. Similarly, let $\bbC_L^{\dg'}, \bbB_r^{\dg}, \mathfrak{a}_r^{\dg'}$ be the dg enhancement of $\bbC_L, \bbB_r, \mathfrak{a}_r$ induced from $\perf_\dg(Y_L)$. Since the functor $\perf(X_L) \to \bbC_L \to \perf(Y_L)$, as well as  the identification between $\mathfrak{a}_r$ and $\mathfrak{b}_r$, is of Fourier-Mukai type, we have derived Morita equivalences $\bbC_L^\dg \simeq \bbC_L^{\dg'}$ and $\mathfrak{a}_r^\dg \simeq \mathfrak{a}_r^{\dg'}$. By combining the semi-orthogonal decompositions \eqref{eq:s-1} and \eqref{eq:semi-1}, resp. \eqref{eq:s-2} and \eqref{eq:semi-2} and the equality $\mathrm{dim}(V)=\mathrm{dim}(L) + \mathrm{dim}(L^\perp)$, with Theorem \ref{thm:main4} we hence conclude that 
\begin{equation}\label{eq:conclusion-1}
\phi(\cP(X_L))= \cP^{\mathrm{nc}}(\bbC_L^\dg)\diamond \cP^{\mathrm{nc}}(\mathfrak{a}^\dg_{\mathrm{dim}(L)}) \diamond \cdots \diamond \cP^{\mathrm{nc}}(\mathfrak{a}_{i-1}^\dg)
\end{equation}
\begin{equation}\label{eq:conclusion-2}
\phi(\cP(Y_L))=\cP^{\mathrm{nc}}(\mathfrak{a}_0^\dg)\diamond \cdots \diamond \cP^{\mathrm{nc}}(\mathfrak{a}_{\mathrm{dim}(L)-2})\diamond \cP^{\mathrm{nc}}(\bbC_L^\dg)\,.
\end{equation}
On the one hand, the assumption that $\bbA_0$ is generated by exceptional objects implies that $\cP^{\mathrm{nc}}(\bbA_0^\dg)=k$. On the other hand, the semi-orthogonal decomposition \eqref{eq:s-1} (with $r=0$) implies that $\cP^{\mathrm{nc}}(\bbA_0^\dg)= \cP^{\mathrm{nc}}(\mathfrak{a}_0^\dg)\diamond \cdots \diamond \cP^{\mathrm{nc}}(\mathfrak{a}_{i-1}^\dg)$. This allows us to conclude that $\cP^{\mathrm{nc}}(\mathfrak{a}_r^\dg)=k$ for every $0 \leq r \leq i-1$. Therefore, from \eqref{eq:conclusion-1}-\eqref{eq:conclusion-2} we obtain the searched equality $\phi(\cP(X_L))=\phi(\cP(Y_L))$. This finishes the proof.

\medbreak\noindent\textbf{Acknowledgments:} The author is grateful to Joseph Ayoub for an e-mail exchange about realizations, to Guillermo Corti\~nas for a conversation about $K$-theory, to Christian Haesemeyer for explanations about $cdh$-descent, to Maxim Kontsevich for discussions about noncommutative motives and noncommutative periods, to Marc Levine for a conversation about realizations, and finally to Michel Van den Bergh for several discussions about noncommutative motives and $\cD$-modules.

\end{document}